\newtheorem{thm}{Theorem}[section]
\newtheorem{prop}[thm]{Proposition}
\newtheorem{defn}[thm]{Definition}
\newtheorem{lem}[thm]{Lemma}
\newtheorem{rem}[thm]{Remark}
\newtheorem{eg}[thm]{Example}
\newtheorem{notn}[thm]{Notation}
\newcommand{\subsubsubsection}{\@startsection{paragraph}{4}{\z@}%
 {1.0\Cvs \@plus.5\Cdp \@minus.2\Cdp}%
 {.1\Cvs \@plus.3\Cdp}%
 {\reset@font\sffamily\normalsize}
 }
\DeclareMathOperator{\id}{id}
\DeclareMathOperator{\Hom}{Hom}
\DeclareMathOperator{\Aut}{Aut}
\DeclareMathOperator{\pr}{pr}
\DeclareMathOperator{\length}{length}
\DeclareMathOperator{\Gr}{Gr}
\DeclareMathOperator{\diag}{diag}
\DeclareMathOperator{\Res}{Res}
\DeclareMathOperator{\Sh}{Sh}
\DeclareMathOperator{\inv}{inv}
\DeclareMathOperator{\Par}{Par}
\DeclareMathOperator{\GL}{GL}
\DeclareMathOperator{\GU}{GU}
\DeclareMathOperator{\oF}{F}
\newcommand{\bA}{\mathbb{A}}
\newcommand{\bC}{\mathbb{C}}
\newcommand{\bF}{\mathbb{F}}
\newcommand{\bN}{\mathbb{N}}
\newcommand{\bP}{\mathbb{P}}
\newcommand{\bQ}{\mathbb{Q}}
\newcommand{\bR}{\mathbb{R}}
\newcommand{\bS}{\mathbb{S}}
\newcommand{\bX}{\mathbb{X}}
\newcommand{\bZ}{\mathbb{Z}}
\newcommand{\bfB}{\mathbf{B}}
\newcommand{\bfG}{\mathbf{G}}
\newcommand{\bfP}{\mathbf{P}}
\newcommand{\bfS}{\mathbf{S}}
\newcommand{\bfT}{\mathbf{T}}
\newcommand{\bfV}{\mathbf{V}}
\newcommand{\bfa}{\mathbf{a}}
\newcommand{\bfb}{\mathbf{b}}
\newcommand{\bft}{\mathbf{t}}
\newcommand{\cE}{\mathcal{E}}
\newcommand{\cL}{\mathcal{L}}
\newcommand{\cN}{\mathcal{N}}
\newcommand{\cO}{\mathcal{O}}
\newcommand{\cV}{\mathcal{V}}
\newcommand{\sG}{\mathscr{G}}
\newcommand{\sP}{\mathscr{P}}
\newcommand{\sQ}{\mathscr{Q}}
\newcommand{\sS}{\mathscr{S}}
\newcommand{\sT}{\mathscr{T}}
\newcommand{\sU}{\mathscr{U}}
\newcommand{\sV}{\mathscr{V}}
\newcommand{\sW}{\mathscr{W}}
\newcommand{\sfG}{\mathsf{G}}
\newcommand{\sfV}{\mathsf{V}}
\newcommand{\sfX}{\mathsf{X}}
\newcommand{\rL}{\mathrm{L}}
\newcommand{\pf}{\mathrm{pf}}
\newcommand{\ur}{\mathrm{ur}}
\newcommand{\ol}{\overline}
\newcommand{\wh}{\widehat}
\newcommand{\xra}{\xrightarrow}
\newcommand{\relmiddle}[1]{\mathrel{}\middle#1\mathrel{}}
\newcommand{\bMV}{\mathbb{MV}}
\newcommand{\Gm}{\mathbb{G}_{\mathrm{m}}}
\newcommand{\Ga}{\mathbb{G}_{\mathrm{a}}}
\newcommand{\Af}{\mathbb{A}_{\mathrm{f}}}
\newcommand{\cf}{\textit{cf.\ }}
\begin{document}

\title
{The supersingular locus of the Shimura variety\\ of $\mathrm{GU}(2,n-2)$} 

\author{Maria Fox and Naoki Imai}

\date{}
\maketitle

\footnotetext{2020 \textit{Mathematics Subject Classification}. 
 Primary: 11G18; Secondary: 14M15} 

\begin{abstract}
We study the 
supersingular locus of a reduction at an inert prime of 
the Shimura variety attached to $\mathrm{GU}(2,n-2)$. 
More concretely, we realize irreducible components of the supersingular locus 
as closed subschemes of flag schemes over Deligne--Lusztig varieties 
defined by explicit conditions after taking perfections. 
Moreover we study the intersections of the irreducible components. 
Stratifications of Deligne--Lusztig varieties defined using powers of 
Frobenius action appear in the description of the intersections. 
\end{abstract}

\section{Introduction}
Shimura varieties play an important role in the study of number theory. 
One way to approach the arithmetic of Shimura varieties is 
to construct integral models and study their reductions. 
The geometry of the supersingular locus of the reduction of a Shimura variety is especially useful information.
One of the striking results in this direction is the study of 
the supersingular locus of the reduction of the Shimura variety of $\GU (1,n-1)$ 
at an inert prime by Vollaard--Wedhorn in \cite{VoWeGUII}, 
where they give a description of 
the supersingular locus and their intersections in terms of 
Deligne--Lusztig varieties. 
This result is crucially used in \cite{KuRaSpuniI}. 

A long standing problem since \cite{VoWeGUII} 
is to extend such a result to unitary groups of 
other signatures. 
The only result in this line is the work \cite{HoPaGU22} of Howard--Pappas on the $\GU (2,2)$-case, which relies on an exceptional isomorphism.
A source of difficulty is that 
the Shimura variety of $\GU (2,n-2)$ is 
not fully Hodge--Newton decomposable 
in the sense of \cite[Definition 3.1]{GHNFulHN} 
if $n \geq 5$. 
In such a case, 
we can not expect that 
the supersingular locus is a union of Deligne--Lusztig varieties by \cite[Theorem B]{GHNFulHN}. 

On the other hand, 
the study of the perfection of the supersingular locus is essentially reduced to a study of an affine Deligne--Lusztig variety via 
the Rapoport--Zink uniformization. 
Further, a construction of irreducible components of an 
affine Deligne--Lusztig variety under some unramified condition is given by Xiao--Zhu in \cite{XiZhCycSh}. 
In their construction, we can rephrase the source of difficulty in the following way: 
Even though the affine Deligne--Lusztig variety 
related to the Shimura variety of $\GU (2,n-2)$ 
is defined using a minuscule cocharacter, 
non-minuscule cocharacters appear in 
the construction of 
its irreducible components if $n \geq 5$. 

The objective of this paper is 
to find an explicit description of the irreducible components of the affine Deligne--Lusztig variety related to 
the Shimura variety of $\GU (2,n-2)$ in terms of 
Deligne--Lusztig varieties. 

Let $F$ be a non-archimedean local field. 
We write $L$ for the completion of the maximal unramified extension of 
$F$. 
Let $G$ be the unramified general unitary group of degree $n$ 
over $F$. 
Let $\mu$ be the cocharacter of $G$ 
corresponding to 
$z \mapsto (\mathrm{diag}(z,z,1,\ldots,1),z)$ under an 
isomorphism $G_L \simeq \GL_n \times \Gm$. 
Let $X_{\mu^*}(\varpi^{-1})$ denote the affine Deligne--Lusztig variety 
for the dual $\mu^*$ of $\mu$ and $\varpi^{-1} \in G(L)$, 
where $\varpi$ is a uniformizer of $F$ and we regard $\varpi^{-1}$ as an element of $G(L)$ by embedding it into 
the $\Gm$-component. 
We put $r=[n/2]$. 
Then $X_{\mu^*}(\varpi^{-1})$ has $r$ isomorphism classes of irreducible components, whose representatives are given by $X_{\mu^*}^{\bfb_i,x_0}(\tau_i^*)$ for $1 \leq i \leq r$ as explained in \S \ref{sec:Irr}. 
If $i=1$ or $i=n/2$, then 
$X_{\mu^*}^{\bfb_i,x_0}(\tau_i^*)$ is isomorphic to the perfection of a Deligne--Lusztig variety as shown in 
Proposition \ref{prop:nu1} and Proposition \ref{prop:nur}. 

Assume that $2 \leq i \leq [(n-1)/2]$. 
Then the action of 
a hyperspecial subgroup $J_{\tau_i}(\cO_F) \subset G(F)$ 
on $X_{\mu^*}^{\bfb_i,x_0}(\tau_i^*)$ does not factor through the finite reductive quotient $J_{\tau_i}(\cO_F/\varpi)$ unlike the cases for $i=1,n/2$. 
We construct a kind of Demazure resolution 
$X_i$ of $X_{\mu^*}^{\bfb_i,x_0}(\tau_i^*)$. 
We write $\mathring{X}_i$ and $\mathring{X}_{\mu^*}^{\bfb_i,x_0}(\tau_i^*)$ for the inverse images in 
$X_i$ and $X_{\mu^*}^{\bfb_i,x_0}(\tau_i^*)$ 
of the Schubert cell $\mathring{\Gr}_{\nu_i^*}$ of an affine Grassmannian $\Gr_{\nu_i^*}$ under natural morphisms $X_i \to X_{\mu^*}^{\bfb_i,x_0}(\tau_i^*) \to  \Gr_{\nu_i^*}$. 
Explicitly, we construct a vector bundle 
$\sV_i$ of rank $2i-1$ over the perfection $Y_i$ 
of a Deligne--Lusztig variety. 
We have a natural morphism 
\[
 \phi_1 \colon \sV_i \to \oF (\sV_i^{\vee})
\]
by a Hermitian pairing related to the unitary group $G$, where 
$\oF (\sV_i^{\vee})$ is some Frobenius twist of $\sV_i^{\vee}$ (\cf \eqref{eq:phi1def}). 
Let $\Par_{t_i}(\sG_{Y_i})$ denote the flag scheme 
parametrizing subvector bundles $\sW \subset \sV_i$ 
of rank $i-1$. 

\begin{thm}[Theorem \ref{thm:pZi}, Proposition \ref{prop:Xopen}]\label{thm:IntpYi}
The scheme 
$X_i$ is isomorphic to the closed subscheme of 
$\Par_{t_i}(\sG_{Y_i})$ 
defined by the condition 
$\phi_1 (\sW) \subset \oF (\sW^{\perp})$ 
on $\sW$. 
Further $\mathring{X}_i$ is isomorphic to  $\mathring{X}_{\mu^*}^{\bfb_i,x_0}(\tau_i^*)$. 
\end{thm}

Let us summarize the situation in the following diagram: 
\[
 \xymatrix{
 \mathring{X}_{\mu^*}^{\bfb_i,x_0}(\tau_i^*)  \ar[r]^-{\sim}  & \mathring{X}_i         \ar[d] \ar@{^{(}->}[r] \ar@{}[dr]|\square & X_i \ar[d] \ar@{^{(}->}[r] &  \Par_{t_i}(\sG_{Y_i}) \ar[d] \\ 
  &  \mathring{\Gr}_{\nu_i^*} \ar@{^{(}->}[r]  & \Gr_{\nu_i^*} & Y_i .}
\]
By Theorem \ref{thm:IntpYi} and the above diagram, $\mathring{X}_{\mu^*}^{\bfb_i,x_0}(\tau_i^*)$ is cut out in $\Par_{t_i}(\sG_{Y_i})$ by two explicit conditions: one is a closed condition in Theorem \ref{thm:IntpYi} and the other is an open condition given by $\mathring{\Gr}_{\nu_i^*} \subset \Gr_{\nu_i^*}$. 

It is important to describe $X_i$, not only $\mathring{X}_i$, in order to study the intersections of irreducible components of $X_{\mu^*}(\varpi^{-1})$, because we need to understand a closure of $\mathring{X}_{\mu^*}^{\bfb_i,x_0}(\tau_i^*)$. 
We give a description of the intersections of the irreducible components in most cases in \S \ref{sec:Int}. 
Here we state one of the results, which exhibits an interesting new phenomenon. \begin{prop}[Proposition \ref{prop:inti2}]\label{prop:int12i}
The intersection 
$X_{\mu^*}^{\bfb_1,x_0}(\tau_1^*) \cap X_{\mu^*}^{\bfb_2,x_0}(\tau_2^*)$ 
is isomorphic to  
the perfect closed subscheme of $(\bP^{n-1})^{\mathrm{pf}}$ defined by two equations 
\[
 \sum_{i=1}^n x_i^{q+1}=0, \quad 
 \sum_{i=1}^n x_i^{q^3+1}=0. 
\]
\end{prop}
The perfect closed subscheme of $(\bP^{n-1})^{\mathrm{pf}}$ in 
Proposition \ref{prop:int12i} 
is the perfection of 
a stratification of a 
Deligne--Lusztig variety 
with respect to relative positions of 
parabolic subgroups and their twists 
by the third power of the Frobenius action. 
Such an intersection did not appear in the 
preceding research in 
fully Hodge--Newton decomposable cases. 
Our study does not cover all the intersections in general 
because of some technical difficulty 
which involves the study of vanishing of a ring with explicit generators and relations, 
but it does cover all the cases if $n \leq 6$. 

In the construction of 
irreducible components of an 
affine Deligne--Lusztig variety by Xiao--Zhu, they actually first construct 
$\mathring{X}_{\mu^*}^{\bfb_i,x_0}(\tau_i^*)$, and then construct 
$X_{\mu^*}^{\bfb_i,x_0}(\tau_i^*)$ 
as a closure of $\mathring{X}_{\mu^*}^{\bfb_i,x_0}(\tau_i^*)$. In the study of the unitary case in this paper, we clarify that this step in the construction is really necessary, i.e. 
we can not construct $X_{\mu^*}^{\bfb_i,x_0}(\tau_i^*)$ directly by a fiber product that is similar to the one used to construct $\mathring{X}_{\mu^*}^{\bfb_i,x_0}(\tau_i^*)$. 
This gives a negative answer to a question of Xiao--Zhu (\cf Remark \ref{rem:quesXZ}). 

The method in this paper should work for unitary groups of other signatures since the results in \cite{XiZhCycSh} and equidimensionality of Satake cycle in \S \ref{sec:EqScyc} 
are available also for other signatures. 
On the other hand, they will be more complicated for general signatures since 
the number of isomorphism classes of irreducible components of the affine Deligne--Lusztig varieties become larger. 
In this paper, we study the perfection of the supersingular locus via affine Deligne--Lusztig varieties. However, once the geometry of the corresponding affine Deligne--Lusztig varieties is understood using Demazure resolutions, we should be able to write a similar moduli problem using $p$-divisible groups and study them without taking perfections. 
That is a subject of \cite{FHIRZG}.

We explain the contents of each section. 
In \S \ref{sec:fla}, 
we recall a terminology on relative positions in flag schemes. 
We also give some gluing constructions of reductive group schemes.  
In \S \ref{sec:StDL}, 
we recall Deligne--Lusztig varieties and their Bruhat stratifications. 
We give also a new stratification 
using twists by a power of Frobenius map. 
We study the irreducibility of the stratification 
in some unitary case. 
In \S \ref{sec:AffG}, 
we recall affine Grassmannian and Satake cycles. 
In \S \ref{sec:EqScyc}, 
we recall and generalize 
results on equidimensionality of Satake cycles in 
\cite{HaiEquconv}. 
In \S \ref{sec:AffDL}, 
we recall a construction of irreducible components of 
affine Deligne--Lusztig varieties in \cite{XiZhCycSh}. 
In \S \ref{sec:Uni}, 
we explain the setting of a unitary group and 
apply the result in \S \ref{sec:EqScyc} to the unitary case.  
In \S \ref{sec:Irr}, 
we give an explicit description of irreducible components. 
In \S \ref{sec:Int}, 
we study the intersection of irreducible components. 
In \S \ref{sec:Exa}, 
we explain the results in the $n=6$ case as an example.   
In \S \ref{sec:ShVar}, 
we explain a relation between the 
affine Deligne--Lusztig varieties and the 
supersingular loci of reductions of Shimura varieties 
in our case. 

\subsection*{Acknowledgements}
The authors would like to thank Liang Xiao and Xinwen Zhu for answering questions on their work. The authors are grateful to Ben Howard and Ryosuke Shimada for helpful comments. They thank anonymous referees for their careful reading and helpful suggestions. 
The contents of this paper grew out of a discussion at the AIM workshop 
``Geometric realizations of Jacquet--Langlands correspondences'' in 2019. 
The authors are grateful to the organizers of the workshop for the invitations. Fox was partially supported by NSF MSPRF Grant 2103150. 
This work was supported by JSPS KAKENHI Grant Number 22H00093. 

\section{Flag scheme}\label{sec:fla}
\subsection{Relative position}
Let $\sG$ be a reductive group scheme over a scheme $\sS$. 
Let $\Par (\sG)$ be the $\sS$-scheme of parabolic subgroups of $\sG$. Let 
$\mathrm{Dyn}(\sG)$ be the $\sS$-scheme of Dynkin for $\sG$ 
constructed in \cite[XXIV, 3.3]{SGA3-3}. 
\begin{rem}
If $(\sT,M,R)$ is a splitting of $\sG$ in the sense of 
\cite[XXII, D\'efinition 1.13]{SGA3-3} and 
$\Delta$ is a set of simple roots, 
then we have a canonical isomorphism 
\begin{equation}\label{eq:DynDel}
 \mathrm{Dyn}(\sG) \simeq \Delta_{\sS} . 
\end{equation}
This is stated in \cite[XXIV, 3.4 (iii)]{SGA3-3} 
choosing a pinning, 
but the isomorphism actually depends only on 
$(\sT,M,R)$ and $\Delta$. 
\end{rem}

Let $\mathrm{Oc}(\mathrm{Dyn}(\sG))$ be the $\sS$-scheme of 
sets of open and closed subschemes of 
$\mathrm{Dyn}(\sG)$ (\cf \cite[XXVI, 3.1]{SGA3-3}). 
We have a projective smooth morphism 
\[
 \mathbf{t} \colon \Par (\sG) \to \mathrm{Oc}(\mathrm{Dyn}(\sG)) 
\]
of schemes as \cite[XXVI, Th\'eor\`eme 3.3]{SGA3-3}. For $t,t' \in \mathrm{Oc}(\mathrm{Dyn}(\sG))(\sS)$, we put 
\[
 \Par_{t} (\sG) =\bft^{-1} (t) \subset \Par (\sG), \quad  
 \Par_{t,t'} (\sG) =(\bft \times \bft)^{-1} (t,t') \subset \Par (\sG) \times_{\sS} \Par (\sG). 
\]
We recall results from \cite[XXVI. 4.5.3, 4.5.4]{SGA3-3}. 
Let $\mathrm{Stand}(\sG)$ be the 
$\sS$-scheme of pairs of parabolic subgroups of $\sG$ in mutually standard positions. 
Let $\mathrm{TypeStand}(\sG)$ be the $\sS$-scheme of types of mutually standard positions in $\sG$. 
The natural morphism 
\[
 \mathbf{t}_2 \colon \mathrm{Stand}(\sG) \to  \mathrm{TypeStand}(\sG), 
\] 
which is the quotient morphism under the conjugacy action of $\sG$, is smooth. 
There is a unique morphism 
\[
 q_{\sG} \colon \mathrm{TypeStand}(\sG) \to \mathrm{Oc}(\mathrm{Dyn}(\sG)) \times_{\sS} \mathrm{Oc}(\mathrm{Dyn}(\sG)) 
\]
such that the diagram 
\[
 \xymatrix{
  \mathrm{Stand}(\sG)  \ar[rr]^{\mathbf{t}_2}  \ar[d] &     &   \mathrm{TypeStand}(\sG)         \ar[d]^{q_{\sG}} \\ 
  \Par (\sG) \times_{\sS} \Par (\sG)   \ar[rr]^-{\mathbf{t} \times \mathbf{t}}   &  &  \mathrm{Oc}(\mathrm{Dyn}(\sG)) \times_{\sS} \mathrm{Oc}(\mathrm{Dyn}(\sG))} 
\]
is commutative. 
Let $\sP$ be a parabolic subgroup scheme of $\sG$. 
Let $\Par (\sG; \sP)$ be the $\sS$-scheme of parabolic subgroups of $\sG$ in standard positions relative to $\sP$. 
Let $t \in \mathrm{Oc}(\mathrm{Dyn}(\sG))(\sS)$. 
We put 
\[
 \Par_t (\sG;\sP)=\Par (\sG;\sP) \cap \Par_t (\sG). 
\]
Then we have a morphism 
\[
 \mathbf{t}_{\sP} \colon \Par_t (\sG;\sP) \to q_{\sG}^{-1}(\mathbf{t}(\sP),t)  
\]
induced by $\bft_2$. 
For an $\sS$-scheme $\sS'$ and 
$r \in (q_{\sG}^{-1}(\mathbf{t}(\sP),t))(\sS')$, 
we define $\Par_t (\sG;\sP)_r$ by the fiber product 
\[
 \xymatrix{
 \Par_t (\sG;\sP)_r   \ar[r] \ar[d]     &   \sS'         \ar[d]^{r} \\ 
  \Par_t (\sG;\sP)  \ar[r]^-{\mathbf{t}_{\sP}}   &  q_{\sG}^{-1}(\mathbf{t}(\sP),t) .} 
\]
\begin{rem}
Let $\sQ$ be a parabolic subgroup scheme of $\sG$. 
Let $\sS'$ be an $\sS$-scheme. 
We write $\sG'$, $\sP'$, $\sQ'$ for the base change of $\sG$, $\sP$, $\sQ$ to $\sS'$. 
Assume that a maximal torus 
$\sT'$ of $\sG'$ is contained in $\sP' \cap \sQ'$. 
Then we have a natural isomorphism 
\begin{equation}\label{eq:WTS}
 W_{\sP'}(\sT') \backslash W_{\sG'}(\sT') / W_{\sQ'}(\sT') 
 \simeq q_{\sG}^{-1}(\mathbf{t}(\sP),\mathbf{t}(\sQ)) \times_{\sS} \sS' 
\end{equation}
over $\sS'$ as in \cite[XXVI. 4.5.3]{SGA3-3}, 
where $W_{\sP'}(\sT')$, $W_{\sG'}(\sT')$ and $W_{\sQ'}(\sT')$ are the Weyl groups defined as \cite[XII, 2]{SGA3-2}. 
\end{rem}

\begin{notn}
Assume that $\sG$ is split and $\sS$ is connected. 
Let $(\sT,M,R)$ be a splitting of $\sG$ 
and $\Delta$ be a set of simple roots. 
Let $(W,S)$ be the Coxeter system of $(M,R,\Delta)$. 
For $I \subset S$, let 
$W_I$ be the subgroup of $W$ generated by $I$, 
and 
let $t(I)$ be the element of 
$\mathrm{Oc}(\mathrm{Dyn}(\sG))(\sS)$ 
corresponding to $I$ under \eqref{eq:DynDel}. 
Conversely, 
let $I(t)$ be the subset of $S$ 
corresponding to $t$ under \eqref{eq:DynDel} 
for $t \in \mathrm{Oc}(\mathrm{Dyn}(\sG))(\sS)$. 
We simply write 
$W_t$ for $W_{I(t)}$. 
\end{notn}

\subsection{Inner gluing}
\begin{defn}
Let $\sG_0$ be a reductive group scheme over a scheme $\sS_0$. 
Let $\sS$ be a scheme over $\sS_0$. 
An inner gluing over $\sS$ of $\sG_0$ is 
a pair $(\sG , \varphi)$, where $\sG$ is a reductive group scheme over $\sS$ and 
$\varphi$ is a global section of the Zariski sheaf 
\[
 \underline{\mathrm{Isom}}_{\sS}(\sG_0 \times_{\sS_0} \sS,\sG )/ 
 \underline{\mathrm{Inn}}_{\sS}(\sG_0 \times_{\sS_0} \sS) 
\]
on $\sS$. 
\end{defn}

\begin{rem}\label{rem:vecig}
Let $\sV$ be a vector bundle of rank $n$ on $\sS$. 
We put $\sG=\Aut_{\sS} (\sV)$. 
By taking Zariski local trivializations of 
$\sV$, we obtain an inner gluing $(\sG,\varphi_{\sV})$ over $\sS$ of $\GL_{n,\bZ}$. This is independent of the choice of trivializations, because a difference of trivializations induces an inner automorphism of $\GL_n$.  
\end{rem}

\begin{lem}\label{lem:igiso}
Let $\pi \colon \sS \to \sS_0$ be a morphism of schemes. 
Let $\sG_0$ be a reductive group scheme over $\sS_0$. 
Let $(\sG , \varphi)$ 
an inner gluing over $\sS$ of $\sG_0$. 
\begin{enumerate}
\item\label{en:Dyphi}
The section $\varphi$ induces isomorphisms 
\begin{align*}
 \mathrm{Oc}(\mathrm{Dyn}(\sG_0)) \times_{\sS_0} \sS &\xrightarrow{\sim}  \mathrm{Oc}(\mathrm{Dyn}(\sG)), \\ 
 \mathrm{TypeStand}(\sG_0) \times_{\sS_0} \sS &\xrightarrow{\sim}  \mathrm{TypeStand}(\sG) 
\end{align*}
which are compatible with $q_{\sG_0}$ and $q_{\sG}$. 
\item\label{en:Wphi}
Assume that $\sG_0$ is split and $\sS_0$ is connected. 
Let $(\sT_0,M,R)$ be a splitting of $\sG_0$ 
and $\Delta$ be a set of simple roots. 
Let $(W,S)$ be the Coxeter system of $(M,R,\Delta)$. 
Let $t_0,t_0' \in \mathrm{Oc}(\mathrm{Dyn}(\sG_0))(\sS_0)$. 
Let $t,t' \in \mathrm{Oc}(\mathrm{Dyn}(\sG))(\sS)$ 
denote the pullbacks to $\sS$ of $t_0,t_0'$. 
Then $\varphi$ induces an isomorphism 
\[
 (W_{t_0} \backslash W /W_{t_0'})_{\sS} \xrightarrow{\sim} 
 q_{\sG}^{-1}(t,t'). 
\]
\end{enumerate}
\end{lem}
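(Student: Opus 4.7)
The plan is to reduce both parts to functoriality of the constructions $\mathrm{Dyn}$, $\mathrm{Oc}(\mathrm{Dyn})$ and $\mathrm{TypeStand}$ with respect to isomorphisms of reductive group schemes, together with the fact that inner automorphisms act trivially on all three of these schemes. The former gives the construction of the isomorphism for any local section of $\underline{\mathrm{Isom}}_{\sS}(\sG_0 \times_{\sS_0} \sS,\sG)$, while the latter ensures the result depends only on the class modulo $\underline{\mathrm{Inn}}_{\sS}(\sG_0 \times_{\sS_0} \sS)$, so that the local isomorphisms glue.

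For (1), I would first note that any isomorphism $\psi \colon \sG_0 \times_{\sS_0} \sS \xrightarrow{\sim} \sG$ induces by functoriality canonical isomorphisms
\[
 \mathrm{Oc}(\mathrm{Dyn}(\sG_0)) \times_{\sS_0} \sS \xrightarrow{\sim} \mathrm{Oc}(\mathrm{Dyn}(\sG)), \qquad \mathrm{TypeStand}(\sG_0) \times_{\sS_0} \sS \xrightarrow{\sim} \mathrm{TypeStand}(\sG),
\]
and these are compatible with $q_{\sG_0}$ and $q_{\sG}$ by naturality. It then suffices to observe that replacing $\psi$ by $\psi \circ \mathrm{inn}(g)$ for a local section $g$ of $\sG_0 \times_{\sS_0} \sS$ does not change these induced isomorphisms: this is exactly the content of the inner-invariance of the Dynkin scheme (\cite[XXIV, 3.3]{SGA3-3}) and, for $\mathrm{TypeStand}$, is built into its definition as the quotient of $\mathrm{Stand}(\sG)$ by the conjugation action of $\sG$. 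Consequently $\varphi$, viewed as a descent datum along $\underline{\mathrm{Isom}} \to \underline{\mathrm{Isom}}/\underline{\mathrm{Inn}}$, produces Zariski-local isomorphisms which agree on overlaps and thus glue to the global isomorphisms claimed.

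For (2), I would apply the isomorphism \eqref{eq:WTS} to $\sG_0$ itself, choosing standard parabolics $\sP_0, \sP_0'$ of $\sG_0$ containing the maximal torus $\sT_0$ of the splitting with $\mathbf{t}(\sP_0) = t_0$ and $\mathbf{t}(\sP_0') = t_0'$; this gives a canonical isomorphism
\[
 W_{t_0} \backslash W / W_{t_0'} \xrightarrow{\sim} q_{\sG_0}^{-1}(t_0,t_0')
\]
of $\sS_0$-schemes (constant in $W$ on the left). Base changing along $\pi \colon \sS \to \sS_0$ and composing with the isomorphism from (1) yields
\[
 (W_{t_0} \backslash W / W_{t_0'})_{\sS} \xrightarrow{\sim} q_{\sG_0}^{-1}(t_0,t_0') \times_{\sS_0} \sS \xrightarrow{\sim} q_{\sG}^{-1}(t,t'),
\]
as required.

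The main obstacle is the verification that inner automorphisms act trivially on $\mathrm{Dyn}(\sG)$ and $\mathrm{TypeStand}(\sG)$ in a canonical, functorial way; this is the conceptual input from \cite{SGA3-3} that allows the \emph{inner} gluing datum $\varphi$ to suffice, and once it is in place the rest of the proof is formal descent plus a base change of the Weyl-group identification \eqref{eq:WTS}.
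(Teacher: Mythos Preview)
Your proposal is correct and follows essentially the same approach as the paper: both take local lifts of $\varphi$ to actual isomorphisms, use the inner-invariance of $\mathrm{Dyn}$ and of $\mathrm{TypeStand}$ (the paper cites \cite[XXIV, 3.4 (iv)]{SGA3-3} rather than 3.3, and argues for $\mathrm{TypeStand}$ via the quotient description exactly as you do) to glue, and then for part \ref{en:Wphi} base-change the Weyl-group identification along $\pi$ and compose with the isomorphism from part \ref{en:Dyphi}. The only cosmetic difference is that the paper invokes \cite[XXII, Proposition 3.4]{SGA3-3} for the double-coset identification where you invoke \eqref{eq:WTS}.
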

\begin{proof}
There exist a Zariski covering $\{ \sU_{\lambda} \}_{\lambda \in \Lambda}$ of $\sS$ and 
a family of isomorphisms $\varphi_{\lambda} \colon \sG_0 \times_{\sS_0} \sU_{\lambda} \xrightarrow{\sim} \sG \times_{\sS} \sU_{\lambda}$ such that 
$\varphi_{\lambda}$ is compatible with 
$\varphi|_{\sU_{\lambda}}$.  
Then the family of isomorphisms $\varphi_{\lambda}$ 
induces isomorphisms 
\[
 \mathrm{Oc}(\mathrm{Dyn}(\sG_0)) \times_{\sS_0} \sU_{\lambda} \xrightarrow{\sim}  \mathrm{Oc}(\mathrm{Dyn}(\sG \times_{\sS} \sU_{\lambda})). 
\]
These isomorphisms glue together to give the first isomorphism in the claim \ref{en:Dyphi} by \cite[XXIV, 3.4 (iv)]{SGA3-3}. 

The family of isomorphisms $\varphi_{\lambda}$ 
induce also isomorphisms 
\[
 \mathrm{Stand}(\sG_0) \times_{\sS_0} \sU_{\lambda} \xrightarrow{\sim}  \mathrm{Stand}(\sG \times_{\sS} \sU_{\lambda}). 
\]
By taking the quotients by the conjugacy actions of 
$\sG_0 \times_{\sS_0} \sU_{\lambda} \simeq  \sG \times_{\sS} \sU_{\lambda}$, we obtain 
isomorphisms 
\[
 \mathrm{TypeStand}(\sG_0) \times_{\sS_0} \sU_{\lambda} \xrightarrow{\sim}  \mathrm{TypeStand}(\sG \times_{\sS} \sU_{\lambda}). 
\]
These isomorphisms glue together to give the second isomorphism in the claim \ref{en:Dyphi} 
because we take quotients by conjugacy actions. 
By the constructions, two isomorphisms in 
the claim \ref{en:Dyphi} 
are compatible with $q_{\sG_0}$ and $q_{\sG}$. 

By \ref{en:Dyphi}, we have an isomorphism 
\begin{equation}\label{eq:q-1bc}
 q_{\sG_0}^{-1}(t_0,t_0') \times_{\sS_0} \sS \xrightarrow{\sim} 
 q_{\sG}^{-1}(t,t') 
\end{equation}
induced by $\varphi$. 
The claim \ref{en:Wphi} follows from \cite[XXII, Proposition 3.4]{SGA3-3} and \eqref{eq:q-1bc}. 
\end{proof}

\section{Stratification of Deligne--Lusztig variety}\label{sec:StDL}
\subsection{Deligne--Lusztig variety}
Let $\bfG_0$ be a connected reductive group over $\bF_q$. 
We take a maximal torus and a Borel subgroup 
$\bfT_0 \subset \bfB_0 \subset \bfG_0$ over $\bF_q$. 
We write 
$\bfG$, $\bfB$ and $\bfT$ for the base changes to 
$\ol{\bF}_q$ of $\bfG_0$, $\bfB_0$ and $\bfT_0$. 
Let $(W,S)$ be the Coxeter system of $\bfG$ with respect to $\bfT$ and $\bfB$. 
For $I, J \subset S$, we write 
$\Par_I (\bfG)$ and $\Par_{I,J} (\bfG)$ 
for 
$\Par_{t(I)} (\bfG)$ and $\Par_{t(I),t(J)} (\bfG)$. 

For $I, J \subset S$ and  $w \in W$, 
we put 
\[
 \Par_{I,J} (\bfG)_{[w]}=\bft_2^{-1}(r_w), 
\]
where $r_w \in (q_{\bfG}^{-1}(t(I),t(J)))(\ol{\bF}_q)$ 
corresponds to $[w] \in W_I \backslash W /W_J$ by 
Lemma \ref{lem:igiso} \ref{en:Wphi}. 
Let $\Par_{I,J} (\bfG)_{\leq [w]}$ be the closed reduced subscheme of $\Par_{I,J} (\bfG)$ 
determined by 
\[
 \bigcup_{[w'] \leq [w]} \Par_{I,J} (\bfG)_{[w']}. 
\]
Let $\oF$ be the $q$-th power Frobenius endomorphism of 
$\bfG$ obtained from $\bfG_0$. 
Let $I \subset S$ and  $w \in W$. 
For $* \in \{ [w], {\leq}[w] \}$ 
with $[w] \in W_I \backslash W /W_{\oF (I)}$, 
let $X_I^{\oF}(*)$ 
be the locally closed subscheme of $\Par_I (\bfG)$ 
defined by the fiber product 
\[
 \xymatrix{
 X_I^{\oF}(*)  \ar[r] \ar[d] &   \Par_{I,\oF (I)} (\bfG)_{*}         \ar[d] \\ 
  \Par_I (\bfG)  \ar[r]^-{(\id ,\oF)}   &  \Par_I (\bfG) \times \Par_{\oF (I)} (\bfG) .} 
\]
If $\bfP_I$ is a parabolic subgroup of 
$\bfG$ of type $I$ containing $\bfB$, then $X_I^{\oF}(*)$ is identified with 
\[
 \{ g \bfP_I \in \bfG/\bfP_I \mid g^{-1}\oF (g) \in \bfP_I\backslash \bfG / \bfP_{F(I)} \cong W_I \backslash W /W_{\oF (I)} \ \textrm{is} \ *\}
\]
under the isomorphism $\Par_I (\bfG) \simeq \bfG/\bfP_I$. 
If there is no confusion, we simply write $X_I(*)$ for $X_I^{\oF}(*)$. See \cite[\S 4.4]{VoWeGUII} for general properties of $X_I([w])$. 

For $I \subset J \subset S$, we have a natural morphism 
\[
 \pi_{I,J} \colon X_I([w]) \to X_J([w])
\] 
which sends a 
parabolic subgroup $\bfP$ of $\bfG$ of type $I$ 
to a unique parabolic subgroup $\bfP'$ of $\bfG$ of type $J$ containing $\bfP$. 

\subsection{Bruhat stratification}
Let $I, J \subset S$ and  $w \in W$. 
Let $\bfP_J$ be a parabolic subgroup of 
$\bfG$ of type $J$. 
For $* \in \{ [w'], {\leq}[w'] \}$ 
with $[w'] \in W_I \backslash W /W_J$, 
we let $X_I([w])_{\bfP_J,*}$ 
be the locally closed subscheme of $X_I([w])$ 
defined by the fiber product 
\[
 \xymatrix{
 X_I([w])_{\bfP_J,*}  \ar[r] \ar[d] &   \Par_{I,J} (\bfG)_{*}         \ar[d] \\ 
  X_I([w])  \ar[r]^-{(\id ,\bfP_J)}   &  \Par_I (\bfG) \times \Par_J (\bfG) .} 
\]
If $\bfP_I$ and $\bfP_J$ are parabolic subgroups of 
$\bfG$ of type $I$ and $J$ containing $\bfB$ respectively, then $X_I([w])_{\bfP_J,*}$ is identified with 
\[
 \{ g \bfP_I \in X_I([w]) \subset \bfG/\bfP_I \mid g^{-1} \in \bfP_I\backslash \bfG / \bfP_J \simeq W_I \backslash W /W_J \ \textrm{is} \ *\}. 
\]

\subsection{Stratification relative to Frobenius twists}
For $1 \leq i \leq m$, 
let $\oF_i$ be a Frobenius endomorphism of 
$\bfG$ which descends it to an algebraic group over a finite field. 
Let $w_1,\ldots ,w_m \in W$. 
For $*_i \in \{ [w_i], {\leq}[w_i] \}$ 
with $[w_i] \in W_I \backslash W /W_{\oF_i(I)}$ and $1 \leq i \leq m$, 
let $X_{I}^{\oF_1,\ldots,\oF_m}(*_1,\ldots,*_m)$ 
be the locally closed subscheme of $\mathrm{Par}_I$ 
defined by the fiber product 
\[
 \xymatrix{
 X_{I}^{\oF_1,\ldots,\oF_m}(*_1,\ldots,*_m) \ar[rr] \ar[d] & &  \prod_{1 \leq i \leq m} \Par_{I,\oF_i(I)} (\bfG)_{*_i}         \ar[d] \\ 
  \Par_I (\bfG)  \ar[rr]^-{\prod_{1 \leq i \leq m} (\id ,\oF_i)}   & & \prod_{1 \leq i \leq m} \left( \Par_I (\bfG) \times \Par_{\oF_i(I)} (\bfG) \right).} 
\]
Then 
$X_{I}^{\oF_1,\ldots,\oF_m}([w_1],\ldots,[w_m])$ for $[w_i] \in W_I \backslash W /W_{\oF_i(I)}$ and $2 \leq i \leq m$ 
give a stratification of 
$X_{I}^{\oF_1}([w_1])$. 
We note that 
$X_{I}^{\oF_1,\ldots,\oF_m}([w_1],\ldots,[w_m])=\bigcap_{1 \leq i \leq m} X_{I}^{\oF_i}([w_i])$ by the definition. 

\subsection{Unitary case}
We put $\bfV_0=\bF_{q^2}^d$ equipped with the hermitian form 
\begin{equation}\label{eq:Herpair}
 \bF_{q^2}^d \times \bF_{q^2}^d \to \bF_{q^2};\ 
 ((a_i)_{1 \leq i \leq d},(a_i')_{1 \leq i \leq d}) 
 \mapsto \sum_{i=1}^d a_i^q a_{d+1-i}' . 
\end{equation}
We put $\bfG_0=\GU (\bfV_0)$. 
By taking the first factor of the decomposition 
\[
 \bF_{q^2} \otimes_{\bF_q} \bF_{q^2} \simeq \bF_{q^2} \times \bF_{q^2};\ 
 a \otimes b \mapsto (ab,ab^q) , 
\]
we have an isomorphism 
\begin{equation}\label{eq:funisp}
 \bfG \simeq \GL_d \times \Gm . 
\end{equation}
Let $\bfT \subset \bfB \subset \bfG$ be the maximal torus and the Borel subgroup determined by 
the diagonal torus $T_d$ and the upper triangular subgroup $B_d$ of $\GL_d$ 
under \eqref{eq:funisp}. 
Let $(W_{\bfG},\{ s_1,\ldots ,s_{d-1} \})$ 
be the Coxeter system of $\bfG$ with respect to $\bfT$ and $\bfB$, 
where $s_i$ corresponds to the simple root 
\[
 T_d \times \Gm \to \Gm ;\ (\diag (x_1,\ldots,x_d),z) 
 \mapsto x_i x_{i+1}^{-1} 
\]
of $\GL_d \times \Gm$ under \eqref{eq:funisp}. 
For $1 \leq i_1 < \cdots < i_l \leq d-1$, 
we put 
\[
 I_{d}^{i_1,\ldots,i_l} = \{ s_i \}_{i \in \{ 1, \ldots, d-1 \} \setminus \{ i_1,\ldots,i_l\} }. 
\]

\begin{eg}\label{eq:Fermat}
By the correspondence between parabolic subgroups of $\GL_d$ of type $I_{d}^{1}$ and lines in $\bf{V}_0$, the scheme $X_{I_{d}^{1}}([1])$ parametrizes lines $L$ in $\bf{V}_0$ such that $L \subset \bf{V}_0$ is contained in $\oF (L^{\perp}) \subset \oF (\bf{V}_0^{\vee})$ under the identification $\bf{V}_0 \simeq \oF (\bf{V}_0^{\vee})$ given by the pairing \eqref{eq:Herpair}. 
Writing the coordinates of $L \in \bP^{d-1}$ as $(x_1,\ldots , x_d)$, we see that $X_{I_{d}^{1}}([1])$ is isomorphic to the Fermat hypersurface defined by 
\[
\sum_{i=1}^d x_i x_{d+1-i}^q=0 
\]
in $\bP^{d-1}$. 
\end{eg}

\begin{lem}\label{lem:irred}
Assume that $2 \leq i \leq d/2$. The schemes 
$X_{I_{d}^{i-1}}^{\oF,\oF^2,\oF^3}([1],{\leq}[s_{i-1}],[1])$ and 
$X_{I_{d}^{d-i}}^{\oF,\oF^2}([1],{\leq}[s_{d-i}])$ are irreducible. 
\end{lem}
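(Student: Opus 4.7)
For both schemes, the strategy is the same: stratify by the $\leq$-condition into the open stratum (strict Bruhat equality) and the closed stratum (the $[1]$-part), show that the open stratum is irreducible via an explicit incidence fibration over an irreducible base, and verify that the closed stratum lies in the closure of the open one by a dimension count.

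I work throughout with the identification $\Par_{I_d^{k}}(\bfG) \simeq \Gr(k,d)$ sending a parabolic to the $k$-dimensional subspace $V \subset \bfV^{+} \simeq \ol{\bF}_q^d$ it stabilizes, where $\bfV^+$ is one of the two $\bF_{q^2}$-eigenspaces of $\bfV_0\otimes\ol{\bF}_q$. Since $\oF$ interchanges the eigenspaces $\bfV^{\pm}$ and acts by $q$-Frobenius on coordinates, it induces on the Grassmannian the map $V \mapsto \phi(V) := (V^{(q)})^{\perp_h}$, where $\perp_h$ is with respect to the pairing $\bfV^+\times\bfV^-\to\ol{\bF}_q$ coming from $h$; the form is defined over $\bF_q$, so one computes $\oF^2(V)=V^{(q^2)}$ and $\oF^3(V)=\phi(V^{(q^2)}) = (V^{(q^3)})^{\perp_h}$. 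Under this dictionary, the condition $[1]$ for $\oF$ becomes the incidence $V\subset\phi(V)$ when $\dim V\leq d/2$ (equivalently $h(V,V^{(q)})=0$) or $\phi(V)\subset V$ when $\dim V\geq d/2$, and $\leq[s_k]$ for $\oF^2$ with $k=\dim V$ becomes the Schubert inequality $\dim(V\cap V^{(q^2)})\geq k-1$.

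For $X_{I_d^{d-i}}^{\oF,\oF^2}([1],\leq[s_{d-i}])$ the moduli conditions on $V$ of dimension $d-i$ read $\phi(V)\subset V$ and $\dim(V\cap V^{(q^2)})\geq d-i-1$. I would introduce the incidence scheme $\wt Z$ of pairs $(V',V)$ with $\dim V'=d-i-1$, $V'\subset V\cap V^{(q^2)}$ and $\phi(V)\subset V$. The second projection from $\wt Z$ to the target is surjective with connected fibers (a single point over the open stratum $V\neq V^{(q^2)}$, and a projective space $\bP^{d-i-1}$ over the closed stratum $V=V^{(q^2)}$), so it is enough to show $\wt Z$ is irreducible. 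For this I would use the first projection $\wt Z\to\Gr(d-i-1,d)$: its image is the $\oF^2$-twisted Bruhat locus of $V'$ with $\dim(V'\cap V'^{(q^2)})\geq d-i-2$, and its fibers are cut out of the $\bP^i$-bundle of extensions $V\supset V'+V'^{(q^{-2})}$ by the Schubert condition $\phi(V)\subset V$; a direct analysis shows this base and these fibers are irreducible.

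For $X_{I_d^{i-1}}^{\oF,\oF^2,\oF^3}([1],\leq[s_{i-1}],[1])$ the conditions on $V$ of dimension $i-1$ become $h(V,V^{(q)})=0$, $\dim(V\cap V^{(q^2)})\geq i-2$, and $h(V,V^{(q^3)})=0$. The base case $i=2$ is immediate: $V$ is a line, the $\oF^2$-condition is vacuous, and the scheme is the perfect closed subscheme of $\bP(\bfV^+)^{\pf}$ cut out by the two Hermitian equations of degrees $q+1$ and $q^3+1$, which after diagonalization of $h$ is precisely the complete intersection $\sum x_j^{q+1}=\sum x_j^{q^3+1}=0$ appearing in the introduction, whose irreducibility is a standard complete-intersection computation. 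For $i\geq 3$ I would apply the same incidence strategy, fibering over $V_0:=V\cap V^{(q^2)}$ of dimension $i-2$ in the open stratum and reducing to an analogous lower-dimensional Hermitian orthogonality problem. The main obstacle in both parts is the verification that the closed stratum (where $V$ is $\oF^2$-fixed, hence $\bF_{q^2}$-rational) has strictly smaller dimension than the open one inside the ambient $\oF$-Deligne--Lusztig variety $X_{I}^{\oF}([1])$, so that the closure of the open stratum already fills out the entire scheme; this boils down to a dimension count combined with an explicit deformation moving an $\oF^2$-fixed $V$ along a curve preserving the Hermitian conditions into the generic locus.
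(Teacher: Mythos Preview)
Your approach is quite different from the paper's and considerably more laborious. The paper never stratifies or runs a dimension count. Instead it introduces the two-step Deligne--Lusztig variety $X_{I_d^{i-1,d-i}}([1])$, parametrizing flags $V_1\subset V_2$ with $\dim V_1=i-1$, $\dim V_2=d-i$ and $V_1\subset\oF(V_2^{\perp})\subset V_2\subset\oF(V_1^{\perp})$, which is irreducible by the theorem of Bonnaf\'e--Rouquier. It then checks that the two target schemes are exactly the images of this irreducible variety under the forgetful projections to $V_1$ and to $V_2$. For example, the $\oF^3$-condition on $V_1$ drops out from the chain $\oF^3(V_1)\subset\oF^2(V_2^{\perp})\subset\oF(V_2)\subset V_1^{\perp}$, and the ${\leq}[s_{i-1}]$ condition from $V_1+\oF^2(V_1)\subset\oF(V_2^{\perp})$; conversely, over the open stratum one reconstructs $V_2$ as $\oF^{-1}(V_1^{\perp}\cap\oF^2(V_1^{\perp}))$, and properness of the projection handles the boundary. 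Since the image of an irreducible scheme is irreducible, this finishes both claims at once, with no closure argument needed.

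Your plan, by contrast, leaves real gaps. The base of your incidence fibration for the second scheme---the locus of $V'$ with $\dim(V'\cap V'^{(q^2)})\geq d-i-2$---is a variety of the same shape as the one you are trying to analyze, so you are implicitly setting up a recursion without a visible termination. The closure step you flag as the ``main obstacle'' (that the $\oF^2$-fixed locus lies in the closure of the generic stratum) is acknowledged but not carried out; the promised explicit deformation is not supplied, and for general $i$ this is genuinely delicate. Finally, irreducible fibers over an irreducible base do not by themselves force the total space to be irreducible without an equidimensionality or openness input, which you do not address. All of these difficulties are bypassed by the paper's device of passing to the auxiliary flag variety and invoking Bonnaf\'e--Rouquier once.
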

\begin{proof}
The scheme $X_{I_{d}^{i-1,d-i}}([1])$ 
is irreducible by \cite[Theorem 1]{BoRoIrrDL}. 
Hence, it suffices to show the following claims: 
\begin{enumerate}
\item \label{en:imi-1}
The image of 
\[
 \pi_{I_{d}^{i-1,d-i},I_{d}^{i-1}} \colon 
 X_{I_{d}^{i-1,d-i}}([1]) \to X_{I_{d}^{i-1}}([1])
\]
on $\ol{\bF}_{q}$-valued points is equal to 
$X_{I_{d}^{i-1}}^{\oF,\oF^2,\oF^3}([1],{\leq}[s_{i-1}],[1])(\ol{\bF}_{q})$.
\item \label{en:imd-i}
The image of 
\[
 \pi_{I_{d}^{i-1,d-i},I_{d}^{d-i}} \colon 
 X_{I_{d}^{i-1,d-i}}([1]) \to X_{I_{d}^{d-i}}([1])
\]
on $\ol{\bF}_{q}$-valued points is equal to  $X_{I_{d}^{d-i}}^{\oF,\oF^2}([1],{\leq}[s_{d-i}])(\ol{\bF}_{q})$. 
\end{enumerate}
We show the claim \ref{en:imi-1}. 
We equip $\ol{\bF}_{q}^d$ with the pairing 
\begin{equation}\label{eq:olpari}
 \ol{\bF}_q^d \times \ol{\bF}_q^d \to \ol{\bF}_q;\ 
 ((x_i)_{1 \leq i \leq d},(y_i)_{1 \leq i \leq d}) 
 \mapsto \sum_{i=1}^d x_i y_{d+1-i} .     
\end{equation}
For an $\ol{\bF}_q$-vector subspace 
$V \subset \ol{\bF}_q^d$, let 
$V^{\perp}$ denote the orthogonal complement of $V$ with respect to the pairing \eqref{eq:olpari}. 
The $q$-th power Frobenius element $\oF$ acts on 
$\ol{\bF}_{q}^d$. 
A point of $X_{I_{d}^{i-1,d-i}}([1])(\ol{\bF}_q)$ 
corresponds to a filtration 
$0 \subset V_1 \subset V_2 \subset \ol{\bF}_{q}^d$ 
such that $\dim V_1 =i-1$, $\dim V_2 =d-i$ 
and 
\begin{equation}\label{eq:fil12}
 V_1 \subset \oF (V_2^{\perp}) \subset V_2 \subset \oF (V_1^{\perp}). 
\end{equation}
The condition \eqref{eq:fil12} implies 
\begin{equation}\label{eq:V1F2}
 V_1 +\oF^2(V_1) \subset \oF (V_2^{\perp}). 
\end{equation}
Therefore we have 
\begin{equation}\label{eq:F3V1}
 \oF^3(V_1) \subset \oF (V_1 +\oF^2(V_1)) \subset \oF^2(V_2^{\perp}) \subset 
 \oF (V_2) \subset V_1^{\perp} \cap \oF^2(V_1^{\perp}) \subset V_1^{\perp}. 
\end{equation}
The conditions 
\eqref{eq:fil12}, \eqref{eq:V1F2} 
and \eqref{eq:F3V1} imply that 
$V_1$ defines a point of 
\begin{equation*}\label{eq:ptXF123}
 X_{I_{d}^{i-1}}^{\oF,\oF^2,\oF^3}([1],{\leq}[s_{i-1}],[1])(\ol{\bF}_{q}), 
\end{equation*}
because $\dim \oF (V_2^{\perp})=i$. 
To show the claim \ref{en:imi-1}, 
it suffices to show that the image of $\pi_{I_{d}^{i-1,d-i},I_{d}^{i-1}}$ 
on $\ol{\bF}_{q}$-valued points contains 
\begin{equation}\label{eq:ptXF123op}
X_{I_{d}^{i-1}}^{\oF,\oF^2,\oF^3}([1],[s_{i-1}],[1])(\ol{\bF}_{q}), 
\end{equation}
because 
$X_{I_{d}^{i-1,d-i}}([1])$ is proper. 
A point of \eqref{eq:ptXF123op} gives 
an $\ol{\bF}_q$-vector subspace 
$V_1 \subset \ol{\bF}_q^d$ of dimension $i-1$ such that 
\begin{align}\label{eq:cdV1op}
 V_1 \subset \oF (V_1^{\perp}), \quad 
 \dim (V_1 +\oF^2 (V_1)) = i, \quad 
 \oF^3(V_1) \subset V_1^{\perp}. 
\end{align}
The condition 
implies 
\[
 \oF (V_1 +\oF^2(V_1)) \subset V_1^{\perp} \cap \oF^2(V_1^{\perp}) 
\]
and $\dim V_1^{\perp} \cap \oF^2(V_1^{\perp})=d-i$. 
We take $V_2 \subset \ol{\bF}_q^d$ such that 
$\oF (V_2) =V_1^{\perp} \cap \oF^2(V_1^{\perp})$. 
Then $(V_1,V_2)$ defines a point of 
$X_{I_{d}^{i-1,d-i}}([1])(\ol{\bF}_q)$ whose image under 
$\pi_{I_{d}^{i-1,d-i},I_{d}^{i-1}}$ is the point of 
\eqref{eq:ptXF123op} corresponding to $V_1$. 
Therefore we obtain the claim \ref{en:imi-1}. 

The claim \ref{en:imd-i} is proved similarly. 
\end{proof}

\section{Affine Grassmannian}\label{sec:AffG}

Let $F$ be a non-archimedean local field with residue field $k=\bF_q$. Let $\cO_F$ be the ring of integers of $F$.
Let $\varpi$ be a uniformizer of $F$. 
For a perfect $k$-algebra $R$, 
we put 
\[
 W_{\cO_F}(R)=\varprojlim_{n} W(R) \otimes_{W(k)} \cO_F /\varpi^n , 
\]
$D_R = \mathrm{Spec}(W_{\cO_F}(R) )$ and $D^*_R = \mathrm{Spec}(W_{\cO_F}(R)[\frac{1}{\varpi}] )$. 
For an affine group scheme $H$ of finite type over $\cO_F$, we define the jet group 
$\rL^+H$ and the loop group $\rL H$ by 
\[
 \rL^+H(R)=H(W_{\cO_F}(R)), \quad 
 \rL H(R)=H(W_{\cO_F}(R)[\frac{1}{\varpi}]). 
\]
We put $L=W_{\cO_F}(\ol{k})[\frac{1}{\varpi}]$. We note that 
$\rL H(\ol{k})=H(L)$. 

Let $G$ be a reductive group scheme over $\cO_F$. 
Let $T$ be the abstract Cartan subgroup of $G$. 
Let $\Phi \subset \bX^{\bullet}( T)$ denote the set of roots of $G$ in the weight lattice, and 
$\bX_{\bullet}( T)^+ \subset \bX_{\bullet}( T)$ the semi-group of dominant coweights in the coweight lattice. 
Let $\rho \in \bX^{\bullet}(T)_{\bQ}$ be the half sum of all positive roots. 
We fix a Borel subgroup $B \subset G$. 
Let $U$ be the unipotent radical of $B$. 
Then $T$ is canonically identified with $B/U$. 

Let $\Gr_G$ denote the affine Grassmannian over $k$ of $G$ 
defined by $\Gr_G = \rL G/\rL^+ G$. 
For a finite etale extension $\cO'$ of $\cO_F$ with residue field $k'$, 
we have a natural isomorphism 
\begin{equation}\label{eq:Grbc}
 (\Gr_G)_{k'} \simeq \Gr_{G_{\cO'}} 
\end{equation}
by the construction. 
We simply write $\Gr$ for $\Gr_G$ if there is no confusion. 
Then $\Gr$ is an ind-perfectly projective scheme 
by \cite[Corollary 9.6]{BhScProWG}. 
Let $\cE^0$ denote the trivial $G$-torsor over $\cO_F$. 
For a perfect $k$-algebra $R$, we have 
\begin{equation}\label{eq:Grmod}
 \Gr(R) =  \left\{ 
 (\cE, \beta) \relmiddle{|} 
 \begin{tabular}{ l }
  $\cE$ is a $G$-torsor on $D_R$,  \\ 
  $\beta \colon \cE|_{D^*_R} \simeq \cE^0|_{D^*_R}$ is a trivialization 
  \end{tabular} 
\right\}    
\end{equation}
(\cf \cite[Lemma 1.3]{ZhuAffGmix}). 
We sometimes write $\beta \colon \cE \dashrightarrow \cE^0$ for 
$\beta \colon \cE|_{D^*_R} \simeq \cE^0|_{D^*_R}$ in \eqref{eq:Grmod}, 
and call it a modification.  
Given a point $(\cE, \beta)$, one can define a relative position invariant $\inv (\beta) \in \bX_{\bullet}( T)^+$.

Let $\mu \in \bX_{\bullet}( T)^+$. 
The Schubert variety $\Gr_{\mu}$ 
is the closed subscheme of $\Gr_{\ol{k}}$ 
parametrizing pairs $(\cE, \beta)$ such that 
$\inv (\beta) \preceq \mu$. 
The Schubert cell $\mathring{\mathrm{Gr}}_{\mu}$ 
is the open subscheme of $\Gr_{\mu}$ 
parametrizing pairs $(\cE, \beta)$ such that 
$\inv (\beta) = \mu$. 

For a sequence $\mu_{\bullet} =(\mu_1, \ldots, \mu_n)$ of dominant coweights, let $\Gr_{\mu_{\bullet}}$  be the scheme over $\ol{k}$ parametrizing sequences of modifications 
$( \beta_i \colon \cE_i \dashrightarrow \cE_{i-1})_{1 \leq i \leq n}$ with 
$\cE_0=\cE^0$ 
such that $\inv (\beta_i) \preceq \mu_i$ for each $i$. 
The open subscheme 
$\mathring{\Gr}_{\mu_{\bullet} } \subset \Gr_{\mu_{\bullet} }$ 
is defined by the condition that 
$\inv (\beta_i) = \mu_i$ for each $i$. 
The convolution map $m_{\mu_{\bullet}} \colon \Gr_{\mu_{\bullet}} \rightarrow \Gr_{\ol{k}}$ sends a sequence of modifications to the composition $(\cE_n, \beta_1 \circ \cdots \circ \beta_n)$. 

Let $\lambda_{\bullet}=(\lambda_1, \ldots, \lambda_l)$ and 
$\mu_{\bullet}=(\mu_1, \ldots, \mu_n)$ be two sequences. 
We put 
\[
 \Gr^0_{\lambda_\bullet | \mu_{\bullet}} = \Gr_{\lambda_{\bullet} }  \times_{\Gr_{\ol{k}}} 
 \Gr_{\mu_{\bullet} }, \quad 
 \mathring{\Gr}^0_{\lambda_\bullet | \mu_{\bullet}} = \mathring{\Gr}_{\lambda_{\bullet} }  \times_{\Gr_{\ol{k}}} 
 \mathring{\Gr}_{\mu_{\bullet} }, 
\]
where the products are over the convolution maps $m_{\lambda_{\bullet}} \colon \Gr_{\lambda_{\bullet} } \rightarrow \Gr_{\ol{k}}$, $m_{\mu_{\bullet}} \colon \Gr_{\mu_{\bullet} } \rightarrow \Gr_{\ol{k}}$ and their restrictions respectively. 
We write 
\[
 m_{\lambda_\bullet | \mu_{\bullet}} \colon 
 \Gr^0_{\lambda_\bullet | \mu_{\bullet}} \to \Gr_{\ol{k}} 
\]
for the natural projection. 
We simply write $m$ for $m_{\lambda_\bullet | \mu_{\bullet}}$ if there is no confusion.  
For $1 \leq j \leq l$, we define 
\[
 \pr_j \colon \Gr^0_{\lambda_\bullet | \mu_{\bullet}} \to 
 \Gr_{(\lambda_1,\ldots,\lambda_j)} 
\]
by sending $((\alpha_i)_{1 \leq i \leq l},(\beta_i)_{1 \leq i \leq n})$ 
to $(\alpha_i)_{1 \leq i \leq j}$. 

An irreducible component of 
$\Gr^0_{\lambda_{\bullet} | \mu_{\bullet}}$ 
of dimension $\langle \rho , \lvert \lambda_{\bullet} \rvert +\lvert \mu_{\bullet} \rvert \rangle$ 
is called a Satake cycle. 
Let $\bS_{\lambda_\bullet | \mu_{\bullet}}$ 
be the set of Satake cycles in 
$\Gr^0_{\lambda_\bullet | \mu_{\bullet}}$. 
We sometimes write 
$\Gr^{0,\bfa}_{\lambda_\bullet | \mu_{\bullet}}$ instead of $\bfa \in \bS_{\lambda_\bullet | \mu_{\bullet}}$ 
for the Satake cycle. 
We put 
\[
 \mathring{\Gr}^{0,\bfa}_{\lambda_\bullet | \mu_{\bullet}} = 
 \Gr^{0,\bfa}_{\lambda_\bullet | \mu_{\bullet}} \cap 
 \mathring{\Gr}^0_{\lambda_\bullet | \mu_{\bullet}}. 
\]
\begin{lem}\label{lem:SatCyCl}
For $\bfa \in \bS_{\lambda_\bullet | \mu_{\bullet}}$, the scheme 
$\mathring{\Gr}^{0,\bfa}_{\lambda_\bullet | \mu_{\bullet}}$ 
is not empty. 
\end{lem}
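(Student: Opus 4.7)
The plan is a dimension count using the finer stratification of $\Gr^0_{\lambda_\bullet|\mu_\bullet}$ by the individual relative-position invariants of the $\alpha_i$'s and $\beta_j$'s. First I would observe that the complement
\[
 \Gr^0_{\lambda_\bullet|\mu_\bullet}\setminus\mathring{\Gr}^0_{\lambda_\bullet|\mu_\bullet}
\]
is the finite union, taken over all pairs $(\lambda'_\bullet,\mu'_\bullet)$ with $\lambda'_i\preceq \lambda_i$ and $\mu'_j\preceq \mu_j$ and at least one of these inequalities strict, of the closed subschemes $\Gr^0_{\lambda'_\bullet|\mu'_\bullet}$. This follows directly from the defining conditions $\inv(\alpha_i)\preceq \lambda_i$, $\inv(\beta_j)\preceq \mu_j$ and the finiteness of the set of dominant coweights below a fixed one.

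Next I would invoke the equidimensionality of Satake cycles that is recalled and generalized in \S\ref{sec:EqScyc}: every irreducible component of $\Gr^0_{\lambda'_\bullet|\mu'_\bullet}$ has dimension at most $\langle \rho,|\lambda'_\bullet|+|\mu'_\bullet|\rangle$. Since the difference $|\lambda_\bullet|+|\mu_\bullet|-(|\lambda'_\bullet|+|\mu'_\bullet|)$ is a nonzero nonnegative integral combination of positive coroots, and $\rho$ pairs strictly positively with every positive coroot, we obtain
\[
 \langle \rho,|\lambda'_\bullet|+|\mu'_\bullet|\rangle < \langle \rho,|\lambda_\bullet|+|\mu_\bullet|\rangle.
\]

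Consequently the irreducible Satake cycle $\Gr^{0,\bfa}_{\lambda_\bullet|\mu_\bullet}$, having top dimension $\langle \rho,|\lambda_\bullet|+|\mu_\bullet|\rangle$, cannot be contained in any single $\Gr^0_{\lambda'_\bullet|\mu'_\bullet}$ with $(\lambda'_\bullet,\mu'_\bullet)$ strictly smaller, and since the complement is a finite union of such closed subschemes it cannot be contained in the complement either. Therefore $\mathring{\Gr}^{0,\bfa}_{\lambda_\bullet|\mu_\bullet}$ is a nonempty (indeed open and dense) subscheme of $\Gr^{0,\bfa}_{\lambda_\bullet|\mu_\bullet}$.

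The main obstacle is the availability of the dimension upper bound $\dim \Gr^0_{\lambda'_\bullet|\mu'_\bullet} \leq \langle \rho,|\lambda'_\bullet|+|\mu'_\bullet|\rangle$ \emph{uniformly} across the shifted pairs $(\lambda'_\bullet,\mu'_\bullet)$; this is precisely what the equidimensionality results of \S\ref{sec:EqScyc} supply. Once that input is in place, the remainder is a purely formal stratification-and-dimension argument combined with the positivity of $\rho$ on positive coroots.
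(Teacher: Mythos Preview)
Your approach is essentially identical to the paper's: both argue that the complement $\Gr^0_{\lambda_\bullet|\mu_\bullet}\setminus\mathring{\Gr}^0_{\lambda_\bullet|\mu_\bullet}$ has dimension strictly less than $\langle\rho,|\lambda_\bullet|+|\mu_\bullet|\rangle$, so an irreducible component $\bfa$ of top dimension cannot be contained in it. The only discrepancy is the source of the dimension bound: the paper invokes \cite[Proposition~3.1.10~(1)]{XiZhCycSh} directly, whereas you point to \S\ref{sec:EqScyc}. Be aware that the results in \S\ref{sec:EqScyc} carry minuscule (or sum-of-minuscule) hypotheses on the cocharacters, which the $\lambda'_i,\mu'_j$ arising in your stratification of the boundary need not satisfy in general; the general upper bound you need is exactly the content of the cited proposition from \cite{XiZhCycSh}, and Lemma~\ref{lem:SatCyCl} is placed in \S\ref{sec:AffG} precisely because it does not depend on \S\ref{sec:EqScyc}.
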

\begin{proof}
The dimension of 
$\Gr^0_{\lambda_\bullet | \mu_{\bullet}} \setminus \mathring{\Gr}^0_{\lambda_\bullet | \mu_{\bullet}}$ 
is less than $\langle \rho , \lvert \lambda_{\bullet} \rvert +\lvert \mu_{\bullet} \rvert \rangle$ by \cite[Proposition 3.1.10 (1)]{XiZhCycSh}. Hence we obtain the claim. 
\end{proof}

We fix an embedding 
$T \subset B$. Let $\mu \in \bX_{\bullet}( T)$. 
Let $\cO'$ be $\cO_L=W_{\cO_F}(\ol{k})$ or a finite etale extension of $\cO_F$ 
which splits $G$. 
For $\alpha \in \Phi$, let $U_{\alpha,\cO'}$ denote the root subgroup of 
$G_{\cO'}$ corresponding to $\alpha$. 
Let $P_{\mu,\cO'}$ denote the parabolic subgroup of 
$G_{\cO'}$ generated by $T_{\cO'}$ and $U_{\alpha,\cO'}$ for 
$\alpha \in \Phi$ such that $\langle \alpha , \mu \rangle \geq 0$.

We write $\varpi^\mu$ for 
$\mu (\varpi) \in G(L)=\rL G (\ol{k})$. 
Let $[\varpi^{\mu}]$ denote the point of $\Gr_{\ol{k}}$ determined by $\varpi^{\mu}$. 
For $\mu \in \bX_{\bullet}( T)^+$, 
the Schubert cell $\mathring{\mathrm{Gr}}_{\mu}$ is the $\rL^+ G$-orbit of 
$[\varpi^{\mu}]$ by \cite[Proposition 1.23 (1)]{ZhuAffGmix}. 

\begin{lem}\label{lem:SatSurj}
For $\bfa \in \bS_{\lambda_\bullet | \mu}$, the natural morphism 
$\Gr^{0,\bfa}_{\lambda_\bullet | \mu} \to  
 \Gr_{\mu}$ is surjective. 
\end{lem}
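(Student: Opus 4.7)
The plan is to show that the image of the Satake cycle $\Gr^{0,\bfa}_{\lambda_\bullet | \mu}$ under the projection to $\Gr_\mu$ is a closed, irreducible, $\rL^+ G$-stable subset that meets the open Schubert cell $\mathring{\Gr}_\mu$; any such subset of $\Gr_\mu$ must then equal $\Gr_\mu$, which will give the claim.

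First I would identify $\Gr^{0}_{\lambda_\bullet | \mu}$ with the preimage $m_{\lambda_\bullet}^{-1}(\Gr_\mu)$ inside $\Gr_{\lambda_\bullet}$ under the convolution map (since the second factor in the fiber product is $\Gr_\mu$ itself, mapping to $\Gr_{\ol{k}}$ by the closed immersion). Because $m_{\lambda_\bullet} \colon \Gr_{\lambda_\bullet} \to \Gr_{\ol{k}}$ is projective, the projection $\Gr^0_{\lambda_\bullet | \mu} \to \Gr_\mu$ is projective as well, so the image of any Satake cycle is a closed irreducible subset of $\Gr_\mu$.

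Next I would exploit $\rL^+ G$-equivariance. The group $\rL^+ G$ acts on $\Gr_{\lambda_\bullet}$ by left multiplication on the first modification, and $m_{\lambda_\bullet}$ is equivariant for the left action on $\Gr$. Restricting to $\Gr_\mu$ yields an $\rL^+ G$-action on $\Gr^0_{\lambda_\bullet | \mu}$ for which the projection to $\Gr_\mu$ is equivariant. Since $\rL^+ G$ is connected (its quotient by the pro-unipotent kernel is the connected reductive group $G_{\ol{k}}$), it preserves every irreducible component of $\Gr^0_{\lambda_\bullet | \mu}$; in particular the Satake cycle $\Gr^{0,\bfa}_{\lambda_\bullet | \mu}$ is $\rL^+ G$-stable, and hence so is its image in $\Gr_\mu$.

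To conclude, I would invoke Lemma \ref{lem:SatCyCl}: the open piece $\mathring{\Gr}^{0,\bfa}_{\lambda_\bullet | \mu}$ is non-empty, and any of its points projects to a point of $\Gr_\mu$ with invariant equal to $\mu$, i.e.\ to a point of $\mathring{\Gr}_\mu$. Hence the image meets $\mathring{\Gr}_\mu$. The closed $\rL^+ G$-stable subsets of $\Gr_\mu$ are precisely the unions $\bigcup_{\mu' \preceq \mu} \Gr_{\mu'}$, so irreducibility forces the image to be a single $\Gr_{\mu'}$, and meeting $\mathring{\Gr}_\mu$ then forces $\mu' = \mu$. The main obstacle I anticipate is the $\rL^+ G$-stability of the Satake cycle, which rests on the appropriate notion of connectedness of $\rL^+ G$ in the present perfect/ind-scheme setting; as a fallback, this step can be replaced by a Mirkovi\'c--Vilonen type dimension estimate $\dim m_{\lambda_\bullet}^{-1}(\mathring{\Gr}_{\mu'}) \leq \langle \rho, \lvert \lambda_\bullet \rvert + \mu' \rangle$, which is strictly less than $\dim \Gr^{0,\bfa}_{\lambda_\bullet | \mu}$ whenever $\mu' \prec \mu$ and so directly forces the image to contain a point of $\mathring{\Gr}_\mu$.
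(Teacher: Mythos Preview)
Your proposal is correct and follows essentially the same approach as the paper. Both arguments rest on the same three ingredients: (i) the Satake cycle is $\rL^+ G$-stable, (ii) by Lemma~\ref{lem:SatCyCl} its open part $\mathring{\Gr}^{0,\bfa}_{\lambda_\bullet | \mu}$ is non-empty and hence maps to $\mathring{\Gr}_\mu$, and (iii) properness of the projection to $\Gr_\mu$. The paper organizes these slightly differently---it first uses transitivity of the $\rL^+ G$-action on $\mathring{\Gr}_\mu$ together with (i) and (ii) to conclude that $\mathring{\Gr}^{0,\bfa}_{\lambda_\bullet | \mu} \to \mathring{\Gr}_\mu$ is already surjective, and then finishes with properness and the density of $\mathring{\Gr}_\mu$ in $\Gr_\mu$---whereas you first take the closed image and then classify closed $\rL^+ G$-stable irreducible subsets of $\Gr_\mu$; but these are two packagings of the same reasoning. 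Your explicit justification of (i) via connectedness of $\rL^+ G$ is a point the paper leaves implicit.
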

\begin{proof}
The natural morphism 
$\mathring{\Gr}^{0,\bfa}_{\lambda_\bullet | \mu} \to  \mathring{\Gr}_{\mu}$ is surjective, 
because the action of $\rL^+ G$ on 
$\mathring{\mathrm{Gr}}_{\mu}$ is transitive and 
$\mathring{\Gr}^{0,\bfa}_{\lambda_\bullet | \mu}$ 
is a nonempty scheme stable under the action of 
$\rL^+ G$ by Lemma \ref{lem:SatCyCl}. 
Hence we obtain the claim 
because 
$\Gr^{0}_{\lambda_\bullet | \mu} \to  
 \Gr_{\mu}$ is perfectly proper and 
$\mathring{\Gr}_{\mu} \subset \Gr_{\mu}$ is Zariski dense by \cite[Proposition 1.23 (3)]{ZhuAffGmix}. 
\end{proof}

For $\lambda \in \bX_{\bullet}(T)$, 
let 
$S_{\lambda}$ 
be the 
$(\rL U)_{\ol{k}}$-orbit of $\varpi^{\lambda}$ in $\Gr_{\ol{k}}$. 
For $\lambda \in \bX_{\bullet}(T)$ and 
$\mu \in \bX_{\bullet}(T)^+$, 
an irreducible component of 
$S_{\lambda} \cap \Gr_{\mu}$ is called 
a Mirkovi\'{c}--Vilonen cycle after \cite{MiViGeoLd}. 
Let $\bMV_{\mu}(\lambda)$ be the 
set of the Mirkovi\'{c}--Vilonen cycles in 
$S_{\lambda} \cap \Gr_{\mu}$. 
We sometimes write 
$(S_{\lambda} \cap \Gr_{\mu})^{\bfb}$ 
instead of 
$\bfb \in \bMV_{\mu}(\lambda)$ 
for the Mirkovi\'{c}--Vilonen cycle. 

Let $(\wh{G},\wh{B},\wh{T})$ 
be the Langlands dual 
over $\ol{\bQ}_{\ell}$ of $(G,B,T)$. 
For $\mu \in \bX_{\bullet}(T)^+=\bX^{\bullet}(\wh{T})^+$, 
let $V_{\mu}$ denote the irreducible algebraic representation of 
$\wh{G}$ of highest weight $\mu$. 
For an algebraic representation $V$ of $\wh{G}$ 
and $\lambda \in \bX_{\bullet}(T)=\bX^{\bullet}(\wh{T})$, 
let $V(\lambda)$ denote the $\lambda$-weight space of $V$. 
Then we have 
\begin{equation}\label{eq:MVdim}
 \lvert \bMV_{\mu}(\lambda) \rvert = \dim V_{\mu} (\lambda) 
\end{equation}
by \cite[Proposition 5.4.2]{GHKRDimDL} and 
\cite[Corollary 2.8]{ZhuAffGmix}. 

For $\nu, \mu \in \bX_{\bullet}(T)^+$ and 
$\lambda \in \bX_{\bullet}(T)$ such that 
$\nu+\lambda \in \bX_{\bullet}(T)^+$, 
there is an injective map 
\begin{equation*}
 i_{\nu}^{\bMV} \colon \bS_{(\nu,\mu)|\nu+\lambda} \to 
 \bMV_{\mu} (\lambda) 
\end{equation*}
constructed by \cite[Lemma 3.2.7]{XiZhCycSh}. 

\section{Equidimensionality of Satake cycles}\label{sec:EqScyc}
Let $\mu_{\bullet} =(\mu_1, \ldots, \mu_n) \in (\bX_{\bullet}(T)^+)^n$ and $\lambda \in \bX_{\bullet}(T)^+$. 
\begin{lem}\label{lem:Ztri}
The morphism 
$m_{\mu_{\bullet}} \colon \Gr_{\mu_{\bullet}} \to \Gr_{\ol{k}}$ 
is Zariski-locally trivial over $\mathring{\Gr}_{\lambda}$ in the sense that for any point $y$ of $\mathring{\Gr}_{\lambda}$ there is a Zariski open subspace $V \subset \mathring{\Gr}_{\lambda}$ with $y \in V$ and $\overline{k}$-scheme $Y$ such that $m_{\mu_{\bullet}}^{-1}(V) \to V$ is identified with the projection $Y \times_{\overline{k}} V \to V$. 
\end{lem}
\begin{proof}
Taking the base change to 
an unramified extension of $\cO_F$, 
we may assume that $G$ is split by \eqref{eq:Grbc}. 
As in the proof of \cite[Lemma 2.1]{HaiEquconv}, 
it suffices to show that 
\[
 \rL^+G \to \rL^+G/(\rL^+G \cap \varpi^{\lambda} \rL^+G \varpi^{-\lambda}) 
\]
has a section Zariski-locally. 
Since 
$\rL^+U/(\rL^+U \cap \varpi^{\lambda} \rL^+U \varpi^{-\lambda})$ 
is an open subscheme of 
$\rL^+G/(\rL^+G \cap \varpi^{\lambda} \rL^+G \varpi^{-\lambda})$, 
it suffices to show that 
\[
 \rL^+U \to \rL^+U/(\rL^+U \cap \varpi^{\lambda} \rL^+U \varpi^{-\lambda}) 
\]
has a section. 
We fix an identification $\Ga \simeq U_{\alpha,\cO_F}$ 
for a positive root $\alpha$. 
For a positive root $\alpha$, 
let 
$\rL^+_{< \langle \alpha, \lambda \rangle} U_{\alpha,\cO_F}$ 
be the closed subscheme of $\rL^+U_{\alpha,\cO_F}$ 
defined by the condition 
$x_i=0$ for $i \geq \langle \alpha, \lambda \rangle$ 
for a point 
$\sum_{i=0}^{\infty} \varpi^i [x_i]$ of 
$\rL^+U_{\alpha,\cO_F}$. 
Then the composition 
\[
 \prod_{\alpha} \rL^+_{< \langle \alpha, \lambda \rangle} U_{\alpha,\cO_F} \to \rL^+ U \to \rL^+U/(\rL^+U \cap \varpi^{\lambda} \rL^+U \varpi^{-\lambda}) 
\]
is an isomorphism. Hence we have a section. 
\end{proof}

\begin{lem}\label{lem:SGrA}
Assume that $\mu$ is a dominant minuscule cocharacter and $w \in W$. 
We have an isomorphism 
\[
 S_{w\mu} \cap \Gr_{\mu} \simeq {\rL^+U}_{\ol{k}}/((\rL^+U)_{\ol{k}} \cap \varpi^{w\mu} (\rL^+U)_{\ol{k}}  \varpi^{-w\mu}) . 
\]
In particular, 
$S_{w\mu} \cap \Gr_{\mu}$ is the perfection of an affine space 
of dimension $\langle \rho, \mu +w \mu \rangle$. 
\end{lem}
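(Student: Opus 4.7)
The plan is to realize $S_{w\mu}\cap\Gr_\mu$ as a single orbit under $\rL^+U_{\ol{k}}$, identify the stabilizer, and compute the dimension via a root-by-root analysis using the minuscule hypothesis.

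First I would note that because $\mu$ is minuscule we have $\Gr_\mu=\mathring{\Gr}_\mu$, and by \cite[Proposition 1.23 (1)]{ZhuAffGmix} this is a single $\rL^+G$-orbit; in particular $[\varpi^{w\mu}]\in\Gr_\mu$, since the invariant of $\varpi^{w\mu}$ is the dominant representative $\mu$ of $w\mu$. The orbit map
\[
 \phi\colon \rL^+U_{\ol{k}}\to\Gr_{\ol{k}},\qquad u\mapsto u\cdot[\varpi^{w\mu}],
\]
lands in $S_{w\mu}\cap\Gr_\mu$ because $\rL^+U\subseteq\rL U\cap\rL^+G$.

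Next I compute the stabilizer of $[\varpi^{w\mu}]$ in $\rL^+U$. The stabilizer in $\rL G$ is $\varpi^{w\mu}(\rL^+G)\varpi^{-w\mu}$. Since $\varpi^{w\mu}$ normalises $\rL U$ (conjugation acts on each root subgroup $U_\alpha\simeq\Ga$ by scaling $f\mapsto\varpi^{\langle\alpha,w\mu\rangle}f$), the Iwasawa-type identity $\rL U\cap\rL^+G=\rL^+U$ gives
\[
 \rL^+U\cap\varpi^{w\mu}(\rL^+G)\varpi^{-w\mu}=\rL^+U\cap\varpi^{w\mu}(\rL^+U)\varpi^{-w\mu}.
\]
Thus $\phi$ factors through an injection
\[
 \ol{\phi}\colon \rL^+U/(\rL^+U\cap\varpi^{w\mu}(\rL^+U)\varpi^{-w\mu})\hookrightarrow S_{w\mu}\cap\Gr_\mu.
\]

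Then, in the spirit of the proof of Lemma \ref{lem:Ztri}, I analyse the source root by root. Fixing an enumeration of the positive roots, the big-cell factorization $\rL^+U\simeq\prod_{\alpha>0}\rL^+U_\alpha$ of schemes (compatible with conjugation by $\varpi^{w\mu}$) yields
\[
 \rL^+U/(\rL^+U\cap\varpi^{w\mu}\rL^+U\varpi^{-w\mu})\simeq\prod_{\alpha>0}W_\cO(-)/\varpi^{\max(0,\langle\alpha,w\mu\rangle)}W_\cO(-),
\]
the perfection of an affine space of dimension $\sum_{\alpha>0}\max(0,\langle\alpha,w\mu\rangle)$. The minuscule hypothesis gives $\langle\alpha,w\mu\rangle\in\{-1,0,1\}$, and combining $\max(0,x)=\tfrac{1}{2}(x+|x|)$ with the $W$-invariance of $\sum_{\alpha>0}|\langle\alpha,w\mu\rangle|$ (which equals $\langle 2\rho,\mu\rangle$ because $\mu$ is dominant) gives
\[
 \sum_{\alpha>0}\max(0,\langle\alpha,w\mu\rangle)=\langle\rho,w\mu\rangle+\langle\rho,\mu\rangle=\langle\rho,\mu+w\mu\rangle.
\]

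Finally I upgrade $\ol{\phi}$ to an isomorphism. The identity \eqref{eq:MVdim} gives $|\bMV_\mu(w\mu)|=\dim V_\mu(w\mu)=1$ for minuscule $\mu$, so $S_{w\mu}\cap\Gr_\mu$ is irreducible of dimension $\langle\rho,\mu+w\mu\rangle$. The image of $\ol{\phi}$ is an irreducible smooth locally closed subscheme of the same dimension, hence open dense. To promote density to equality I would use that for minuscule $\mu$ there is an $\rL^+G$-equivariant isomorphism $\Gr_\mu\simeq (G/P_\mu)^{\mathrm{pf}}$, under which the semi-infinite intersections $S_\lambda\cap\Gr_\mu$ for $\lambda\in W\mu$ coincide with the Schubert/Białynicki--Birula cells $UwP_\mu/P_\mu$, and these are precisely the $\rL^+U$-orbits through $T$-fixed points. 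The main obstacle is this last step: ruling out any extraneous points in $S_{w\mu}\cap\Gr_\mu$ outside the $\rL^+U$-orbit. The minuscule hypothesis and the MV-uniqueness consequence of \eqref{eq:MVdim} are essential here, as the analogous statement fails for non-minuscule $\mu$.
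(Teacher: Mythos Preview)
Your proof is correct and spells out precisely what the paper's two citations provide: \cite[(3.2.3)]{XiZhCycSh} for the orbit description and \cite[Lemma 3.2]{HaiEquconv} for the root-by-root dimension count. On your flagged ``obstacle'' (surjectivity of $\ol{\phi}$), the Bia\l{}ynicki--Birula argument you sketch works, but there is an even lighter route: by Iwasawa the $S_\lambda$ partition $\Gr_{\ol{k}}$, and for minuscule $\mu$ only $\lambda\in W\mu$ meet $\Gr_\mu$, so $\{S_{w'\mu}\cap\Gr_\mu\}_{w'\mu\in W\mu}$ and the Bruhat cells $\{U\,w'P_\mu/P_\mu\}_{w'\in W/W_{P_\mu}}$ are two partitions of $\Gr_\mu\simeq(G/P_\mu)^{\pf}$ indexed by the same set, with the termwise containment you already established, hence they coincide.
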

\begin{proof}
The first claim follows from 
\cite[(3.2.3)]{XiZhCycSh}. 
The second claim follows from the first one as in the proof of 
\cite[Lemma 3.2]{HaiEquconv}. 
\end{proof}

\begin{thm}\label{thm:edmin}
Assume that $\mu_i$ are minuscule. 
For a point $y$ of $\mathring{\Gr}_{\lambda}$, 
the fiber of 
$m_{\mu_{\bullet}} \colon \Gr_{\mu_{\bullet}} \to \Gr_{\ol{k}}$ 
at $y$ is equidimensional of dimension $\langle \rho,\lvert \mu_{\bullet} \rvert -\lambda \rangle$. 
\end{thm}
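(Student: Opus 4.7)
By Lemma \ref{lem:Ztri}, the morphism $m_{\mu_{\bullet}}$ is Zariski-locally trivial above $\mathring{\Gr}_{\lambda}$, so its fibers over distinct points of $\mathring{\Gr}_{\lambda}$ are isomorphic. I therefore fix $y = [\varpi^{\lambda}]$ and set $F := m_{\mu_{\bullet}}^{-1}(y)$; the plan is to prove by induction on $n$ that $F$ is equidimensional of dimension $\langle \rho, \lvert \mu_{\bullet} \rvert - \lambda \rangle$. The base case $n = 1$ is immediate: since $\mu_1$ is minuscule, $\Gr_{\mu_1} = \mathring{\Gr}_{\mu_1}$, so $\lambda = \mu_1$ and $F$ is a reduced point.

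For the inductive step, I will use the forgetful morphism $\pi_1 \colon \Gr_{\mu_{\bullet}} \to \Gr_{\mu_1}$, which by the construction of the convolution is a Zariski-locally trivial $\Gr_{(\mu_2, \ldots, \mu_n)}$-bundle. Stratify $\Gr_{\mu_1}$ by the semi-infinite cells $S_{\nu} \cap \Gr_{\mu_1}$ for $\nu \in W \mu_1$, each a perfection of an affine space of dimension $\langle \rho, \mu_1 + \nu \rangle$ by Lemma \ref{lem:SGrA}. Writing $\beta_1 \in S_{\nu} \cap \Gr_{\mu_1}$ as $u \varpi^{\nu}$ with $u \in \rL U$, and using dominance of $\lambda$ to rewrite $u^{-1} \varpi^{\lambda} = \varpi^{\lambda} u'$ for some $u' \in \rL U$, I identify $\pi_1^{-1}(\beta_1) \cap F$ with a fiber of $m_{(\mu_2, \ldots, \mu_n)}$ at a point lying in the semi-infinite cell $S_{\lambda - \nu}$. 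Stratifying further by the Schubert stratum $\mathring{\Gr}_{\mu'}$ containing this point and invoking the inductive hypothesis, each such piece is equidimensional of dimension $\langle \rho, (\mu_2 + \cdots + \mu_n) - \mu' \rangle$.

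The principal obstacle is the combinatorial dimension count across the strata of $F$ indexed by pairs $(\nu, \mu')$. One must control the dimension of the locus inside $S_{\nu} \cap \Gr_{\mu_1}$ where $\beta_1^{-1} \circ y$ has invariant exactly $\mu'$; this is governed by the Mirkovi\'c--Vilonen dimension formula $\dim(S_{\lambda - \nu} \cap \mathring{\Gr}_{\mu'}) = \langle \rho, \mu' + \lambda - \nu \rangle$ together with Lemma \ref{lem:SGrA}. I then need to verify that the sum of the base-locus and the fiber dimensions is uniformly bounded by $\langle \rho, \lvert \mu_{\bullet} \rvert - \lambda \rangle$, with equality exactly on the strata for which $\lambda + \nu$ is dominant and $\mu' = \lambda + \nu$, and finally that the non-top strata lie in the closures of the top-dimensional ones so that $F$ is genuinely equidimensional rather than merely bounded in dimension. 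The argument as a whole follows the strategy of Haines in \cite{HaiEquconv}, with modifications to accommodate the perfect / Witt-vector setting employed here.
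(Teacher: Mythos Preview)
Your approach is essentially the same as the paper's. The paper's proof is a single sentence deferring to the argument of \cite[Theorem 3.1]{HaiEquconv}, with Lemma \ref{lem:Ztri} and Lemma \ref{lem:SGrA} substituted for Haines's Lemma 2.1 and Lemma 3.2; your proposal spells out that inductive semi-infinite stratification argument and explicitly acknowledges following Haines, so there is no substantive difference. (One small slip: the top-dimensional strata are those with $\lambda-\nu$ dominant and $\mu'=\lambda-\nu$, not $\lambda+\nu$.)
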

\begin{proof}
This is proved in the same way as \cite[Theorem 3.1]{HaiEquconv} 
using Lemma \ref{lem:Ztri} and Lemma \ref{lem:SGrA} instead of 
\cite[Lemma 2.1 and Lemma 3.2]{HaiEquconv} respectively. 
\end{proof}

\begin{prop}\label{prop:edsumm}
Assume that each $\mu_i$ is a sum of minuscule cocharacters. 
Then, for a point $y$ of $\mathring{\Gr}_{\lambda}$, 
any irreducible component of the fiber 
$m_{\mu_{\bullet}}^{-1}(y)$ 
whose generic point belongs to 
$\mathring{\Gr}_{\mu_{\bullet}}$ 
has dimension $\langle \rho,\lvert \mu_{\bullet} \rvert -\lambda \rangle$. 
\end{prop}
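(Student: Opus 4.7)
The plan is to reduce to the minuscule case established in Theorem \ref{thm:edmin} by refinement. Write each $\mu_i = \sum_{j=1}^{k_i} \mu_{i,j}$ as a sum of minuscule dominant coweights, and let $\mu'_\bullet$ be the refined sequence $(\mu_{1,1}, \ldots, \mu_{n,k_n})$, so that $\lvert \mu'_\bullet \rvert = \lvert \mu_\bullet \rvert$. Composing consecutive blocks of modifications defines a natural morphism
\[
 \pi \colon \Gr_{\mu'_{\bullet}} \to \Gr_{\mu_\bullet}
\]
intertwining the convolution maps $m_{\mu'_\bullet}$ and $m_{\mu_\bullet}$. Two properties of $\pi$ are used. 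First, $\pi$ is surjective, because each block convolution $\Gr_{(\mu_{i,1}, \ldots, \mu_{i,k_i})} \to \Gr_{\ol{k}}$ has image equal to $\Gr_{\mu_i}$, so every tuple $(\beta_i)_i \in \Gr_{\mu_\bullet}$ admits a blockwise refinement. Second, $\pi$ has $0$-dimensional fibers over $\mathring{\Gr}_{\mu_\bullet}$: the fiber over a point $(\beta_i)_i$ splits as the product over $i$ of the fibers of the block convolutions at $\beta_i \in \mathring{\Gr}_{\mu_i}$, and Theorem \ref{thm:edmin} applied blockwise (with the role of $\lambda$ played by $\mu_i = \sum_j \mu_{i,j}$) shows each such block fiber is equidimensional of dimension $\langle \rho, \sum_j \mu_{i,j} - \mu_i \rangle = 0$.

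Fix $y \in \mathring{\Gr}_{\lambda}$, set $d = \langle \rho, \lvert \mu_\bullet \rvert - \lambda \rangle$, and let $W_1, \ldots, W_r$ be the irreducible components of $m_{\mu'_\bullet}^{-1}(y)$; each has dimension $d$ by Theorem \ref{thm:edmin}. Let $Z$ be an irreducible component of $m_{\mu_\bullet}^{-1}(y)$ whose generic point $\eta$ lies in $\mathring{\Gr}_{\mu_\bullet}$. For the upper bound, the surjectivity of $\pi$ on fibers yields $m_{\mu_\bullet}^{-1}(y) = \bigcup_i \overline{\pi(W_i)}$, each $\overline{\pi(W_i)}$ being irreducible of dimension at most $d$; the irreducibility of $Z$ then forces $Z \subset \overline{\pi(W_i)}$ for some $i$, so $\dim Z \le d$. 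For the lower bound, choose $\tilde\eta \in \pi^{-1}(\eta)$ and let $W$ be the irreducible component of $m_{\mu'_\bullet}^{-1}(y)$ containing $\tilde\eta$. The open locus $W \cap \pi^{-1}(\mathring{\Gr}_{\mu_\bullet})$ is nonempty, hence dense in $W$, and $\pi$ has $0$-dimensional fibers there; by the generic fiber dimension theorem, $\dim \overline{\pi(W)} = \dim W = d$. Since $\overline{\pi(W)}$ is irreducible and contains $\eta$, it is contained in the irreducible component $Z$, giving $\dim Z \ge d$.

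The main technical point that must be nailed down is the existence and basic structure of $\pi$: that block composition genuinely defines a morphism $\Gr_{\mu'_\bullet} \to \Gr_{\mu_\bullet}$, which amounts to verifying that a composite of modifications of types $\preceq \mu_{i,j}$ has total invariant $\preceq \sum_j \mu_{i,j} = \mu_i$, and that the fiber of $\pi$ over a point of $\mathring{\Gr}_{\mu_\bullet}$ factors as the claimed product of block fibers in a manner compatible with Theorem \ref{thm:edmin}. Once this is in place, the rest is formal: surjectivity of $\pi_y$ handles the upper bound, and generic fiber dimensions handle the lower bound.
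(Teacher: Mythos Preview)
Your argument is correct and is essentially the approach the paper has in mind: the paper's proof simply says ``This follows from Theorem~\ref{thm:edmin} in the same way as \cite[Proposition~4.1]{HaiEquconv},'' and Haines's Proposition~4.1 is precisely the refinement-to-minuscule argument you have written out---factor each $\mu_i$ into minuscules, observe that the refinement map $\pi$ is surjective with $0$-dimensional fibers over $\mathring{\Gr}_{\mu_\bullet}$ (the latter coming from the minuscule case applied blockwise), and transfer equidimensionality of the upstairs fiber to the relevant downstairs components via the upper/lower bound argument. Your identification of the one technical point (well-definedness of $\pi$ and the product decomposition of its fibers over the open cell) is also accurate; once that is in place the rest is, as you say, formal.
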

\begin{proof}
This follows from Theorem \ref{thm:edmin} 
in the same way as \cite[Proposition 4.1]{HaiEquconv}. 
\end{proof}

\section{Affine Deligne--Lusztig variety}\label{sec:AffDL}
Recall that $L=W_{\cO_F}(\ol{k})[\frac{1}{\varpi}]$. 
Let $b \in G(L)$ and $\mu \in \bX_{\bullet}(T)$.  
Let $\sigma$ denote the $q$-th power Frobenius element. 
We define the affine Deligne--Lusztig variety $X_{\mu}(b)$ 
by 
\[
 X_{\mu}(b) = \{ g (\rL^+G)_{\ol{k}}  \in \Gr_{\ol{k}} \mid g^{-1} b \sigma (g) \in \overline{ (\rL^+ G)_{\ol{k}} \varpi^{\mu} (\rL^+ G)_{\ol{k}} } \}. 
\]
Let $B(G)$ be the set of $\sigma$-conjugacy classes of $G(L)$. 
We define $B(G,\mu) \subset B(G)$ as in 
\cite[6.2]{KotIsoII}. 
Then $X_{\mu}(b)$ is non-empty if and only if 
$[b] \in B(G,\mu)$ by \cite[Theorem 5.1]{GasConjKR}. 

An element of $B(G)$ is called unramified if 
it is contained in the image of the natural map 
$B(T) \to B(G)$. 
Let $B(G)_{\ur}$ denote the set of unramified elements of $B(G)$. 

For $\chi \in \bX^{\bullet}(T)$, 
we put 
\[
 \ol{\chi} = \frac{1}{\lvert 
 \langle \sigma \rangle 
 \chi \rvert} \sum_{\chi' \in \langle \sigma \rangle  \chi} \chi' \in  \bX^{\bullet}(T)_{\bQ}^{\sigma} . 
\]
The natural pairing $\bX_{\bullet}(T) \times \bX^{\bullet}(T) \to \bZ$ induces a pairing 
$\langle \ , \ \rangle \colon \bX_{\bullet}(T)_{\sigma} \times \bX^{\bullet}(T)_{\bQ}^{\sigma} \to \bQ$. 
We put 
\[
 \bX_{\bullet}(T)_{\sigma}^+ = 
 \{ [\lambda] \in \bX_{\bullet}(T)_{\sigma} \mid \textrm{$\langle [\lambda] , \ol{\alpha} \rangle \geq 0$ for every  $\alpha \in \Delta$} \}. 
\]
Then we have the bijection 
\[
 \bX_{\bullet}(T)_{\sigma}^+ \simeq 
 B(G)_{\ur};\ [\lambda] \mapsto [\varpi^{\lambda}] 
\]
as in \cite[Lemma 4.2.3]{XiZhCycSh}. 

For $\tau \in \bX_{\bullet}(T)$, we write 
$X_{\mu} (\tau)$ for $X_{\mu} (\varpi^{\tau})$. 
We assume that $b=\varpi^{\tau}$ 
for $\tau \in \bX_{\bullet}(T)$ 
such that $[\tau] \in \bX_{\bullet}(T)_{\sigma}^+$. 
We can define the twisted centralizer $J_{\tau}$ over $\cO_F$ 
for $\varpi^{\tau}$ 
as in \cite[4.2.13]{XiZhCycSh}. 
We note that $J_{\tau}=G$ if $[b] \in B(G)_{\ur}$ is basic.

We assume \cite[Hypothesis 4.4.1]{XiZhCycSh} for $J_{\tau}$. 
Further, we assume that $Z_G$ is connected. 

Let $\lambda \in  \bX_{\bullet}(T)$ such that 
$[\lambda]=[\tau] \in \bX_{\bullet}(T)_{\sigma}^+$. 
We take $\delta_{\lambda} \in \bX_{\bullet}(T)$ 
such that 
$\lambda =\tau +\delta_{\lambda} -\sigma (\delta_{\lambda})$. 
Let $\bfb \in \bMV_{\mu}(\lambda)$. 
Consider the condition $\textrm{C}_{\bfb}$ for $\nu \in \bX_{\bullet}(T)$ that 
\[
\textrm{$\lambda +\nu -\sigma (\nu)$ is dominant and $\bfb$ is in the image of $i_{\nu}^{\bMV} \colon \bS_{(\nu,\mu)|\lambda +\nu} \to \bMV_{\mu} (\lambda)$.} 
\]
By \cite[Lemma 4.4.3]{XiZhCycSh}, we take $\nu_{\bfb} \in \bX_{\bullet}(T)$ such that $\nu_{\bfb}$ satisfies $\textrm{C}_{\bfb}$ and $\nu -\nu_{\bfb}$ is dominant for any $\nu \in \bX_{\bullet}(T)$ satisfying $\textrm{C}_{\bfb}$. Such a $\nu_{\bfb}$ is unique up to $\bX_{\bullet}(Z_G)$ by the same lemma. 
We put $\tau_{\bfb} =\lambda +\nu_{\bfb} -\sigma (\nu_{\bfb})$. 
Then we have the isomorphism 
\[
 J_{\tau} (F) \simeq J_{\tau_{\bfb}} (F);\ 
 g \mapsto \varpi^{\delta_{\lambda} +\nu_{\bfb}} g 
 \varpi^{-\delta_{\lambda} -\nu_{\bfb}}. 
\]
We consider the isomorphism 
\begin{equation}\label{eq:ttbiso}
 X_{\mu} (b) = 
 X_{\mu} (\tau) \simeq X_{\mu} (\tau_{\bfb}) ; \ 
 g \rL^+ G \mapsto  
 \varpi^{\delta_{\lambda} +\nu_{\bfb}} 
 g \rL^+ G . 
\end{equation}
Let $\bfa \in \bS_{(\nu_{\bfb},\mu)|\lambda +\nu_{\bfb}}$ 
be the unique element such that 
$\bfb =i_{\nu_{\bfb}}^{\bMV} (\bfa)$.

We define $X_{\mu,\nu_{\bfb}}(\tau_{\bfb})$ by 
the fiber product 
 \[
 \xymatrix{
 X_{\mu,\nu_{\bfb}}(\tau_{\bfb})  \ar[rr]^{}  \ar[d] &     &     \Gr^{0}_{(\nu_\bfb, \mu) | \tau_{\bfb} + \sigma( \nu_\bfb) }              \ar[d]^{\mathrm{pr}_1 \times m} \\ 
 \Gr_{\nu_\bfb}  \ar[rr]^-{1 \times \varpi^{\tau_{\bfb}} \sigma }   &  &  \Gr_{\nu_\bfb} \times \Gr_{\tau_{\bfb} + \sigma( \nu_\bfb ) }.} 
\]
More concretely, we have 
\[
 X_{\mu,\nu_{\bfb}}(\tau_{\bfb}) = \{ g (\rL^+G)_{\ol{k}}  \in \Gr_{\nu_{\bfb}} \mid g^{-1} \varpi^{\tau_{\bfb}} \sigma (g) \in \overline{ (\rL^+ G)_{\ol{k}} \varpi^{\mu} (\rL^+ G)_{\ol{k}} } \}.
\]
Further, we define $X_{\mu,\nu_{\bfb}}^{\bfa}(\tau_{\bfb})$ by 
the fiber product 
 \[
 \xymatrix{
 X_{\mu,\nu_{\bfb}}^{\bfa}(\tau_{\bfb})  \ar[rr]^{}  \ar[d] &     &     \Gr^{0,\bfa}_{(\nu_\bfb, \mu) | \tau_{\bfb} + \sigma( \nu_\bfb) }              \ar[d] \\ 
 X_{\mu,\nu_{\bfb}}(\tau_{\bfb})  \ar[rr]   &  &  \Gr^{0}_{(\nu_\bfb, \mu) | \tau_{\bfb} + \sigma( \nu_\bfb) }.} 
\]
Let $x_0$ denote $[1] \in J_{\tau}(F)/J_{\tau}(\cO_F)$. 
We put 
\[
 \mathring{X}_{\mu}^{\bfb,x_0}(\tau_{\bfb}) = 
 X_{\mu,\nu_{\bfb}}^{\bfa}(\tau_{\bfb}) \cap 
 \mathring{\Gr}_{\nu_{\bfb}}. 
\]
Let $X_{\mu}^{\bfb,x_0}(\tau_{\bfb})$ 
denote the closure of 
$\mathring{X}_{\mu}^{\bfb,x_0}(\tau_{\bfb})$ 
in $X_{\mu,\nu_{\bfb}}^{\bfa}(\tau_{\bfb})$. 
The scheme $X_{\mu}^{\bfb,x_0}(\tau_{\bfb})$ is irreducible of dimension $\langle \rho, \mu -\tau_{\bfb} \rangle$ by \cite[Theorem 4.4.5]{XiZhCycSh}. 

By \cite[Theorem 4.4.14]{XiZhCycSh}, 
there is a bijection between the set
\[
 \bigsqcup_{\lambda \in  \bX_{\bullet}(T),\ [\lambda]=[\tau] \in \bX_{\bullet}(T)_{\sigma}^+}  \bMV_{\mu}(\lambda) \times J_{\tau}(F)/J_{\tau}(\cO_F) 
\] 
and the set of irreducible components of $X_{\mu}(b)$ given by 
\[
 (\bfb,[g]) \mapsto X_{\mu}^{\bfb,[g]}(\tau_{\bfb}) 
 \coloneqq 
 g X_{\mu}^{\bfb,x_0}(\tau_{\bfb}), 
\]
where we regard $X_{\mu}^{\bfb,[g]}(\tau_{\bfb})$ 
as a subscheme of $X_{\mu}(b)$ by \eqref{eq:ttbiso}.

\section{Unitary group}\label{sec:Uni}

\subsection{Setting}
Let $F_2$ be the quadratic unramified extension of $F$. Let $\cO_{F_2}$ denote the ring of integers of $F_2$. 
Let $\varpi$ be a uniformizer of $F$. 
We put $\Lambda=\cO_{F_2}^n$ equipped with the hermitian form 
\begin{equation}\label{eq:HermOpair}
 \cO_{F_2}^n \times \cO_{F_2}^n \to \cO_{F_2};\ 
 ((a_i)_{1 \leq i \leq n},(a_i')_{1 \leq i \leq n}) 
 \mapsto \sum_{i=1}^n \sigma (a_i) a_{n+1-i}' . 
\end{equation}
We put $G=\GU (\Lambda)$. 
By taking the first factor of the decomposition 
\[
 \cO_{F_2} \otimes_{\cO_F} \cO_{F_2} \simeq \cO_{F_2} \times \cO_{F_2};\ 
 a \otimes b \mapsto (ab,a\sigma(b)) , 
\]
we have an isomorphism 
\begin{equation}\label{eq:punisp}
 G_{\cO_{F_2}} \simeq \GL_n \times \Gm . 
\end{equation}
We put $V=\Lambda \otimes_{\cO_F} F$. 
Let $\widehat{G} = \GL_n \times \Gm$ denote the dual group over $\ol{\bQ}_{\ell}$ 
with a maximal torus $\widehat{T}$ and a Borel subgroup $\widehat{B}$, 
which are the diagonal torus and the upper triangular subgroup on the $\GL_n$-component. 
For $\mu \in \bX_{\bullet}(T)^+$, let 
$\mu^* \in \bX_{\bullet}(T)^+$ be the element such that 
$V_{\mu^*}=V_{\mu}^*$. 

For an index $i \in \{1, \dots, n \}$, we will use the notation $i^\vee = n + 1 - i$. The group $\mathbb{X}^{\bullet}( \widehat{T})$ has a basis $\{ \varepsilon_i \}_{i=0}^n$, where 
$\varepsilon_0$ is the projection to the $\Gm$-component and 
$\varepsilon_i$ is the character of $\widehat{T}$ given by evaluating the $(i,i)$ entry for $i \geq 1$. In the following, all cocharacters of $T$ (equivalently, characters of $\widehat{T}$) will be written according to this basis. 
We have 
$\sigma (\varepsilon_0)=\sum_{i=0}^n \varepsilon_i$ and 
$\sigma (\varepsilon_i)=-\varepsilon_{i^{\vee}}$ for $1 \leq i \leq n$. 
For $\mu =\sum_{i=0}^n m_i \varepsilon_i \in \bX_{\bullet}(T)^+$, we have \[
 \mu^* = -m_0 \varepsilon_0 - \sum_{i=1}^n m_{n+1-i} \varepsilon_i \in \bX_{\bullet}(T)^+. 
\]
\subsection{Satake cycle}
Let $\mu =\varepsilon_0 + \varepsilon_1 +\varepsilon_2 \in \bX_{\bullet}(T)$. 
We put $r=[n/2]$. 
We put 
\[
 \nu_i=\varepsilon_1 + \cdots + \varepsilon_{i-1} 
 -\varepsilon_{i^{\vee}} - \cdots - \varepsilon_{1^{\vee}}, \quad 
 \tau_i=\varepsilon_0 
\]
for $1 \leq i \leq [(n-1)/2]$, and 
\[
 \nu_r=\varepsilon_1 + \cdots + \varepsilon_{r-1} , 
 \quad 
 \tau_r=\varepsilon_0 +\varepsilon_1 + \cdots + \varepsilon_n 
\]
if $n$ is even. 
We put $\lambda_i=-\varepsilon_0 -\varepsilon_i -\varepsilon_{i^{\vee}}$ for $1 \leq i \leq r$. 

\begin{lem}\label{lem:MVnz}
For $\lambda \in -\varepsilon_0 + (1 - \sigma)\bX_{\bullet}(T)$, 
we have 
$\bMV_{\mu^*}(\lambda) \neq \emptyset$ if and only if 
$\lambda \in \{ \lambda_1, \ldots , \lambda_r \}$. 
Further, $\bMV_{\mu^*}(\lambda_i)$ is a singleton for $1 \leq i \leq r$. 
\end{lem}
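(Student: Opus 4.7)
The plan is to invoke \eqref{eq:MVdim}, which reduces the statement to computing which weights of the $\wh{G}$-representation $V_{\mu^*}$ lie in the coset $\varepsilon_0 + (1-\sigma)\bX_\bullet(T)$ along with their multiplicities in $V_{\mu^*}$. Since $\mu$ is minuscule, so is $\mu^*$, and hence $V_{\mu^*}$ is a Weyl-orbit representation: every weight occurs with multiplicity one, and the weight set is the $W_{\wh{G}} = S_n$-orbit of $\mu^*$, where $S_n$ permutes $\{ \varepsilon_1, \ldots, \varepsilon_n \}$ and fixes $\varepsilon_0$. This gives a completely explicit list of weights in the basis $\{ \varepsilon_i \}$ to compare against.

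The next step is to characterize $(1-\sigma)\bX_\bullet(T)$ in the same basis. From $\sigma(\varepsilon_0) = \varepsilon_0$ and $\sigma(\varepsilon_i) = -\varepsilon_{i^\vee}$ for $i \geq 1$ we read off $(1-\sigma)\varepsilon_0 = 0$ and $(1-\sigma)\varepsilon_i = \varepsilon_i + \varepsilon_{i^\vee}$. Consequently a vector $\sum_k c_k \varepsilon_k$ belongs to $(1-\sigma)\bX_\bullet(T)$ if and only if $c_0 = 0$, $c_i = c_{i^\vee}$ for every $i \geq 1$, and, when $n$ is odd, the middle coefficient $c_{(n+1)/2}$ is even. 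Imposing this palindromic-plus-parity condition on each weight of $V_{\mu^*}$ translated by $-\varepsilon_0$ singles out exactly the $\lambda_i$ for $1 \leq i \leq r$: an unordered pair $\{ i, j \}$ with $i < j$ produces a $\sigma$-symmetric coefficient pattern only when $j = i^\vee$, which forces $i \leq r$; and when $n$ is odd the unpaired middle index is excluded because its coefficient would be odd.

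Combining the two steps with \eqref{eq:MVdim} finishes both assertions simultaneously: for $\lambda = \lambda_i$ the weight space $V_{\mu^*}(\lambda_i)$ is one-dimensional, so $\lvert \bMV_{\mu^*}(\lambda_i) \rvert = 1$, while for any other $\lambda$ in the coset the corresponding weight space vanishes and hence $\bMV_{\mu^*}(\lambda) = \emptyset$. The only non-trivial step is the elementary combinatorial identification in the middle paragraph, and no substantive technical obstacle is anticipated.
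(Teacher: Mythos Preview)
Your proposal is correct and follows essentially the same approach as the paper: both invoke \eqref{eq:MVdim}, identify the weights of $V_{\mu^*}$ as the Weyl orbit $\{\varepsilon_0 - \varepsilon_i - \varepsilon_j : 1 \le i < j \le n\}$ with multiplicity one (by minusculeness), and then check which of these lie in the coset $\varepsilon_0 + (1-\sigma)\bX_\bullet(T)$. Your explicit description of $(1-\sigma)\bX_\bullet(T)$ via the palindromic and parity conditions is a slightly more detailed justification of the paper's one-line assertion that $j = i^\vee$ is forced, but the argument is the same.
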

\begin{proof}
For $\lambda \in \bX_{\bullet}(T)$, 
we have $\dim V_{\mu^*} (\lambda) \leq 1$, and 
$V_{\mu^*} (\lambda)$ is nonzero if and only if 
$\lambda=-\varepsilon_0 -\varepsilon_i - \varepsilon_j$ for some $1 \leq i < j \leq n$. 
If $-\varepsilon_0-\varepsilon_i - \varepsilon_j \in -\varepsilon_0 +(1 - \sigma)\bX_{\bullet}(T)$ 
for some $1 \leq i < j \leq n$, we must have $j=i^{\vee}$. 
Hence the claim follows from \eqref{eq:MVdim}. 
\end{proof}

Let $1 \leq i \leq r$. 
Note that $\tau_i^* +\sigma (\nu_i^*)= \nu_i^* +\lambda_i =\nu_i+\tau_i^*$. 
Let $\bfb_i$ be the unique element of 
$\bMV_{\mu^*}(\lambda_i)$. 
There is $\bfa_i \in  \mathbb{S}_{(\nu_i^*,\mu^*)|\nu_i+\tau_i^*}$ such that 
$i_{\nu_i^*}^{\mathbb{MV}} (\bfa_i)=\bfb_i$. 
Since $i_{\nu_i^*}^{\mathbb{MV}}$ is injective by \cite[Lemma 3.2.7]{XiZhCycSh}, the set $\mathbb{S}_{(\nu_i^*,\mu^*)|\nu_i+\tau_i^*}$ is also a singleton.

We study the Satake cycle 
$\Gr^{0,\bfa_i}_{(\nu^*_i, \mu^*) | \nu_i+\tau_i^* }$. 
\begin{lem}\label{lem:Gracl}
\begin{enumerate}
\item\label{en:ringGr}
The scheme 
$\mathring{\Gr}^{0}_{(\nu^*_i, \mu^*) | \nu_i +\tau_i^* }$ 
is irreducible. 
\item\label{en:Gra} 
We have 
\[
 \Gr^{0,\bfa_i}_{(\nu^*_i, \mu^*) | \nu_i +\tau_i^*}=\ol{\mathring{\Gr}^{0}_{(\nu^*_i, \mu^*) | \nu_i +\tau_i^*}}. 
\]
\end{enumerate}
\end{lem}
\begin{proof}
Since $\mathbb{S}_{(\nu_i^*,\mu^*)|\nu_i+\tau_i^*}$ is a singleton, by Lemma \ref{lem:SatCyCl}, 
we know that there is only one irreducible component of $\mathring{\Gr}^{0}_{(\nu^*_i, \mu^*) | \nu_i +\tau_i^* }$ whose dimension is equal to the dimension of $\mathring{\Gr}^{0}_{(\nu^*_i, \mu^*) | \nu_i +\tau_i^* }$. For the claim \ref{en:ringGr}, 
it remains to show that $\mathring{\Gr}^{0}_{(\nu^*_i, \mu^*) | \nu_i +\tau_i^* }$ is equidimensional. 
By the definition, 
$\mathring{\Gr}^{0}_{(\nu^*_i, \mu^*) | \nu_i +\tau_i^* }$ is equal to the inverse image of $\mathring{\Gr}_{\nu_i +\tau_i^* }$ 
under the convolution morphism 
\[
 m_{(\nu^*_i, \mu^*)} \colon 
 \Gr_{(\nu^*_i, \mu^*)} \to \Gr. 
\]
Therefore the equidimensionality of $\mathring{\Gr}^{0}_{(\nu^*_i, \mu^*) | \nu_i +\tau_i^* }$ follows from 
Lemma \ref{lem:Ztri} and
Proposition \ref{prop:edsumm}.  
The claim \ref{en:Gra} follows from \ref{en:ringGr}. 
\end{proof}

We do not use the following lemma in the sequel, 
but it shows that a study of 
intersections of irreducible components of affine Deligne--Lusztig varieties is 
more subtle than intersections of Satake cycles. 

\begin{lem}\label{lem:Scon}
Assume that $n \geq 5$. 
\begin{enumerate}
\item\label{en:tran12}
The actions of $\rL^+G$ on 
$\mathring{\Gr}^{0}_{(\nu^*_2, \mu^*) | \nu_1 +\tau_1^*}$ and 
$\Gr^{0}_{(\nu^*_1, \mu^*) | \nu_1 +\tau_1^*}$ are 
transitive. 
\item\label{en:cont12} 
The Satake cycle 
$\Gr^{0,\bfa_2}_{(\nu^*_2, \mu^*) | \nu_2 +\tau_2^*}$ contains 
$\Gr^{0,\bfa_1}_{(\nu^*_1, \mu^*) | \nu_1 +\tau_1^*}$. 
\end{enumerate}
\end{lem}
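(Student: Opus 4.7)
The plan is to attack both parts through the second projection to $\Gr_{\nu_1}$, which is a single $\rL^+G$-orbit: since $\nu_1$ is minuscule we have $\Gr_{\nu_1} = \mathring{\Gr}_{\nu_1}$, and \cite[Proposition 1.23(1)]{ZhuAffGmix} applies. This immediately reduces part \ref{en:tran12} to showing that the stabilizer of the basepoint $[\varpi^{\nu_1}]$ acts transitively on its fiber, and turns part \ref{en:cont12} into a containment of irreducible closed subschemes that can be certified by a single specialization.

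For the fiber computation in part \ref{en:tran12} I would work lattice-theoretically. Because $\mu^*$ is minuscule, a modification $\cE_1 \dashrightarrow \cE^0$ of invariant $\mu^*$ is encoded by a $2$-dimensional subspace $W$ of an $n$-dimensional $\ol{k}$-vector space, while the basepoint $[\varpi^{\nu_1}]$ is encoded by a line $\bar U$ in the same ambient space. The exact-invariant condition $= \nu_2^*$ on the remaining modification $\cE_2 \to \cE_1$ (resp.\ the relaxed condition $\preceq \nu_1^*$ in the second scheme) then translates into a concrete combinatorial condition relating $W$ to $\bar U$. The stabilizer $\rL^+G \cap \varpi^{\nu_1} \rL^+G \varpi^{-\nu_1}$, which preserves $\bar U$ and the surrounding flag structure, visibly acts transitively on the resulting parameter space of admissible $W$. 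The main technical obstacle is precisely this fiber analysis: since $\nu_2^*$ is non-minuscule one must carefully enumerate the relative positions that can occur and verify that only a single $\rL^+G$-orbit appears in each case.

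For part \ref{en:cont12} I would first record the Bruhat comparisons $\nu_1^* \preceq \nu_2^*$ (since $\nu_2^* - \nu_1^* = \varepsilon_2 - \varepsilon_n$ is visibly a sum of simple coroots) and $\nu_1 \preceq \nu_2$ (since $\nu_2 - \nu_1 = \varepsilon_1 - \varepsilon_{n-1}$ likewise). These yield Schubert inclusions $\Gr_{\nu_1^*} \subset \Gr_{\nu_2^*}$ and $\Gr_{\nu_1} \subset \Gr_{\nu_2}$, hence a closed embedding $\Gr^0_{(\nu_1^*, \mu^*) | \nu_1} \hookrightarrow \Gr^0_{(\nu_2^*, \mu^*) | \nu_2}$. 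By part \ref{en:tran12} the source is irreducible, and $\Gr^{0, \bfa_2}_{(\nu_2^*, \mu^*) | \nu_2} = \overline{\mathring{\Gr}^0_{(\nu_2^*, \mu^*) | \nu_2}}$ by the preceding lemma, so it suffices to exhibit a single point of the source that lies in this closure. I would produce such a specialization by writing down an explicit one-parameter family of lattice chains whose generic fiber has relative positions exactly $(\nu_2^*, \mu^*)$ with composition exactly $\nu_2$, and whose special fiber is a chosen point of $\Gr^0_{(\nu_1^*, \mu^*) | \nu_1}$; the degeneration of the invariants from $(\nu_2^*, \nu_2)$ to $(\nu_1^*, \nu_1)$ is then a routine consequence of the semicontinuity of relative positions combined with the irreducibility already established.
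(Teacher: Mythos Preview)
Your approach to part \ref{en:tran12} is essentially the paper's: reduce along the projection to $\Gr_{\nu_1}$ and analyze the fiber under the stabilizer of $[\varpi^{\nu_1}]$. The paper packages the fiber computation as a single double-coset count, observing that the $(\rL^+G)_{\nu_1}$-orbits on $m^{-1}([\varpi^{\nu_1}])$ are in bijection with $(P_{\nu_1,\cO_L})_{\ol{k}} \backslash G_{\ol{k}} / (P_{\mu,\cO_L})_{\ol{k}}$, which has exactly two elements; since $\mathring{\Gr}^0_{(\nu_2^*,\mu^*)|\nu_1}$ and $\Gr^0_{(\nu_1^*,\mu^*)|\nu_1}$ are two nonempty $\rL^+G$-stable pieces partitioning $\Gr^0_{(\nu_2^*,\mu^*)|\nu_1}$, each must be a single orbit. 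This is the abstract form of the hands-on enumeration you outline. (One labeling slip in your sketch: in the paper's conventions the modification $\cE_1 \dashrightarrow \cE^0$ carries the invariant $\preceq \nu_2^*$, while it is $\cE_2 \dashrightarrow \cE_1$ that is bounded by the minuscule $\mu^*$; your parametrization by a $2$-plane $W$ should sit on the latter arrow.)

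For part \ref{en:cont12} your route genuinely diverges. You propose to exhibit an explicit one-parameter degeneration from $\mathring{\Gr}^0_{(\nu_2^*,\mu^*)|\nu_2}$ into $\Gr^0_{(\nu_1^*,\mu^*)|\nu_1}$ and then invoke transitivity on the target. The paper instead avoids any construction: Lemma \ref{lem:SatSurj} gives surjectivity of $\Gr^{0,\bfa_2}_{(\nu_2^*,\mu^*)|\nu_2} \to \Gr_{\nu_2}$, hence the Satake cycle already meets the locus lying over $\Gr_{\nu_1} \subset \Gr_{\nu_2}$, namely $\Gr^0_{(\nu_2^*,\mu^*)|\nu_1}$; a short case split (does it meet the open stratum $\mathring{\Gr}^0_{(\nu_2^*,\mu^*)|\nu_1}$ or only its complement $\Gr^0_{(\nu_1^*,\mu^*)|\nu_1}$?) together with \emph{both} transitivity statements from \ref{en:tran12} then forces the desired containment in either case. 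Your strategy is sound in principle, but the promised family of lattice chains is never written down, so an actual step remains; the paper's argument trades that construction for the surjectivity lemma and is correspondingly cleaner.
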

\begin{proof}
We show \ref{en:tran12}. 
It suffices to show that the number of 
the orbits under the action of $\rL^+G$ on 
$\Gr^{0}_{(\nu^*_2, \mu^*) | \nu_1 +\tau_1^*}$ is $2$. 
Let $(\rL^+G)_{\nu_1 +\tau_1^*}$ be the stabilizer of 
$[\varpi^{\nu_1+\tau_1^*}] \in \Gr_{\nu_1+\tau_1^*}$ in $\rL^+G$. 
Since the action of $\rL^+G$ on $\Gr_{\nu_1 +\tau_1^*}$ is transitive, 
it suffices to show that the number of the orbits in 
$m_{(\nu^*_2, \mu^*) | \nu_1 +\tau_1^* }^{-1}([\varpi^{\nu_1 +\tau_1^*}])$ 
under the action of $(\rL^+G)_{\nu_1 +\tau_1^*}$ 
is $2$. 
These orbits are in a bijection with $(P_{\nu_1 +\tau_1^*,\cO_L})_{\ol{k}} \backslash G_{\ol{k}} / (P_{\mu,\cO_L})_{\ol{k}}$. 
Hence the number of the orbits is $2$. 

We show \ref{en:cont12}. 
By Lemma \ref{lem:SatSurj}, the natural morphism 
$\Gr^{0,\bfa_2}_{(\nu^*_2, \mu^*) | \nu_2+\tau_2^*} \to \Gr_{\nu_2+\tau_2^*}$ is surjective. 
Hence the intersection of $\Gr^{0,\bfa_2}_{(\nu^*_2, \mu^*) | \nu_2+\tau_2^*}$ and $\Gr^{0}_{(\nu^*_2, \mu^*) | \nu_1 +\tau_1^*}$ is not empty. 

If the intersection of $\Gr^{0,\bfa_2}_{(\nu^*_2, \mu^*) | \nu_2+\tau_2^*}$ and $\mathring{\Gr}^{0}_{(\nu^*_2, \mu^*) | \nu_1 +\tau_1^*}$ is not empty, 
then $\Gr^{0,\bfa_2}_{(\nu^*_2, \mu^*) | \nu_2 +\tau_2^*}$ contains  $\mathring{\Gr}^{0}_{(\nu^*_2, \mu^*) | \nu_1 +\tau_1^*}$ 
because $\rL^+G$ acts transitively on $\mathring{\Gr}^{0}_{(\nu^*_2, \mu^*) | \nu_1 +\tau_1^*}$ and $\mathring{\Gr}^{0}_{(\nu^*_2, \mu^*) | \nu_2 +\tau_2^*}$ is stable under the action of $\rL^+G$. Then $\Gr^{0,\bfa_2}_{(\nu^*_2, \mu^*) | \nu_2 +\tau_2^*}$ contains $\Gr^{0}_{(\nu^*_1, \mu^*) | \nu_1 +\tau_1^*}$, since  $\mathring{\Gr}^{0}_{(\nu^*_2, \mu^*) | \nu_1 +\tau_1^*}$ is dense in $\Gr^{0}_{(\nu^*_2, \mu^*) | \nu_1 +\tau_1^*}$. 

If the intersection of $\Gr^{0,\bfa_2}_{(\nu^*_2, \mu^*) | \nu_2 +\tau_2^*}$ and $\mathring{\Gr}^{0}_{(\nu^*_2, \mu^*) | \nu_1 +\tau_1^*}$ is empty, 
the intersection of $\Gr^{0,\bfa_2}_{(\nu^*_2, \mu^*) | \nu_2 +\tau_2^*}$ and $\Gr^{0}_{(\nu^*_1, \mu^*) | \nu_1 +\tau_1^*}$ is not empty. Then 
$\Gr^{0,\bfa_2}_{(\nu^*_2, \mu^*) | \nu_2 +\tau_2^*}$ contains $\Gr^{0}_{(\nu^*_1, \mu^*) | \nu_1 +\tau_1^*}$ because 
$\rL^+G$ acts transitively on $\Gr^{0}_{(\nu^*_1, \mu^*) | \nu_1 +\tau_1^*}$ and $\mathring{\Gr}^{0}_{(\nu^*_2, \mu^*) | \nu_2 +\tau_2^*}$ is stable under the action of $\rL^+G$. 
\end{proof}

\begin{rem}\label{rem:quesXZ}
Lemma \ref{lem:Scon} \ref{en:cont12} shows that 
$X^{\bfa_2}_{\mu^*,\nu^*_2}(\tau_2^*)$ is not irreducible. 
This answers a question in \cite[Remark 4.4.6 (3)]{XiZhCycSh}. 
\end{rem}

\section{Irreducible Components}\label{sec:Irr}

We note that 
$[\varpi^{-\varepsilon_0}] \in B(G , \mu^*)$ is the basic class. 
Let $X_{\mu^*}^{\bfb_i,x_0}(\tau_i^*)$ be 
the closure in $X_{\mu^*}(\tau_i^*)$ of 
$\mathring{X}_{\mu^*}^{\bfb_i,x_0}(\tau_i^*)$ fitting in the cartesian diagram 
 \[
\xymatrix{
\mathring{X}_{\mu^*}^{\bfb_i, x_0}(\tau_i^*)  \ar[rr]^{}  \ar[d] &     &     \mathring{\Gr}^{0,\bfa_i}_{(\nu^*_i, \mu^*) | \nu_i+\tau_i^* }              \ar[d]^{\mathrm{pr}_1 \times m} \\ 
\mathring{\Gr}_{\nu^*_i}  \ar[rr]^-{1 \times \varpi^{\tau^*_i} \sigma }   &  &  \mathring{\Gr}_{\nu^*_i} \times \mathring{\Gr}_{\nu_i+\tau_i^*}  . \\
}\]
By results in \S \ref{sec:AffDL} and Lemma \ref{lem:MVnz}, we obtain the following proposition: 
\begin{prop}\label{prop:irralluni}
The number of the $G(F)$-orbits of the irreducible components of 
$X_{\mu^*}(\varepsilon_0^*)$ is $r$. 
Representatives of $r$ orbits are 
given by $X_{\mu^*}^{\bfb_i,x_0}(\tau_i^*)$ for $1 \leq i \leq r$. 
The $G(F)$-orbit of $X_{\mu^*}^{\bfb_i,x_0}(\tau_i^*)$ is parametrized by 
$G(F)/G(\cO_F)$. 
The dimension of $X_{\mu^*}^{\bfb_i,x_0}(\tau_i^*)$ 
is $\langle \rho, \mu^* - \tau_i^* \rangle=n-2$. 
\end{prop}

If $i = 1$, the above construction defines a Deligne--Lusztig variety. 
If $i = 2$ and $n \geq 5$, 
this defines a variety that is not a Deligne--Lusztig variety.

By \eqref{eq:Grbc} and \eqref{eq:punisp}, 
we have an isomorphism 
\begin{equation}\label{eq:GrGsp}
 \Gr_G \otimes_{\bF_q} \bF_{q^2} \simeq \Gr_{\GL_n \times \Gm}.     
\end{equation}
We put $\cE^0=\cO_{F_2}^n$ and $\cL^0=\cO_{F_2}$ and view them as trivial vector bundles 
on $D_{\bF_{q^2}}$. We have an isomorphism 
\begin{equation}\label{eq:Fdualid}
 \cE^0 \simeq \oF ((\cE^0)^{\vee}) 
\end{equation}
given by \eqref{eq:HermOpair}. 
For any perfect $\bF_{q^2}$-algebra $R$, 
\begin{equation}\label{eq:GrnGm}
 \Gr_{\GL_n \times \Gm} (R) = \left\{ 
 (\cE, \cL, \beta, \beta') \relmiddle{|} 
 \begin{tabular}{ l }
  $\cE$ is a vector bundle on $D_R$ of rank $n$, \\ 
  $\cL$ is a line bundle on $D_R$, \\ 
  $\beta \colon \cE|_{D^*_R} \simeq \cE^0|_{D^*_R}$ and \\ 
  $\beta' \colon \cL|_{D^*_R} \simeq \cL^0|_{D^*_R}$ are trivializations. 
  \end{tabular} 
\right\} 
\end{equation}
by \eqref{eq:Grmod}. 
Under the identification by \eqref{eq:GrGsp}, 
the Frobenius endomorphism of 
$\Gr_G \otimes_{\bF_q} \bF_{q^2}$ sends $(\cE, \cL, \beta, \beta')$ 
in \eqref{eq:GrnGm} to 
\[
 (\oF (\cE^{\vee}) \otimes \cL, \cL, \oF (\beta^{\vee})^{-1} \otimes \beta', \beta')
\]
where 
\[
 \oF (\beta^{\vee})^{-1} \otimes \beta' \colon 
 (\oF (\cE^{\vee}) \otimes \cL)|_{D^*_R} \simeq (\oF ((\cE^0)^{\vee}) \otimes \cL^0)|_{D^*_R} 
 \simeq \cE^0 |_{D^*_R}  
\]
using \eqref{eq:Fdualid} at the last isomorphism. 
We regard $\Gr_{\GL_n}$ as an open and closed sub-ind-scheme of $\Gr_{\GL_n \times \Gm}$ 
by 
\[
 (\cE,\beta) \mapsto (\cE, \cL^0, \beta, \id) . 
\]
If $\lambda \in \bX_{\bullet}(T)$ is trivial on $\Gm$-component 
under the identification \eqref{eq:punisp}, 
then we view $\Gr_{G,\lambda}$ as a subscheme of 
$\Gr_{\GL_n} \subset \Gr_{\GL_n \times \Gm}$ under the identification 
\eqref{eq:GrGsp}. 
Under the identification by \eqref{eq:GrGsp}, 
the Frobenius endomorphism of 
$\Gr_G \otimes_{\bF_q} \bF_{q^2}$ becomes 
\[
 (\cE,\beta) \mapsto (\oF (\cE^{\vee}) , \oF (\beta^{\vee})^{-1}) 
\]
in $\Gr_{\GL_n}$. 
When we compare the positions two vector bundles on $D_{R}$ equipped with trivializations over $D_{R}^*$, they are compared through the trivializations. 
We put $\mu_{\GL}=\varepsilon_1 +\varepsilon_2$.

\subsection{Component for $\nu_1$}
The following proposition describes an analogue of a component studied in \cite{VoWeGUII}. 

\begin{prop}\label{prop:nu1}
The irreducible component 
$X_{\mu^*}^{\bfb_1,x_0}(\tau_1^*)$ is parametrized by $\cE \dashrightarrow \cE^0$ bounded by 
$\nu_1^*$ such that 
$\varpi \oF (\cE^{\vee}) \subset \cE$. 
In particular, it is isomorphic to 
$X_{I_{n}^{1}}([1])^{\pf}$. 
\end{prop}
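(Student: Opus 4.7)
The plan is to unfold the cartesian diagram defining $\mathring{X}_{\mu^*}^{\bfb_1,x_0}(\tau_1^*)$, translate the Frobenius-twist compatibility into a single lattice inclusion, and match the resulting moduli problem with the unitary Coxeter Deligne--Lusztig variety $X_{I_n^{n-1}}([1])^{\pf}$.

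First I would note that $\nu_1 = -\varepsilon_n$ is minuscule, so $\mathring{\Gr}_{\nu_1^*} = \Gr_{\nu_1^*}$ and its $R$-points parametrize modifications $\beta_1\colon \cE\dashrightarrow\cE^0$ with $\cE$ differing from $\cE^0$ by a single line; this identifies $\Gr_{\nu_1^*}^{\pf}$ with $(\bP^{n-1})^{\pf}$, where $\cE$ is encoded by a hyperplane $H\subset\cE^0/\varpi\cE^0 = \ol{k}^n$. A point of $\mathring{X}_{\mu^*}^{\bfb_1,x_0}(\tau_1^*)$ then consists of such a $\cE$ together with a lift to the Satake cycle $\mathring{\Gr}^{0,\bfa_1}_{(\nu_1^*,\mu^*)|\tau_1^*+\sigma(\nu_1^*)}$ whose convolution agrees with $\varpi^{\tau_1^*}\sigma(\beta_1)$.

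Next I would translate this lift into lattice data using the explicit formula for the Frobenius endomorphism of $\Gr_G\otimes_{\bF_q}\bF_{q^2}$ recalled before the proposition, which sends $(\cE,\beta)$ essentially to $(\oF(\cE^\vee),\oF(\beta^\vee)^{-1})$. Combined with the twist by $\tau_1^* = -\varepsilon_0$ (which contributes a factor of $\varpi$ on the lattice side), this forces the intermediate object in the convolution to be $\varpi\oF(\cE^\vee)$, and the existence of a type-$\mu^*$ modification $\varpi\oF(\cE^\vee)\dashrightarrow\cE$ becomes the plain lattice inclusion $\varpi\oF(\cE^\vee)\subset\cE$. The main content here is to show this inclusion is also sufficient to uniquely lift $\cE$ back to the Satake cycle: for $i=1$, the set $\bS_{(\nu_1^*,\mu^*)|\lambda_1+\nu_1^*}$ is a singleton (from $|\bMV_{\mu^*}(\lambda_1)|=1$ via Lemma \ref{lem:MVnz} and injectivity of $i_{\nu_1^*}^{\bMV}$), and by Lemma \ref{lem:SatSurj} together with $\rL^+G$-equivariance the Satake cycle surjects onto $\Gr_{\lambda_1+\nu_1^*}$ with singleton fibers over the open locus, so the compatibility imposes nothing beyond the inclusion.

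Finally, I would identify the resulting closed subscheme of $\Gr_{\nu_1^*}^{\pf}$ with $X_{I_n^{n-1}}([1])^{\pf}$. Under the hyperplane parametrization $\cE\leftrightarrow H$, the lattice $\varpi\oF(\cE^\vee)$ corresponds via the hermitian pairing on $\Lambda$ to $\oF(H^\perp)\subset\ol{k}^n$, so the inclusion $\varpi\oF(\cE^\vee)\subset\cE$ becomes $\oF(H^\perp)\subset H$, which is exactly the Coxeter Deligne--Lusztig condition of type $(I_n^{n-1},[1])$ for the unitary Frobenius. Since this condition is already closed in $\Gr_{\nu_1^*}^{\pf}$, the locus $\mathring{X}_{\mu^*}^{\bfb_1,x_0}(\tau_1^*)$ coincides with its closure, giving the stated isomorphism. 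The main obstacle will be the Satake reduction step: verifying that the Satake cycle imposes no extra constraint beyond the plain lattice inclusion relies crucially on the singleton nature of $\bS_{(\nu_1^*,\mu^*)|\lambda_1+\nu_1^*}$ at $i=1$, a simplification that fails for $i\geq 2$ and is precisely what forces the more elaborate Demazure-type construction carried out in later sections.
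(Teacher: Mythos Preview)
Your overall strategy matches the paper's: reduce the Satake-cycle constraint to a vacuous one, translate the defining fiber product into the lattice inclusion $\varpi F(\cE^\vee)\subset\cE$, and identify the result with the Deligne--Lusztig variety. The lattice translation and the final identification (you use the hyperplane $H=\cE/\varpi\cE^0$, the paper uses the dual line $\cE^\vee/\cE^0$) are fine and equivalent.

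Where your argument diverges from the paper is the Satake-cycle step, and here it is imprecise. You invoke Lemma~\ref{lem:SatSurj} to get that $\Gr^{0,\bfa_1}_{(\nu_1^*,\mu^*)|\tau_1^*+\sigma(\nu_1^*)}$ surjects onto $\Gr_{\tau_1^*+\sigma(\nu_1^*)}$ ``with singleton fibers over the open locus.'' But the convolution $m$ does \emph{not} have singleton fibers (there are many intermediate lattices factoring a given modification); it is only $\pr_1\times m$ that is injective, and Lemma~\ref{lem:SatSurj} says nothing about that map. Surjectivity of the Satake cycle onto the base together with $\rL^+G$-equivariance still does not force $\Gr^{0,\bfa_1}=\Gr^0$, since a priori $\Gr^0$ could have several $\rL^+G$-stable components over $\mathring{\Gr}_{\tau_1^*+\sigma(\nu_1^*)}$.

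The paper closes this in one line, and you should too: because $\nu_1^*$, $\mu^*$, and $\tau_1^*+\sigma(\nu_1^*)$ are all minuscule, one has $\Gr^0_{(\nu_1^*,\mu^*)|\tau_1^*+\sigma(\nu_1^*)}=\mathring{\Gr}^0_{(\nu_1^*,\mu^*)|\tau_1^*+\sigma(\nu_1^*)}$ directly. Then equidimensionality (Theorem~\ref{thm:edmin}) together with $|\bS|=1$ shows $\Gr^0$ is irreducible and hence equals its unique Satake cycle $\Gr^{0,\bfa_1}$. This immediately gives $X_{\mu^*}^{\bfb_1,x_0}(\tau_1^*)=\mathring{X}_{\mu^*}^{\bfb_1,x_0}(\tau_1^*)=X_{\mu^*,\nu_1^*}(\tau_1^*)$, with no closure needed and no appeal to Lemma~\ref{lem:SatSurj}.
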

\begin{proof}
We have 
\[
 \Gr^{0,\bfa_1}_{(\nu^*_1, \mu^*) | \nu_1 +\tau_1^*}
 =\Gr^{0}_{(\nu^*_1, \mu^*) | \nu_1 +\tau_1^*}
 =\mathring{\Gr}^{0}_{(\nu^*_1, \mu^*) | \nu_1 +\tau_1^*} 
\] 
since $\nu_1$ is minuscule. 
Hence we have 
$X_{\mu^*}^{\bfb_1,x_0}(\tau_1^*)=\mathring{X}_{\mu^*}^{\bfb_1,x_0}(\tau_1^*)=X_{\mu^*,\nu_1^*}(\tau_1^*)$. 
By the definition, 
$X_{\mu^*,\nu_1^*}(\tau_1^*)$ 
is parametrized by 
$\cE \dashrightarrow \cE^0$ bounded by 
$\nu_1^*$ such that 
$\oF (\cE^{\vee}) \dashrightarrow \cE$ 
is bounded by $\mu_{\GL}^*$. 
We note that the last condition is equivalent to that 
$\cE \dashrightarrow \oF (\cE^{\vee})$ 
is bounded by $\mu_{\GL}$. 
If $\cE \dashrightarrow \cE^0$ is bounded by 
$\nu_1^*=\varepsilon_1$, then $\cE^0 \dashrightarrow \oF (\cE^{\vee})$ is also bounded by 
$\varepsilon_1$. 
Therefore, when $\cE \dashrightarrow \cE^0$ is bounded by 
$\nu_1^*$, 
the condition that 
$\cE \dashrightarrow \cE^0 \dashrightarrow \oF (\cE^{\vee})$ 
is bounded by $\mu_{\GL}$ is equivalent to 
$\cE \subset \oF (\cE^{\vee}) \subset \varpi^{-1}\cE$. 
This is further equivalent to 
$\varpi \oF (\cE^{\vee}) \subset \cE$, since we already have 
$\cE \subset \cE^0 \subset \oF (\cE^{\vee})$. 

The last isomorphism in the claim 
is given by sending 
$\cE$ to 
$\oF (\cE^{\vee}) /\cE^0 \subset \frac{1}{\varpi} \cE^0/\cE^0$, where we have $\cE^0 \subset \oF (\cE^{\vee})$ by $\cE \subset \cE^0$ and \eqref{eq:Fdualid}.  
\end{proof}

By Proposition \ref{prop:nu1} and Example \ref{eq:Fermat}, $X_{\mu^*}^{\bfb_1,x_0}(\tau_1^*)$ 
is isomorphic to 
the perfection of the Fermat hypersurface 
defined by 
\[
\sum_{i=1}^n x_i x_{n+1-i}^q=0 
\]
in $\bP^{n-1}$. 

\subsection{Components for $\nu_r$ when $n$ is even.}
The following proposition describes a generalization of a component studied in \cite{HoPaGU22}. 

\begin{prop}\label{prop:nur}
Assume that $n$ is even. The irreducible component 
$X_{\mu^*}^{\bfb_r,x_0}(\tau_r^*)$ is 
parametrized by 
$\cE \dashrightarrow \cE^0$ bounded by 
$\nu_r^*$ such that 
$\varpi \cE \subset \oF (\cE^{\vee})$. 
In particular, it is isomorphic to 
$X_{I_{n}^{r-1}}([1])^{\mathrm{pf}}$. 
\end{prop}
\begin{proof}
We have 
\[
 \Gr^{0,\bfa_r}_{(\nu^*_r, \mu^*) | \nu_r+\tau^*_r }
 =\Gr^{0}_{(\nu^*_r, \mu^*) | \nu_r+\tau^*_r }
 =\mathring{\Gr}^{0}_{(\nu^*_r, \mu^*) | \nu_r+\tau^*_r } 
\] 
since $\nu_r^*$ and $\nu_r+\tau^*_r$ are minuscule. Hence we have 
$X_{\mu^*}^{\bfb_r,x_0}(\tau_r^*)=\mathring{X}_{\mu^*}^{\bfb_r,x_0}(\tau_r^*)=X_{\mu^*,\nu_r^*}(\tau_r^*)$. 

By the definition, 
$X_{\mu^*,\nu_r^*}(\tau_r^*)$ 
is parametrized by 
$\cE \dashrightarrow \cE^0$ bounded by 
$\nu_r^*$ such that 
$\frac{1}{\varpi} \oF (\cE^{\vee}) \dashrightarrow \cE$ 
is bounded by $\mu_{\GL}^*$, because the $\GL_n$-component of $\tau_r^*(\varpi)$ is the scalar matrix $\varpi^{-1}$. 
The condition that 
$\frac{1}{\varpi} \oF (\cE^{\vee}) \dashrightarrow \cE$ 
is bounded by $\mu_{\GL}^*$ is equivalent to 
$\varpi \cE \subset \oF (\cE^{\vee})$. 

The last isomorphism in the claim 
is given by sending 
$\cE$ to 
$\cE/\cE^0 \subset \frac{1}{\varpi} \cE^0/\cE^0$. 
\end{proof}

\begin{eg}
Assume that $n=4$. Then 
$X_{\mu^*}^{\bfb_r,x_0}(\tau_r^*)$ is isomorphic to 
the perfection of the Fermat hypersurface 
defined by 
\[
 x_1 x_4^q + x_2 x_3^q + x_3 x_2^q +x_4 x_1^q =0 
\]
in $\bP^3$. 
This is a component which appears in \cite[p.1689]{HoPaGU22}. 
\end{eg}

\subsection{Non-minuscule case}\label{sebsec:nonmin}

Let $2 \leq i \leq [(n-1)/2]$. 
We put 
\[
 \nu_{i,+} =\varepsilon_1 + \cdots +\varepsilon_{i-1}, \quad 
 \nu_{i,-} =-\varepsilon_{i^{\vee}}-\cdots-\varepsilon_{1^{\vee}}. 
\]
We put $\xi_i = \varepsilon_1 + \cdots + \varepsilon_{2i-1}$. 
For a description of $\cE$ parametrizing $X_{\mu^*}^{\bfb_i,x_0}(\tau_i^*)$, 
we introduce an auxiliary space parametrizing modifications $\cE_- \dashrightarrow \cE^0$ and $\cE^0 \dashrightarrow \cE_+$ bounded by minuscule cocharacters such that $\cE_- \dashrightarrow \cE$ and $\cE \dashrightarrow \cE_+$ are bounded by minuscule cocharacters. 
Let $(\Gr_{\nu_{i,+}^*} \times \Gr_{\nu_{i,-}^*})_{\xi_i}$ 
be the subspace of 
$\Gr_{\nu_{i,+}^*} \times \Gr_{\nu_{i,-}^*}$ 
defined by the condition that 
\[
 \cE_- \stackrel{\beta_-}{\dashrightarrow} \cE^0 \stackrel{\beta_+^{-1}}{\dashrightarrow} \cE_+ 
\]
is bounded by $\xi_i$ 
for a point 
$(\cE_+ \stackrel{\beta_+}{\dashrightarrow} \cE^0, 
 \cE_- \stackrel{\beta_-}{\dashrightarrow} \cE^0)$ 
of 
$\Gr_{\nu_{i,+}^*} \times \Gr_{\nu_{i,-}^*}$. 
Let 
\[
 (\Gr_{(\nu_{i,+}^*,\nu_{i,-}^*)} \times_{\Gr_{\nu_i^*}} \Gr_{(\nu_{i,-}^*,\nu_{i,+}^*)})_{\xi_i}
 \] 
be the subspace of 
$\Gr_{(\nu_{i,+}^*,\nu_{i,-}^*)} \times_{\Gr_{\nu_i^*}} \Gr_{(\nu_{i,-}^*,\nu_{i,+}^*)}$ 
defined by the condition that 
\[
 \cE_- \stackrel{\beta_-}{\dashrightarrow} \cE^0 \stackrel{\beta_+^{-1}}{\dashrightarrow} \cE_+ 
\]
is bounded by $\xi_i$ for a point
\[
 (\cE \stackrel{\beta_+'}{\dashrightarrow}
 \cE_+ \stackrel{\beta_+}{\dashrightarrow} \cE^0, \cE \stackrel{\beta_-'}{\dashrightarrow}
 \cE_- \stackrel{\beta_-}{\dashrightarrow} \cE^0)
\]
of $\Gr_{(\nu_{i,+}^*,\nu_{i,-}^*)} \times_{\Gr_{\nu_i^*}} \Gr_{(\nu_{i,-}^*,\nu_{i,+}^*)}$. 
We have a natural morphism 
\[
 p_1 \colon 
 (\Gr_{(\nu_{i,+}^*,\nu_{i,-}^*)} \times_{\Gr_{\nu_i^*}} \Gr_{(\nu_{i,-}^*,\nu_{i,+}^*)})_{\xi_i} \to 
 (\Gr_{\nu_{i,+}^*} \times \Gr_{\nu_{i,-}^*})_{\xi_i}. 
\]

Let $\sV_i$ be the vector bundle of rank $2i-1$ over 
$(\Gr_{\nu_{i,+}^*} \times \Gr_{\nu_{i,-}^*})_{\xi_i}$ 
defined by $\cE_+ /\cE_-$, where 
$(\cE_+, \cE_-)$ is a point of 
$(\Gr_{\nu_{i,+}^*} \times \Gr_{\nu_{i,-}^*})_{\xi_i}$. 
We put $\sG_i=\Aut (\sV_i)$. 
Let 
$t_{i,\bZ} \in \mathrm{Oc}(\mathrm{Dyn}(\GL_{2i-1,\bZ}))(\bZ)$ 
be the image under 
\[
  \mathbf{t}(\bZ) \colon \Par (\GL_{2i-1,\bZ})(\bZ) \to \mathrm{Oc}(\mathrm{Dyn}(\GL_{2i-1,\bZ}))(\bZ)  
\]
of the parabolic subgroup of $\GL_{2i-1,\bZ}$ 
defined as the stabilizer of 
$\bZ^{i-1} \subset \bZ^{i-1} \oplus \bZ^i = \bZ^{2i-1}$. 
Let 
\begin{equation}\label{eq:tidef}
 t_i \in \mathrm{Oc}(\mathrm{Dyn}(\sG_i))((\Gr_{\nu_{i,+}^*} \times \Gr_{\nu_{i,-}^*})_{\xi_i})
\end{equation}
be the element determined from $t_{i,\bZ}$ by 
Remark \ref{rem:vecig} and Lemma \ref{lem:igiso} \ref{en:Wphi}. 

We define a morphism 
\[
 \Psi \colon (\Gr_{(\nu_{i,+}^*,\nu_{i,-}^*)} \times_{\Gr_{\nu_i^*}}  \Gr_{(\nu_{i,-}^*,\nu_{i,+}^*)})_{\xi_i} \to 
  \Par_{t_i} (\sG_i) 
\]
by sending 
\[
 (\cE \stackrel{\beta_+'}{\dashrightarrow}
 \cE_+ \stackrel{\beta_+}{\dashrightarrow} \cE^0, \cE \stackrel{\beta_-'}{\dashrightarrow}
 \cE_- \stackrel{\beta_-}{\dashrightarrow} \cE^0)
\]
to the stabilizer of 
$\cE/\cE_- \subset \cE_+ / \cE_-$, 
where we have the inclusions $\cE_- \subset \cE \subset \cE_+$ 
because $\beta_+'$ and $\beta_-'$ are bounded by $\nu_{i,-}^*$ and $\nu_{i,+}^*$ respectively. 
Then $\Psi$ is an isomorphism. 
Note that a natural morphism 
\[
 p_0 \colon 
 (\Gr_{(\nu_{i,+}^*,\nu_{i,-}^*)} \times_{\Gr_{\nu_i^*}} \Gr_{(\nu_{i,-}^*,\nu_{i,+}^*)})_{\xi_i} \to \Gr_{\nu_i^*} 
\]
is an isomorphism over $\mathring{\Gr}_{\nu_i^*}$.

Recall that 
$X_{\mu^*,\nu_i^*}^{\bfa_i}(\tau_i^*)$ 
and $X_{\mu^*,\nu_i^*}(\tau_i^*)$ 
are closed subspaces of 
$\Gr_{\nu_i^*}$. 
The condition for the subspace 
\[
 p_0^{-1} (X_{\mu^*,\nu_i^*}(\tau_i^*)) \subset (\Gr_{(\nu_{i,+}^*,\nu_{i,-}^*)} \times_{\Gr_{\nu_i^*}} \Gr_{(\nu_{i,-}^*,\nu_{i,+}^*)})_{\xi_i} 
\] 
is that 
$\cE \subset \oF (\cE^{\vee}) \subset \frac{1}{\varpi} \cE$. 

For a point $(\cE,\cE_+, \cE_-)$ of 
$(\Gr_{(\nu_{i,+}^*,\nu_{i,-}^*)} \times_{\Gr_{\nu_i^*}} \Gr_{(\nu_{i,-}^*,\nu_{i,+}^*)})_{\xi_i}$, 
we put $\sW=\cE/\cE_- \subset \cE_+ /\cE_-$, 
which is a subvector bundle of rank $i-1$ since $\cE \dashrightarrow
 \cE_-$ is bounded by $\nu_{i,+}^*$. 
Let $\sW^{\perp} \subset \cE_-^{\vee} /\cE_+^{\vee}$ be the annihilator of $\sW$. 
Then we have $\sW^{\perp}=\cE^{\vee}/\cE_+^{\vee}$. 

Let $Y_i$ be the closed subscheme of 
$(\Gr_{\nu_{i,+}^*} \times \Gr_{\nu_{i,-}^*})_{\xi_i}$ 
defined by the conditions 
\begin{enumerate}
\item 
$\cE_+ \subset \oF (\cE_-^{\vee})$, 
\item 
$\cE_- \subset \oF (\cE_+^{\vee})$, 
\item
$\varpi \oF (\cE_-^{\vee}) \subset \cE_-$. 
\end{enumerate}
Then we have $Y_i=X_{I_{n}^{i-1,n-i}}([1])^{\mathrm{pf}}$ under the identification given by sending 
$(\cE_+,\cE_-)$ to 
\[
 0 \subset \varpi\cE_+/\varpi \cE^0 \subset \cE_-/\varpi \cE^0 \subset \cE^0/\varpi \cE^0. 
\]

Assume that $(\cE_+, \cE_-)$ is a point of $Y_i$. 
The condition 
$\cE \subset \oF (\cE^{\vee})$ is equivalent to that 
the image of 
$\sW$ under the natural morphism 
\begin{equation}\label{eq:phi1def}
 \phi_1 \colon \cE_+/\cE_- \to \oF (\cE_-^{\vee})/\oF (\cE_+^{\vee})=\oF (\cE_-^{\vee}/\cE_+^{\vee}) 
\end{equation}
is contained in $\oF (\sW^{\perp})$. 
The condition 
$\oF (\cE^{\vee}) \subset \frac{1}{\varpi} \cE$ 
is equivalent to that 
the image of 
$\oF (\varpi \sW^{\perp})$ under the natural morphism 
\[
 \phi_2 \colon \cE^0 /\varpi \cE^0 \to \cE_+/\cE_- 
\]
is contained in $\sW$. 
We put 
\[
 X_i = p_0^{-1} (X_{\mu^*,\nu_i^*}(\tau_i^*)) \cap 
 p_1^{-1}(Y_i). 
\]
Then $X_i$ is the subscheme of 
$p_1^{-1}(Y_i)$ cut out by the conditions
\begin{align}
\phi_1 (\sW) \subset \oF (\sW^{\perp}), \label{eq:cdphi1} \\ 
\phi_2 (\oF (\varpi \sW^{\perp})) \subset \sW . \label{eq:cdphi2}
\end{align}

Let $p_0'$ and $p_1'$ be the restrictions of 
$p_0$ and $p_1$ to $X_i$ respectively. 
We have 
\[
 \xymatrix{
 p_0^{-1} (X_{\mu^*,\nu_i^*}(\tau_i^*))   \ar[d]_-{p_0} &  
 X_i \ar[l] \ar[r] \ar[dr]_-{p_1'} 
 \ar[ld]^-{p_0'}  & 
  p_1^{-1}(Y_i) \ar[d]^-{p_1} \\ 
 X_{\mu^*,\nu_i^*}(\tau_i^*) 
 & & Y_i  .
 }
\]
We note that $p_0$ and $p_0'$ are 
isomorphisms over 
$\mathring{X}_{\mu^*}^{\bfb_i,x_0}(\tau_i^*)$.

\begin{lem}\label{lem:ImZi}
The inverse image 
$p_0^{-1}(\mathring{X}_{\mu^*}^{\bfb_i,x_0}(\tau_i^*))$ is contained in $X_i$. 
\end{lem}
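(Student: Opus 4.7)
The containment
$\pi_0^{-1}(\mathring{X}_{\mu^*}^{\bfb_i,x_0}(\tau_i^*))
\subset \pi_0^{-1}(X_{\mu^*,\nu_i^*}(\tau_i^*))$
is automatic from the chain
$\mathring{X}_{\mu^*}^{\bfb_i,x_0}(\tau_i^*) \subset
X_{\mu^*,\nu_i^*}^{\bfa_i}(\tau_i^*) \subset
X_{\mu^*,\nu_i^*}(\tau_i^*)$, so the only content of the
lemma is to verify that the $\pi_1$-image of
$\pi_0^{-1}(\mathring{X}_{\mu^*}^{\bfb_i,x_0}(\tau_i^*))$
lands in $Y_i$. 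The plan is as follows: since $\pi_0$ is
an isomorphism over $\mathring{\Gr}_{\nu_i^*}$ and
$\mathring{X}_{\mu^*}^{\bfb_i,x_0}(\tau_i^*) \subset
\mathring{\Gr}_{\nu_i^*}$, each such $\cE$ lifts uniquely
to a triple $(\cE,\cE_+,\cE_-)$, and I must check that the
resulting $(\cE_+,\cE_-)$ satisfies the three defining
inclusions of $Y_i$.

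The first substantive step will be to give an intrinsic
description of $\cE_\pm$. At the distinguished point
$[\varpi^{\nu_i^*}]$ I will write
$\cE = \varpi^{\nu_i^*}\cE^0$ in the standard basis and
unwind $\nu_{i,+}^*$ (with $i-1$ entries equal to $-1$) and
$\nu_{i,-}^*$ (with $i$ entries equal to $+1$); a direct
lattice-theoretic calculation then shows that the unique
factorizations of types $(\nu_{i,+}^*,\nu_{i,-}^*)$ and
$(\nu_{i,-}^*,\nu_{i,+}^*)$ are
$\cE_+ = \cE \cap \cE^0$ and $\cE_- = \cE + \cE^0$
respectively. Since $\rL^+G$ fixes $\cE^0$, acts transitively
on $\mathring{\Gr}_{\nu_i^*}$, and preserves the relevant
modification types, and since intersection and sum of
lattices commute with its action, the identifications
\[
\cE_+ = \cE \cap \cE^0, \qquad \cE_- = \cE + \cE^0
\]
will extend to every $\cE \in \mathring{\Gr}_{\nu_i^*}$.

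With these identifications in hand, the remaining step is
a formal manipulation. From
$\cE \in X_{\mu^*,\nu_i^*}(\tau_i^*)$ one has, as recalled
just above the statement,
$\cE \subset F(\cE^\vee) \subset \tfrac{1}{\varpi}\cE$.
Using self-duality of $\cE^0$ under the hermitian form
(so $(\cE^0)^\vee = \cE^0$) and the fact that $F$ fixes the
trivial bundle $\cE^0$, I will compute
\[
F(\cE_+^\vee) = F(\cE^\vee) + \cE^0, \qquad
F(\cE_-^\vee) = F(\cE^\vee) \cap \cE^0,
\]
after which the three defining inclusions of $Y_i$ follow
at once:
$\cE \cap \cE^0 \subset F(\cE^\vee) \cap \cE^0$ gives
$\cE_+ \subset F(\cE_-^\vee)$;
$\cE + \cE^0 \subset F(\cE^\vee) + \cE^0$ gives
$\cE_- \subset F(\cE_+^\vee)$;
and $\varpi F(\cE_-^\vee) \subset \varpi F(\cE^\vee) \subset
\cE \subset \cE_-$ gives the third. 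The hard part is the
intrinsic identification of $\cE_\pm$ in the first step; the
remaining verification is a formal exercise and, notably,
does not need to invoke the Satake-cycle constraint
$\bfa_i$ beyond the fact that it forces
$\cE \in X_{\mu^*,\nu_i^*}(\tau_i^*)$.
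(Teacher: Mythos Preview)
Your overall plan is the same as the paper's: over $\mathring{\Gr}_{\nu_i^*}$ the map $\pi_0$ is an isomorphism, so one identifies $(\cE_+,\cE_-)$ intrinsically in terms of $\cE$ and $\cE^0$ and then verifies the three defining inclusions of $Y_i$ using $\cE \subset F(\cE^\vee) \subset \tfrac{1}{\varpi}\cE$. The paper in fact only spells out condition~(3); your explicit check of (1) and (2) is a welcome addition.

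However, you have interchanged $\cE_+$ and $\cE_-$. By the paper's conventions for $\Gr_{\mu_\bullet}$ (the \emph{first} entry of $\mu_\bullet$ records the type of the intermediate-to-$\cE^0$ step), the point of $\Gr_{(\nu_{i,+}^*,\nu_{i,-}^*)}$ has intermediate $\cE_+ \in \Gr_{\nu_{i,+}^*}$, hence $\cE^0 \subset \cE_+$; likewise $\cE_- \subset \cE^0$. On the open cell one therefore gets
\[
\cE_+ = \cE + \cE^0,\qquad \cE_- = \cE \cap \cE^0,
\]
exactly as the paper states in its proof (``Then we have $\cE_- = \cE \cap \cE^0$''). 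With your swapped labels, your checks of (1) and (2) happen to survive (they just trade places), but your argument for (3) does not: in the correct notation your chain verifies the harmless inclusion $\varpi F(\cE_+^\vee)=\varpi(F(\cE^\vee)\cap\cE^0)\subset \cE + \cE^0 = \cE_+$, which is \emph{not} condition~(3) of $Y_i$. The genuine condition~(3) is
\[
\varpi F(\cE_-^\vee)=\varpi\bigl(F(\cE^\vee)+\cE^0\bigr)\ \subset\ \cE\cap\cE^0=\cE_-,
\]
and your chain ``$\varpi F(\cE_-^\vee)\subset \varpi F(\cE^\vee)$'' fails here since $F(\cE^\vee)+\cE^0 \not\subset F(\cE^\vee)$ in general. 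The fix is precisely the paper's computation: $\varpi F(\cE^\vee)\subset \cE$ by $F(\cE^\vee)\subset\tfrac{1}{\varpi}\cE$, and $\varpi\cE^0\subset\cE$ because $\nu_i^*$ has all entries $\leq 1$, so $\varpi(F(\cE^\vee)+\cE^0)\subset\cE$; combined with $\varpi(F(\cE^\vee)+\cE^0)\subset\cE^0$ (which holds since $\cE_-\supset\varpi\cE^0$ forces $\cE_-^\vee\subset\tfrac{1}{\varpi}\cE^0$), this gives the required inclusion into $\cE\cap\cE^0$. Once you correct the labels and use this argument for (3), your write-up agrees with the paper's.
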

\begin{proof}
Let $(\cE,\cE_+,\cE_-)$ be a point of 
$p_0^{-1}(\mathring{X}_{\mu^*}^{\bfb_i,x_0}(\tau_i^*))$. 
Then we have $\cE_-=\cE \cap \cE^0$. 
By the condition 
$\oF (\cE^{\vee}) \subset \frac{1}{\varpi} \cE$, 
we have 
\[
 \varpi \oF (\cE_-^{\vee})= \varpi \oF ((\cE \cap \cE^0)^{\vee}) =\varpi (\oF (\cE^{\vee}) + \cE^0) \subset \cE . 
\]
Hence we have 
\[
 \varpi \oF (\cE_-^{\vee}) \subset \cE \cap \cE^0 =\cE_-. 
\]
This means that $(\cE_+,\cE_-)$ is a point of $Y_i$. 
\end{proof}

Let $\sG_{Y_i}$ denote the restriction of $\sG_i$ to $Y_i$. 
We have an isomorphism 
\begin{equation}\label{eq:isoGr}
 \Psi_{Y_i} \colon p_1^{-1}(Y_i) \simeq \Par_{t_i}(\sG_{Y_i})
\end{equation}
induced by $\Psi$.

\begin{thm}\label{thm:pZi}
The closed subscheme 
$X_i \subset p_1^{-1}(Y_i) \simeq \Par_{t_i}(\sG_{Y_i})$ 
is defined by the condition 
$\phi_1 (\sW) \subset \oF (\sW^{\perp})$. 
\end{thm}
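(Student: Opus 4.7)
The plan is to show that, over $\pi_1^{-1}(Y_i)$, condition \eqref{eq:cdphi2} is automatically satisfied, so that $X_i$ is cut out from $\pi_1^{-1}(Y_i) \simeq \Par_{t_i}(\sG_{Y_i})$ by condition \eqref{eq:cdphi1} alone. The translation already established in the paper just before the theorem shows that \eqref{eq:cdphi2} amounts to the lattice inclusion $\varpi F(\cE^{\vee}) \subset \cE$ for the intermediate lattice $\cE$ with $\cE_- \subset \cE \subset \cE_+$, so it suffices to verify this inclusion for every point of $\pi_1^{-1}(Y_i)$.

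The key observation is that this is forced by the third defining condition of $Y_i$, namely $\varpi F(\cE_-^{\vee}) \subset \cE_-$. Starting from $\cE_- \subset \cE$, taking duals gives $\cE^{\vee} \subset \cE_-^{\vee}$; Frobenius preserves inclusions of $W_{\cO}(R)$-lattices, hence $F(\cE^{\vee}) \subset F(\cE_-^{\vee})$; multiplying by $\varpi$ and then invoking the $Y_i$-condition together with $\cE_- \subset \cE$ yields
\[
 \varpi F(\cE^{\vee}) \subset \varpi F(\cE_-^{\vee}) \subset \cE_- \subset \cE,
\]
which is precisely the lattice form of \eqref{eq:cdphi2}.

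Since each step of this chain (dualization of $W_{\cO}(R)$-lattices, Frobenius pullback, and multiplication by $\varpi$) is functorial in the perfect $\bF_{q^2}$-algebra $R$, the inclusion holds on $R$-points for every such $R$. Therefore the closed subscheme of $\pi_1^{-1}(Y_i)$ defined by \eqref{eq:cdphi2} coincides with the whole ambient space $\pi_1^{-1}(Y_i)$, and the scheme-theoretic intersection with the closed locus defined by \eqref{eq:cdphi1} gives the claimed description of $X_i$.

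There is no substantive obstacle: the entire proof is the four-step chain of inclusions above, using only condition (3) of $Y_i$. The conceptual content of the theorem is that this third $Y_i$-condition already absorbs the lower bound of the affine Deligne--Lusztig chain $\varpi F(\cE^{\vee}) \subset \cE \subset F(\cE^{\vee})$, so that only the upper bound \eqref{eq:cdphi1} survives as a nontrivial orthogonality condition on the parabolic parameter $\sW \subset \sV_i$.
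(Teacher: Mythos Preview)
Your proof is correct and follows essentially the same approach as the paper: both show that condition \eqref{eq:cdphi2} is automatic on $\pi_1^{-1}(Y_i)$ by deriving the lattice inclusion $\varpi F(\cE^{\vee}) \subset \cE_- \subset \cE$ from the defining condition $\varpi F(\cE_-^{\vee}) \subset \cE_-$ of $Y_i$ together with $\cE_- \subset \cE$. Your chain $\cE^{\vee} \subset \cE_-^{\vee} \Rightarrow \varpi F(\cE^{\vee}) \subset \varpi F(\cE_-^{\vee}) \subset \cE_-$ is in fact slightly more direct than the paper's version, which first rewrites the $Y_i$-condition as $\varpi F^{-1}(\cE_-^{\vee}) \subset \cE_-$ and then dualizes the inclusion $\varpi F^{-1}(\cE_-^{\vee}) \subset \cE$; the content is identical.
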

\begin{proof}
It suffices to show that 
the condition \eqref{eq:cdphi2} 
is automatic. 
The condition \eqref{eq:cdphi2} is equivalent to 
$\varpi \oF (\cE^{\vee}) \subset \cE$ under \eqref{eq:isoGr}. 
Let $(\cE,\cE_+,\cE_-)$ be a point of 
$p_1^{-1}(Y_i)$. Then we have 
\[
 \varpi \oF^{-1}(\cE_-^{\vee}) \subset \cE_- 
 \subset \cE . 
\]
By taking the dual, we have 
$\varpi \oF (\cE^{\vee}) \subset \cE_-$. 
Hence 
the condition 
$\varpi \oF (\cE^{\vee}) \subset \cE$ 
is satisfied. 
\end{proof}

\begin{prop}\label{prop:Xopen}
The scheme 
$\mathring{X}_{\mu^*}^{\bfb_i,x_0}(\tau_i^*)$ 
is isomorphic to the subscheme of 
$p_1^{-1}(Y_i)$ 
defined by the condition 
$\cE \subset \oF (\cE^{\vee})$ and 
$\cE \cap \cE^0 =\cE_-$. 
\end{prop}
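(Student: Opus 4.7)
My plan is to transfer the description of $\mathring{X}_{\mu^*}^{\bfb_i,x_0}(\tau_i^*)$ from $\Gr_{\nu_i^*}$ to $\pi_1^{-1}(Y_i)$ via the morphism $\pi_0$. Since $\pi_0$ is an isomorphism over $\mathring{\Gr}_{\nu_i^*}$, and $\mathring{X}_{\mu^*}^{\bfb_i,x_0}(\tau_i^*) \subset \mathring{\Gr}_{\nu_i^*}$ by its very definition, Lemma \ref{lem:ImZi} lets me identify $\mathring{X}_{\mu^*}^{\bfb_i,x_0}(\tau_i^*)$ with its preimage $\pi_0^{-1}(\mathring{X}_{\mu^*}^{\bfb_i,x_0}(\tau_i^*)) \subset X_i \subset \pi_1^{-1}(Y_i)$. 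It then suffices to verify that the two stated conditions cut out exactly this preimage inside $\pi_1^{-1}(Y_i)$.

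The first condition $\cE \subset F(\cE^{\vee})$ handles the boundedness relation. Recall that $X_{\mu^*,\nu_i^*}(\tau_i^*) \subset \Gr_{\nu_i^*}$ is described by the pair of inclusions $\cE \subset F(\cE^{\vee}) \subset \frac{1}{\varpi}\cE$; the second inclusion is automatic on $\pi_1^{-1}(Y_i)$ by the same duality argument as in the proof of Theorem \ref{thm:pZi} (from $\varpi F^{-1}(\cE_-^{\vee}) \subset \cE_-$ one obtains $\varpi F(\cE^{\vee}) \subset \cE_- \subset \cE$). So on $\pi_1^{-1}(Y_i)$, membership in $X_{\mu^*,\nu_i^*}(\tau_i^*)$ is equivalent to the single condition $\cE \subset F(\cE^{\vee})$.

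The second condition $\cE \cap \cE^0 = \cE_-$ handles the openness and the Satake-cycle membership. On any point of $\pi_1^{-1}(Y_i)$ one automatically has $\cE_- \subset \cE$ (from the fiber-product factorization $\cE \dashrightarrow \cE_- \dashrightarrow \cE^0$) and $\cE_- \subset \cE^0$ (from the type $\nu_{i,-}^*$ imposed on $\Gr_{\nu_{i,-}^*}$), so $\cE_- \subset \cE \cap \cE^0$. Requiring equality is a codimension condition: via a direct lattice count, using the splitting $\nu_i^* = \nu_{i,+}^* + \nu_{i,-}^*$ and that $\cE_-$ has type $\nu_{i,-}^*$ relative to $\cE^0$, the equality $\cE \cap \cE^0 = \cE_-$ forces $\cE$ to have exact relative position $\nu_i^*$ to $\cE^0$, i.e.\ $\cE \in \mathring{\Gr}_{\nu_i^*}$. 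Once $\cE \in \mathring{\Gr}_{\nu_i^*}$, the lift $(\cE,\cE_+,\cE_-)$ is unique by $\pi_0$ being an isomorphism there, and that unique lift automatically lies in the Satake cycle $\Gr^{0,\bfa_i}_{(\nu_i^*,\mu^*)|\tau_i^*+\sigma(\nu_i^*)}$: indeed $\bfa_i$ is the unique element of $\bS_{(\nu_i^*,\mu^*)|\lambda_i+\nu_i^*}$, its open stratum is dense in it, and the unique lift sits in that open stratum by exactness of the types.

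The main obstacle is the lattice/dimension bookkeeping needed to show that $\cE \cap \cE^0 = \cE_-$ exactly characterizes $\cE \in \mathring{\Gr}_{\nu_i^*}$ within $\pi_1^{-1}(Y_i)$. I expect this to reduce to a routine computation once the cocharacter data $\nu_{i,\pm}^*$ is unfolded, using that $\cE_-$ is forced to equal $\cE \cap \cE^0$ precisely when the modification $\cE \dashrightarrow \cE^0$ lies in the open Schubert cell.
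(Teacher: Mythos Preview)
Your approach is correct and matches the paper's own proof, which is a terse two-sentence argument invoking exactly the same ingredients: the fact that $\pi_0$ (equivalently $\pi_0'$) is an isomorphism over $\mathring{\Gr}_{\nu_i^*}$, Lemma \ref{lem:ImZi}, and Theorem \ref{thm:pZi}. The paper leaves implicit the two verifications you spell out---that $\cE\cap\cE^0=\cE_-$ characterizes membership in $\mathring{\Gr}_{\nu_i^*}$ (which, as you anticipate, follows from $\nu_{i,-}^*$ being minuscule so that $\length(\cE^0/\cE_-)=i$ is forced, together with the fixed index $\sum(\nu_i^*)_j=1$), and that Satake-cycle membership is automatic on this locus (which holds because $\mu^*$ is minuscule and $\sigma$ preserves open Schubert cells, so the point lands in $\mathring{\Gr}^0_{(\nu_i^*,\mu^*)\mid\tau_i^*+\sigma(\nu_i^*)}\subset\Gr^{0,\bfa_i}$).
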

\begin{proof}
Let $\mathring{X}_i$ and $\mathring{X}_{\mu^*,\nu_i^*}(\tau_i^*)$ 
be the inverse images of $\mathring{\Gr}_{\nu_i^*}$ 
in $X_i$ and $X_{\mu^*,\nu_i^*}(\tau_i^*)$. 
By Theorem \ref{thm:pZi}, 
$\mathring{X}_i$ is equal to the subscheme of 
$p_1^{-1}(Y_i)$ 
defined by the condition 
$\cE \subset \oF (\cE^{\vee})$ and 
$\cE \cap \cE^0 =\cE_-$. 
The natural morphism 
$p_0'^{-1}(\mathring{X}_{\mu^*}^{\bfb_i,x_0}(\tau_i^*)) \to 
\mathring{X}_{\mu^*}^{\bfb_i,x_0}(\tau_i^*)$ 
is an isomorphism. Hence it suffices to show that 
$p_0'^{-1}(\mathring{X}_{\mu^*}^{\bfb_i,x_0}(\tau_i^*))=\mathring{X}_i$. 

By Lemma \ref{lem:ImZi}, we have 
$p_0'^{-1}(\mathring{X}_{\mu^*}^{\bfb_i,x_0}(\tau_i^*))\subset \mathring{X}_i$. 
On the other hand, 
$\mathring{X}_i$ is contained in 
$p_0^{-1}(\mathring{X}^{b_i,x_0}_{\mu^*}(\tau_i^*))$, 
since 
$\mathring{X}_{\mu^*,\nu_i^*}(\tau_i^*)
=\mathring{X}^{b_i,x_0}_{\mu^*}(\tau_i^*)$ 
by Lemma \ref{lem:Gracl}. 
\end{proof}

\section{Intersections}\label{sec:Int}
Let $x,x' \in J_{\tau}(F)/J_{\tau}(\cO_F)$. 
Let $\Lambda_x$ and $\Lambda_{x'}$ be the lattices of $V$ 
determined by $x$ and $x'$. 
We put 
\[
 l_{x,x'}=\length_{\cO_F}\left( \Lambda_x/(\Lambda_x \cap \Lambda_{x'}) \right) . 
\]
We note that 
\[
 l_{x,x'}=\length_{\cO_F}\left( (\Lambda_x +\Lambda_{x'})/\Lambda_x \right)=\length_{\cO_F}\left( \Lambda_{x'}/(\Lambda_x \cap \Lambda_{x'}) \right) 
\]
by taking dual with respect to the hermitian pairing. 
We assume that $\varpi^2 \Lambda_{x'} \subset \Lambda_x$. This assumption is satisfied if $X_{\mu^*}^{\bfb_i,x}(\tau_i^*) \cap X_{\mu^*}^{\bfb_{i'},x'}(\tau_{i'}^*)$ is non-empty for some $i$ and $i'$, because if $\cE$ is in the intersection we have $\varpi \cE_{x'} \subset \cE \subset \varpi^{-1} \cE_x$. 

Let $\cE_x$ and $\cE_{x'}$ be the modifications of 
$\cE^0$ corresponding to $\Lambda_x$ and $\Lambda_{x'}$. 
Let $\bfP_{x,x'}$ be the parabolic subgroup of $\bfG$ that is the stabilizer of the filtration 
\[
 \varpi \Lambda_x \subset 
 \varpi^2 \Lambda_{x'} + \varpi \Lambda_x 
 \subset 
 (\Lambda_x \cap \varpi \Lambda_{x'}) +\varpi \Lambda_x \subset 
 (\Lambda_x \cap \Lambda_{x'}) +\varpi \Lambda_x \subset \Lambda_x. 
\]
We note that 
$\varpi \Lambda_x \subset \Lambda_{x'}$ if and only if 
$\varpi \Lambda_{x'} \subset \Lambda_x$ 
by taking dual with respect to the hermitian pairing. 
We put 
\begin{align*}
 d_1 &=\dim ((\varpi^2 \Lambda_{x'} + \varpi \Lambda_x)/\varpi \Lambda_x), \\
 d_2 &=\dim (((\Lambda_x \cap \varpi \Lambda_{x'}) +\varpi \Lambda_x)/\varpi \Lambda_x). 
\end{align*}
We have 
\begin{equation}\label{eq:d12l}
 d_1 +d_2 =l_{x,x'} 
\end{equation} 
since 
\begin{align*}
  \Lambda_{x'}/( \Lambda_{x'} \cap \varpi^{-1} \Lambda_x) &\simeq \varpi \Lambda_{x'}/(\varpi \Lambda_{x'} \cap \Lambda_x) \simeq (\varpi \Lambda_{x'} + \Lambda_x)/ \Lambda_x \\ 
  &\simeq (\varpi^2 \Lambda_{x'} + \varpi \Lambda_x)/ \varpi\Lambda_x ,\\ 
  ( \Lambda_{x'} \cap \varpi^{-1} \Lambda_x)/( \Lambda_{x'} \cap \Lambda_x) &\simeq 
  ( \varpi \Lambda_{x'} \cap  \Lambda_x)/( \varpi \Lambda_{x'} \cap \varpi \Lambda_x) \\ &\simeq ((\Lambda_x \cap \varpi \Lambda_{x'}) +\varpi \Lambda_x)/\varpi \Lambda_x. 
\end{align*}
In the identification \eqref{eq:ttbiso} for $\bfb_i$, we use $\delta_{\lambda_i}=-\nu_i^*$ if $1 \leq i \leq r-1$ and 
$\delta_{\lambda_r}=-\varepsilon_{r+1}$ if $i=r$. 

\subsection{Intersection of components for $\nu_i$ and $\nu_{i'}$, where $i,i' \neq r$ if $n$ is even.}

Let $1 \leq i, i' \leq r$. Assume that $i\neq r$ and $i' \neq r$ if $n$ is even. 

\subsubsection{Different hyperspecial subgroups}
We assume that $x \neq x'$. 
For a subscheme $X$ of $X_{\mu^*}^{\bfb_i,x}(\tau_i^*)$, 
let 
$X_{\bfP_{x,x'},[w]}$ be the inverse image of 
$X_{I_{n}^{i-1,n-i}}([1])_{\bfP_{x,x'},[w]}^{\pf}$ 
under 
$X \hookrightarrow X_{\mu^*}^{\bfb_i,x}(\tau_i^*) \to X_{I_{n}^{i-1,n-i}}([1])^{\pf}$. 

We recall that 
\[
 \mathring{X}_{\mu^*}^{\bfb_{i'},x'}(\tau_{i'}^*)=
 X_{\mu^*,\nu_{i'}^*}^{\bfa_{i'},x'}(\tau_{i'}^*) \setminus X_{\mu^*}^{\bfb_{i'-1},x'}(\tau_{i'-1}^*). 
\]

Assume that $i \leq i'$. 
For $j_1,j_2 \in \bN$ such that 
$i-1-d_2 \leq j_1 \leq i-1$ and 
$d_2 -i \leq j_2 \leq n-i-d_2 -j_1$, 
we define $w_{j_1,j_2} \in S_n$ by 
\[
 w_{j_1,j_2} (j)= 
 \begin{cases}
 j+j_1 & \textrm{if $i-j_1 \leq j \leq d_2$,}\\ 
 j+i-j_1-d_2 -1 & \textrm{if $d_2 +1 \leq j \leq d_2 +j_1$,}\\ 
 j+j_2 & \textrm{if $n-j_2-i+1 \leq j \leq n-d_2$,}\\ 
 j+d_2 -i-j_2 & \textrm{if $n-d_2 +1 \leq j \leq n-d_2 +j_2$,}\\ 
 j & \textrm{otherwise.} 
 \end{cases}
\]

We put 
$\cE_{x,x'}^+=(\cE_x + \cE_{x'}) \cap \frac{1}{\varpi} \cE_x$ 
and 
$\cE_{x,x'}^-=(\cE_x \cap \cE_{x'})+\varpi \cE_x$. 
Since $X_{I_{n}^{i-1,n-i}}([1])_{\bfP_{x,x'},[w_{j_1,j_2}]}^{\pf}$ parametrizes two subspaces of $\Lambda_x /\varpi \Lambda_x$ satisfying some conditions, there are universal vector bundles $\cV_{+} \subset \cV_- \subset \cE_x /\varpi \cE_x$ on $X_{I_{n}^{i-1,n-i}}([1])_{\bfP_{x,x'},[w_{j_1,j_2}]}^{\pf}$. 
We put 
\begin{align*}
    \cE_+ = \frac{1}{\varpi} \pi_x^{-1} (\cV_+) \subset \frac{1}{\varpi} \cE_x, \quad 
    \cE_- = \pi_x^{-1} (\cV_-) \subset \cE_x 
\end{align*}
where $\pi_x \colon \cE_x \to \cE_x /\varpi \cE_x$ is the natural projection. 
Those are the same as the restrictions of $(\cE_+,\cE_-)$ in \S \ref{sebsec:nonmin} under the identification $Y_i=X_{I_{n}^{i-1,n-i}}([1])^{\mathrm{pf}}$ in that subsection. 

We note that 
\[
 \length ((\cE_+ +\cE_{x,x'}^+ )/ \cE_{x,x'}^+ )=j_1, \quad \length ((\cE_- +\cE_{x,x'}^- )/\cE_{x,x'}^- )=j_2. 
\]
We put $\cE_{+,-} =\cE_+ \cap (\cE_- +\cE_{x'})$ and $d_{j_1,j_2}=j_2 -j_1 +2i-1-d_2$. 
We note that 
\begin{equation}\label{eq:incE+-}
 \cE_{+,-}=\cE_- + \cE_+ \cap \cE_{x'} \subset 
 \oF (\cE_-^{\vee}) \cap (\oF (\cE_+^{\vee}) + \cE_{x'}) 
 =\oF (\cE_{+,-}^{\vee}) 
\end{equation}
using 
$\cE_+ \subset \oF (\cE_-^{\vee})$ 
and 
$\cE_- \subset \oF (\cE_+^{\vee})$. 

\begin{lem}\label{lem:lengE+-}
We have 
$\length (\cE_{+,-}/\cE_-)=d_{j_1,j_2}$. 
Further $\cE_{+,-}/\cE_-$ is a vector bundle on 
$X_{I_{n}^{i-1,n-i}}([1])_{\bfP_{x,x'},[w_{j_1,j_2}]}^{\pf}$. 
\end{lem}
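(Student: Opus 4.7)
The plan is to reduce both assertions to a computation involving intersections with the auxiliary lattice $\cE_{x'}$ and then to make the computation explicit using the Bruhat stratification by $w_{j_1,j_2}$. The first move is the modular law: since $\cE_- \subset \cE_+$, one has
\[
 \cE_{+,-} = \cE_+ \cap (\cE_- + \cE_{x'}) = \cE_- + (\cE_+ \cap \cE_{x'}),
\]
whence the second isomorphism theorem yields
\[
 \cE_{+,-}/\cE_- \;\cong\; (\cE_+ \cap \cE_{x'})/(\cE_- \cap \cE_{x'}).
\]
With this in hand, both the length statement and the local-freeness statement can be transferred to the right-hand side, which is better adapted to the combinatorics of $\bfP_{x,x'}$.

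For the length, I would work on the stratum $X_{I_{n}^{i-1,n-i}}([1])_{\bfP_{x,x'},[w_{j_1,j_2}]}^{\pf}$, where by construction the relative position of the flag $(\cE_+, \cE_-)$ with respect to the four-step filtration defining $\bfP_{x,x'}$ is prescribed by $w_{j_1,j_2}$. The integers $j_1$ and $j_2$ are, by definition, the lengths of the two indicated quotients involving $\cE_{x,x'}^\pm$; the four block moves appearing in the formula for $w_{j_1,j_2}$ (of respective sizes $d_2 - (i - j_1)+1$, $j_1$, $n - d_2 - (n - j_2 - i + 1) + 1$, $j_2$) encode precisely how the steps of the flag $\cE_- \subset \cE_+$ meet the steps of the $\bfP_{x,x'}$-filtration. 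Translating each block move into an intersection length with $\cE_{x'}$ (as opposed to $\cE_{x,x'}^\pm$), and using that $\length(\cE_+/\cE_-) = 2i-1$ (which comes from $\nu_{i,+} + \nu_{i,-} = \nu_i$ together with the bound $\xi_i = \varepsilon_1 + \cdots + \varepsilon_{2i-1}$), one arrives at
\[
 \length\!\bigl((\cE_+ \cap \cE_{x'})/(\cE_- \cap \cE_{x'})\bigr) = (2i-1) - d_2 + (j_2 - j_1) = d_{j_1,j_2}.
\]

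For the vector-bundle assertion, the above length is constant on the connected stratum, so $\cE_- \cap \cE_{x'} \hookrightarrow \cE_+ \cap \cE_{x'}$ is an inclusion of coherent subsheaves of $\cE_x$ whose ranks are locally constant; standard criteria then imply that both are locally direct summands and that the quotient is locally free of rank $d_{j_1,j_2}$, whence so is $\cE_{+,-}/\cE_-$. The main obstacle is the combinatorial bookkeeping in the length computation: one must carefully unpack the block structure of $w_{j_1,j_2}$ across its four disjoint intervals in $\{1,\ldots,n\}$, invoke the involutive symmetry $\varpi\Lambda_x \subset \Lambda_{x'} \Longleftrightarrow \varpi\Lambda_{x'} \subset \Lambda_x$ to pair up the two symmetric halves of the filtration, and then verify that the alternating sum of contributions collapses to exactly $d_{j_1,j_2}$ rather than to an off-by-one variant. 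Once this bookkeeping is completed, everything else is formal.
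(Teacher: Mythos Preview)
Your approach is correct in outline, but it takes a harder road than the paper and leaves the one genuinely nontrivial step as a promise. Two points of comparison.

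\textbf{The length.} You compute $\length(\cE_{+,-}/\cE_-)$ via the intersection model $(\cE_+ \cap \cE_{x'})/(\cE_- \cap \cE_{x'})$ and then propose to unpack the block structure of the permutation $w_{j_1,j_2}$. The paper instead computes the complementary length $\length(\cE_+/\cE_{+,-})$ via the sum model $(\cE_+ + \cE_{x'})/(\cE_- + \cE_{x'})$, and the whole computation becomes a four-line chain of elementary isomorphisms which never touches the combinatorics of $w_{j_1,j_2}$ at all. The reason is that the paper has already recorded, just before the lemma, that
\[
 j_1 = \length((\cE_+ + \cE_{x,x'}^+)/\cE_{x,x'}^+), \qquad j_2 = \length((\cE_- + \cE_{x,x'}^-)/\cE_{x,x'}^-),
\]
so these are available as black boxes. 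One then rewrites $(\cE_+ + \cE_{x'})/(\cE_- + \cE_{x'})$ by inserting the intermediate lattice $\varpi\cE_x + \cE_{x'}$, reads off $j_1$ and $j_2$ directly, and picks up a single extra term $\length((\cE_x \cap \varpi\cE_{x'})/(\varpi\cE_x \cap \varpi\cE_{x'})) = d_2$. This gives $\length(\cE_+/\cE_{+,-}) = j_1 + d_2 - j_2$, hence $\length(\cE_{+,-}/\cE_-) = (2i-1) - (j_1 + d_2 - j_2) = d_{j_1,j_2}$. Your intersection route should give the same answer by duality, but the ``bookkeeping'' you flag as the main obstacle is entirely avoidable.

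\textbf{The vector-bundle claim.} Your argument that constant length implies local freeness is the right idea, but in the perfect-scheme setting this is not quite a ``standard criterion''; the paper invokes \cite[Lemma 7.3]{BhScProWG} for it. Also, the paper applies that lemma to $\cE_+/\cE_{+,-}$ (a quotient of the known vector bundle $\cE_+/\cE_-$ of constant rank), and then deduces that the kernel $\cE_{+,-}/\cE_-$ is a vector bundle. This sidesteps the need to argue separately that $\cE_\pm \cap \cE_{x'}$ are locally direct summands.
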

\begin{proof}
We have 
\begin{align*}
 \length (\cE_+/\cE_{+,-})&= \length ((\cE_+ +\cE_{x'})/(\cE_- +\cE_{x'})) \\ 
 &=\length ((\cE_+ +\cE_{x'})/(\varpi \cE_x + \cE_{x'} )) - \length ((\cE_- +\cE_{x'})/(\varpi \cE_x + \cE_{x'} )) \\ 
  &=\length ((\cE_+ +\cE_{x'})/( \cE_x + \cE_{x'} )) + 
  \length ((\cE_x + \cE_{x'})/(\varpi \cE_x + \cE_{x'} )) 
  - j_2 \\ 
  &=j_1 + 
  \length (( \cE_x \cap \varpi \cE_{x'} )/(\varpi \cE_x \cap \varpi\cE_{x'})) 
  - j_2 =j_1+d_2 -j_2. 
\end{align*}
Hence we obtain the first claim. 
By the above equalities, 
$\length (\cE_+/\cE_{+,-})$ is constant on 
$X_{I_{n}^{i-1,n-i}}([1])_{\bfP_{x,x'},[w_{j_1,j_2}]}^{\pf}$. Hence $\cE_+/\cE_{+,-}$ is a vector bundle on 
$X_{I_{n}^{i-1,n-i}}([1])_{\bfP_{x,x'},[w_{j_1,j_2}]}^{\pf}$ 
by \cite[Lemma 7.3]{BhScProWG}. 
Therefore $\cE_{+,-}/\cE_-$ is also a vector bundle. 
\end{proof}

Let $\sG_{j_1,j_2}$ be the restriction of $\sG$ to 
$X_{I_{n}^{i-1,n-i}}([1])_{\bfP_{x,x'},[w_{j_1,j_2}]}^{\pf}$. 
Let
\[
 t_{j_1,j_2} \in \mathrm{Oc}(\mathrm{Dyn}(\sG_{j_1,j_2}))(X_{I_{n}^{i-1,n-i}}([1])_{\bfP_{x,x'},[w_{j_1,j_2}]}^{\pf})  
\] 
denote the restriction of $t_i$ in \eqref{eq:tidef}. 
Let $\sP_{j_1,j_2}$ be the parabolic subgroup of $\sG_{j_1,j_2}$ 
determined by 
\[
 \cE_- \subset \cE_+ \cap (\cE_- +\cE_{x'}) \subset \cE_+. 
\]

We put $l_{j_1,j_2}=i'-1-j_2 -d_1$. 
We define $s_{j_1,j_2} \in S_{2i-1}$ by 
\[
 s_{j_1,j_2}(j)=  
 \begin{cases}
 j+l_{j_1,j_2} 
 & \textrm{if $
 i -l_{j_1,j_2} 
 \leq j \leq d_{j_1,j_2}$,} \\ 
 j +i-1 -d_{j_1,j_2} -l_{j_1,j_2}
 & \textrm{if $d_{j_1,j_2} +1 \leq j \leq 
 d_{j_1,j_2}+l_{j_1,j_2} 
 $,} \\ 
 j & \textrm{otherwise.} 
 \end{cases}
\]
Let $r_{j_1,j_2}$ be the element of 
\[
 (q_{\sG_{j_1,j_2}}^{-1}(\bft (\sP_{j_1,j_2}),t_{j_1,j_2}))(X_{I_{n}^{i-1,n-i}}([1])_{\bfP_{x,x'},[w_{j_1,j_2}]}^{\pf})
\]
corresponding to $[s_{j_1,j_2}]$ by 
Lemma \ref{lem:igiso} \ref{en:Wphi}.

\begin{prop}\label{prop:inter}
Assume that 
$\mathring{X}_{\mu^*}^{\bfb_i,x}(\tau_i^*) \cap \mathring{X}_{\mu^*}^{\bfb_{i'},x'}(\tau_{i'}^*)$ 
is not empty. 
Then we have $l_{x,x'} \leq i+i'-1$. 

The subscheme 
$\mathring{X}_{\mu^*}^{\bfb_i,x}(\tau_i^*) \cap \mathring{X}_{\mu^*}^{\bfb_{i'},x'}(\tau_{i'}^*) \subset \mathring{X}_{\mu^*}^{\bfb_{i},x}(\tau_{i}^*)$ is the locus defined by the condition that 
$\cE_x +\varpi \cE_{x'} \subset \cE_+ \subset \frac{1}{\varpi} \cE_{x,x'}^-$, 
$\varpi \cE_{x,x'}^+ \subset \cE_- \subset \cE_x \cap \frac{1}{\varpi} \cE_{x'}$, 
\begin{equation}\label{eq:lengE-}
 \length ((\cE_- +\cE_{x,x'}^-)/ \cE_{x,x'}^- ) \leq  [(i'-i+d_2 -d_1 )/2] 
\end{equation} 
and 
\begin{equation}\label{eq:lengsum}
 \length ((\cE +\cE_{+,-} )/ \cE_{+,-}) + \length ((\cE_- +\cE_{x,x'}^-)/ \cE_{x,x'}^- ) = i'-1 -d_1. 
\end{equation} 
In particular,  
$\mathring{X}_{\mu^*}^{\bfb_i,x}(\tau_i^*) \cap \mathring{X}_{\mu^*}^{\bfb_{i'},x'}(\tau_{i'}^*)$ 
is the union of 
$\left( \mathring{X}_{\mu^*}^{\bfb_i,x}(\tau_i^*) \cap \mathring{X}_{\mu^*}^{\bfb_{i'},x'}(\tau_{i'}^*) \right)_{\bfP_{x,x'},[w_{j_1,j_2}]}$ 
for 
$j_1,j_2 \in \bN$ such that $j_1 +d_2 -i \leq j_2 \leq j_1 +d_2-i +1$, 
\begin{align*}
    &i-1-d_2 \leq j_1 \leq i-1-d_1
    ,\\  
 &
 i' -i -d_1 \leq 
 j_2 \leq \min \{ 
 [(i'-i+d_2 -d_1 )/2] 
 , n-i-d_2 -j_1 
 \}. 
\end{align*}
Further we have 
\[
 \left( \mathring{X}_{\mu^*}^{\bfb_i,x}(\tau_i^*) \cap \mathring{X}_{\mu^*}^{\bfb_{i'},x'}(\tau_{i'}^*)  \right)_{\bfP_{x,x'},[w_{j_1,j_2}]} = 
 \mathring{X}_{\mu^*}^{\bfb_i,x}(\tau_i^*)_{\bfP_{x,x'},[w_{j_1,j_2}]} 
 \cap \Par_{t_{j_1,j_2}} 
(\sG_{j_1,j_2}; \sP_{j_1,j_2})_{r_{j_1,j_2}}^{\pf} . 
\] 
\end{prop}
\begin{proof}
The intersection 
$\mathring{X}_{\mu^*}^{\bfb_i,x}(\tau_i^*) \cap
 \mathring{X}_{\mu^*}^{\bfb_{i'},x'}(\tau_{i'}^*)$ 
is parametrized by 
$\cE \dashrightarrow \cE_x$ which is equal to 
$\nu_i^*$ such that 
$\cE \subset \oF (\cE^{\vee}) \subset \frac{1}{\varpi} \cE$ 
and 
$\cE \dashrightarrow \cE_{x'}$ 
is equal to $\nu_{i'}^*$. 
Let $\cE$ be a point of 
$\mathring{X}_{\mu^*}^{\bfb_i,x}(\tau_i^*) \cap \mathring{X}_{\mu^*}^{\bfb_{i'},x'}(\tau_{i'}^*)$. 
We put 
\[
 \cE_+=\cE +\cE_x, \quad 
 \cE_-=\cE \cap \cE_x, \quad 
 \cE_+'=\cE +\cE_{x'}, \quad
 \cE_-'=\cE \cap \cE_{x'}. 
\]
Then we have 
\begin{align*}
&\length (\cE_x/\cE_-)=i, \quad 
\length (\cE/\cE_-)=i-1, \\ 
&\length (\cE_{x'}/\cE_-')= i', \quad 
\length (\cE/\cE_-') = i' -1. 
\end{align*}
Hence we have 
$\length(\cE_-/(\cE_- \cap \cE_-')) \leq i' -1$ and 
$\length (\cE_x/(\cE_- \cap \cE_-')) \leq i+i' -1$. 
Therefore the inclusion 
$\cE_- \cap \cE_-' \subset \cE_x \cap \cE_{x'}$ implies that 
\[
 l_{x,x'} \leq i+i' -1. 
\]

We have 
$\cE_x +\varpi \cE_{x'} \subset \cE_+$ and 
$\varpi \cE_{x,x'}^+ =\varpi \cE_x +(\cE_x \cap \varpi \cE_{x'}) \subset \cE_-$, 
since $\varpi \cE_{x'} \subset \cE$. 
We have 
$\varpi \cE_+ \subset \cE_x \cap (\cE_{x'}+\varpi \cE_x)=\cE_{x,x'}^-$ and  
$\cE_- \subset \cE_x \cap \frac{1}{\varpi} \cE_{x'}$, 
since $\varpi \cE_+ \subset \cE_x$ and 
$\varpi \cE \subset \cE_{x'}$. 

We put $j_1=\length ((\cE_+ +\cE_{x,x'}^+ )/ \cE_{x,x'}^+ )$ 
and $j_2=\length ((\cE_- +\cE_{x,x'}^- )/\cE_{x,x'}^- )$. 
We have 
\begin{align*}
 \length (\cE_- / (\cE_- \cap \cE_-')) 
 &=\length (\cE_- / (\cE_- \cap \cE_{x'})) 
 =j_2 +\length ((\cE_-\cap \cE_{x,x'}^-) / (\cE_- \cap \cE_{x'}))\\ 
 &=j_2 +\length (\cE_{x,x'}^-/ (\cE_x \cap \cE_{x'})) = j_2 +d_1. 
\end{align*}
We have 
\[
 j_2 +i-d_2 = \length ((\cE_- + \cE_{x,x'}^-)/\cE_-) 
 \leq 1+\length ((\oF (\cE_+^{\vee}) + \cE_{x,x'}^-)/\oF (\cE_+^{\vee}))
\] 
since $\length (\oF (\cE_+^{\vee})/\cE_-)=1$. 
Further we have 
\begin{align*}
 \length & ((\oF (\cE_+^{\vee}) + \cE_{x,x'}^-)/\oF (\cE_+^{\vee}))
 = \length (\cE_+ / (\cE_+ \cap \cE_{x,x'}^+)) 
 \leq \length (\cE_+ / (\cE_-' +\cE_x )) \\ 
 &\leq 
 \length (\cE / (\cE_- +\cE_-'))
 =  i'-1-\length (\cE_- / (\cE_- \cap \cE_-')) 
 =i'-1-j_2 -d_1. 
\end{align*}
Therefore we obtain 
$j_2  \leq  [(i'-i+d_2 -d_1 )/2]$. 

Further, 
$j_1 +d_2 -i \leq j_2 \leq j_1 +d_2-i +1$ 
follows from $\length (\oF (\cE_-^{\vee})/\cE_+)=1$. 
This implies 
$i-1-d_2 \leq j_1$ and 
$d_2 -i \leq j_2$. 
We have 
$j_1 \leq i-1-d_1$ and 
$j_2 \leq n-i-d_2 -j_1$ 
by the inclusions 
$\cE_x +\varpi \cE_{x'} \subset \cE_+ \cap \cE_{x,x'}^+$
and 
$\cE_+ +\cE_{x,x'}^+ \subset \cE_- \cap \cE_{x,x'}^-$. 
The equality 
\[
 \length ((\cE +\cE_{+,-} )/ \cE_{+,-}) + \length ((\cE_- +\cE_{x,x'}^-)/ \cE_{x,x'}^- ) = i'-1 -d_1 
\] 
and $\length ((\cE +\cE_{+,-} )/ \cE_{+,-}) \leq i-1$ imply 
$j_2 \geq i'-i-d_1$. 

We have 
\begin{align*}
 \length & ((\cE +\cE_{+,-} )/ \cE_{+,-}) 
 = \length (\cE / (\cE \cap \cE_{+,-}))
 = \length (\cE / (\cE \cap (\cE_- + \cE_{x'} ))) \\ 
 &= \length ((\cE +\cE_{x'})/ (\cE_- + \cE_{x'} )) 
 = \length ((\cE +\cE_{x'})/ \cE_{x'} ) 
 -\length ((\cE_- + \cE_{x'} )/ \cE_{x'} ) \\ 
 &= \length ((\cE +\cE_{x'})/ \cE_{x'} ) 
 -\length (\cE_-/ (\cE_- \cap \cE_{x'} ))  . 
\end{align*}
Hence, 
$\length  ((\cE +\cE_{+,-} )/ \cE_{+,-}) = i'-1 - j_2 -d_1$ 
if and only if 
$\length ((\cE +\cE_{x'})/ \cE_{x'} ) = i'-1$. 
This implies the last claim. 
\end{proof}

\subsubsection{Same hyperspecial subgroup}
Assume that $x=x'$. 
It suffices to consider the case where $x=x'=x_0$, 
since all the hyperspecial subgroups are conjugate. 

Let $2 \leq i \leq [(n-1)/2]$. 
Let $(\cE,\cE_+,\cE_-)$ be a point of 
$X_i$. 
Let $s$ be the rank of 
$(\cE \cap \cE^0)/\cE_-$. 
We put $\cV_1=\cE/\cE_-$ 
and take 
$\cV_2 \subset \cE^0/\cE_-$ and 
$\cV_3 \subset \cE_+/\cE_-$ 
such that projections induce 
isomorphisms 
$\cV_2 \simeq (\cE+\cE^0)/\cE$ 
and 
$\cV_3 \simeq \cE_+/(\cE+\cE^0)$. 
An open neighbourhood of 
$(\cE,\cE_+,\cE_-)$ in $\Gr(i-1,\sV_{Y_i})$ 
under \eqref{eq:isoGr} 
is given by 
$\Hom (\cV_1,\cV_2 \oplus \cV_3)$ 
sending 
$f \in \Hom (\cV_1,\cV_2 \oplus \cV_3)$ 
to the inverse image $\cE_f$ of 
\[
 \{ v+f(v) \mid v \in \cV_1 \} \subset \cE_+/\cE_- 
\]
in $\cE_+$. 
By Theorem \ref{thm:pZi}, the condition that 
$\cE_f$ belongs to $X_i$ 
is equivalent to 
\begin{equation}\label{eq:condEf}
 \langle v+f(v) , \oF (v'+f(v')) \rangle =0 
\end{equation}
in $\varpi^{-1}W_{\cO_F}(R) / W_{\cO_F}(R)$ 
for $v,v' \in \cV_1$. 
We write 
$f$ as $f_2 +f_3$ for 
$f_2 \in \Hom (\cV_1,\cV_2)$ 
and 
$f_3 \in \Hom (\cV_1,\cV_3)$ 
For $v,v' \in (\cE \cap \cE^0)/\cE_-$, 
the condition \eqref{eq:condEf} 
is equivalent to 
\begin{equation}\label{eq:cond0Ef}
 \langle v +f_2(v), \oF (f_3(v')) \rangle + 
 \langle f_3(v) , \oF (v'+f_2(v')+f_3(v')) \rangle 
 =0 
\end{equation}
in $\varpi^{-1}W_{\cO_F}(R) / W_{\cO_F}(R)$. 

Take a basis $v_1,\ldots,v_{i-1}$ of $\cV_1$ 
such that $v_1,\ldots ,v_s$ form a basis of $(\cE \cap \cE^0)/\cE_-$. 
Take a basis $v_i,\ldots,v_{2i-s-1}$ of $\cV_2$ and 
a basis $v_{2i-s},\ldots ,v_{2i-1}$ of $\cV_3$. 
Write $f(v_l)$ as $x_{l,i} v_i + \cdots + x_{l,2i-1} v_{2i-1}$. 
Then the condition \eqref{eq:cond0Ef} is equivalent to 
\begin{equation*}
 \langle v_l +\sum_{j=i}^{2i-s-1} x_{l,j} v_j, \oF (\sum_{k=2i-s}^{2i-1} x_{m,k}v_k ) \rangle + 
 \langle \sum_{k=2i-s}^{2i-1} x_{l,k} v_k, \oF (v_m+\sum_{j=i}^{2i-1} x_{m,j} v_j) \rangle 
 =0 
\end{equation*}
for $1 \leq l,m \leq s$. 
We can write this as 
\[
 (\langle v_l +\sum_{j=i}^{2i-s-1} x_{l,j} v_j, \oF ( v_k ) \rangle)_{l,k} 
 (x_{m,k}^q)_{k,m} =
 -( x_{l,k})_{l,k} ( \langle v_k, \oF (v_m+\sum_{j=i}^{2i-1} x_{m,j} v_j) \rangle )_{k,m}. 
\]
Taking the determinant, we obtain 
\begin{align*}
 \det (x_{l,k})_{l,k} \Bigl( 
 & 
 \det (\langle v_l +\sum_{j=i}^{2i-s-1} x_{l,j} v_j, \oF ( v_k ) \rangle)_{l,k} 
 (\det (x_{l,k})_{l,k})^{q-1} \\
 &-(-1)^s 
 \det ( \langle v_k, \oF (v_m+\sum_{j=i}^{2i-1} x_{m,j} v_j) \rangle )_{k,m} 
 \Bigr) =0. 
\end{align*}
The condition $\cE_f \cap \cE^0=\cE_-$ is equivalent 
to $\det (x_{l,k})_{l,k} \neq 0$. 
Hence, 
if $(\cE,\cE_+,\cE_-)$ belongs to 
the closure of $p_0'^{-1}(\mathring{X}_{\mu^*}^{\bfb_i,x_0}(\tau_i^*))$, 
then we have 
$\det ( \langle v_k, \oF (v_m ) \rangle )_{k,m} =0$. 
This means 
$\oF^{-1}(\cE_+^{\vee}) \subset \cE$. 
Hence we have obtained the following proposition: 

\begin{prop}
The intersection 
\[
 p_0'^{-1}(X_{\mu^*}^{\bfb_{i-s},x_0}(\tau_{i-s}^*)) \cap 
\ol{p_0'^{-1}(\mathring{X}_{\mu^*}^{\bfb_i,x_0}(\tau_i^*))} 
\]
is contained in the locus defined by 
the condition $\oF^{-1}(\cE_+^{\vee}) \subset \cE$. 
\end{prop}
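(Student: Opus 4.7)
The plan is to work inside the affine chart $\Hom(\cV_1, \cV_2 \oplus \cV_3) \hookrightarrow \Par_{t_i}(\sG_{Y_i})$ set up just before the statement, centered at the given point $(\cE, \cE_+, \cE_-)$, and to interpret the computation displayed above as a polynomial identity on $X_i$. Keep the notation $s$ for the rank of $(\cE \cap \cE^0)/\cE_-$. By Theorem \ref{thm:pZi}, the defining condition of $X_i$ in this chart is \eqref{eq:condEf}; restricting to vectors $v, v'$ taken from $(\cE \cap \cE^0)/\cE_-$ gives the $s \times s$ matrix equation underlying \eqref{eq:cond0Ef}, and taking determinants of both sides (using $\det(x_{m,k}^q)_{k,m} = \det(x_{l,k})^q$) yields the factored identity
\[
 \det(x_{l,k}) \cdot Q(x) = 0
\]
on $X_i$, where $Q(x)$ is the polynomial inside the large parentheses in the displayed calculation.

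The next step is to propagate the vanishing of $Q$. By Proposition \ref{prop:Xopen}, the open subscheme $\pi_0'^{-1}(\mathring{X}_{\mu^*}^{\bfb_i,x_0}(\tau_i^*)) \subset X_i$ is cut out in the chart by the invertibility condition $\det(x_{l,k}) \neq 0$, so the factored identity forces $Q \equiv 0$ there. By scheme-theoretic closure of the zero locus of a polynomial, $Q$ then vanishes identically on $\ol{\pi_0'^{-1}(\mathring{X}_{\mu^*}^{\bfb_i,x_0}(\tau_i^*))}$. Specialising at the origin $x = 0$ of the chart, which is precisely the point $(\cE, \cE_+, \cE_-)$ itself, the summand of $Q$ carrying the factor $(\det(x_{l,k}))^{q-1}$ drops out (since $q > 1$), leaving
\[
 Q(0) = -(-1)^s \det\!\bigl(\langle v_k, F(v_m)\rangle\bigr)_{k,m} = 0.
\]

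To finish, I would translate the vanishing of this $s \times s$ determinant into the lattice inclusion $F^{-1}(\cE_+^\vee) \subset \cE$. The matrix $(\langle v_k, F(v_m)\rangle)_{k,m}$ records, in the bases $v_1,\dots,v_s$ of $(\cE \cap \cE^0)/\cE_-$, the pairing induced by the hermitian form on $V$ together with the Frobenius on the second factor, valued in $F(\cE_-^\vee)/F(\cE_+^\vee) \simeq \varpi^{-1}W_{\cO}(R)/W_{\cO}(R)$; once $\cE_+/\cE_-$ is identified with the $F$-twisted dual of itself via this pairing, degeneracy of the Gram matrix on $(\cE \cap \cE^0)/\cE_-$ is exactly the assertion that $F^{-1}(\cE_+^\vee)$ is absorbed into $\cE$. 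I expect this final lattice-theoretic unpacking of the determinantal identity — matching up the dualities between $\cE_\pm$ and $\cE_\pm^\vee$ with the Frobenius twist — to be the main obstacle, while the preceding steps are direct consequences of the calculation already made together with a standard closure argument.
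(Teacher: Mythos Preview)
Your proposal is correct and follows exactly the argument the paper gives in the paragraphs immediately preceding the proposition: derive the factored identity $\det(x_{l,k})\cdot Q(x)=0$ on $X_i$ in the affine chart, use that $\det(x_{l,k})\neq 0$ characterises the open locus $\pi_0'^{-1}(\mathring{X}_{\mu^*}^{\bfb_i,x_0}(\tau_i^*))$ to force $Q\equiv 0$ there and hence on its closure, and evaluate at the origin to obtain $\det(\langle v_k,F(v_m)\rangle)=0$. One small correction for your last paragraph: in that determinant the row index $k$ runs over $\{2i-s,\ldots,2i-1\}$ (the basis of $\cV_3\simeq \cE_+/(\cE+\cE^0)$), not over $\{1,\ldots,s\}$, so it is not a Gram matrix on $(\cE\cap\cE^0)/\cE_-$ but rather the matrix of the pairing $\langle\,\cdot\,,F(\,\cdot\,)\rangle$ between $\cV_3$ and $(\cE\cap\cE^0)/\cE_-$; since this pairing already vanishes on $(\cE+\cE^0)/\cE_-\times (\cE\cap\cE^0)/\cE_-$, its degeneracy on $\cV_3\times(\cE\cap\cE^0)/\cE_-$ says exactly that some nonzero class in $(\cE\cap\cE^0)/\cE_-$ lies in the right radical $F^{-1}(\cE_+^\vee)/\cE_-$, which combined with the rank-one constraint $\length(F^{-1}(\cE_+^\vee)/\cE_-)=1$ gives $F^{-1}(\cE_+^\vee)\subset \cE$.
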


Conversely, we assume that 
$\oF^{-1}(\cE_+^{\vee}) \subset \cE$. 
Then we may assume that 
$v_1$ is a basis of $\oF^{-1}(\cE_+^{\vee})/\cE_-$, 
$v_i$ is an element of $(\oF (\cE^{\vee}) \cap \cE^0)/\cE_-$ 
lifting a basis of 
$(\oF (\cE^{\vee}) \cap \cE^0)/(\cE \cap \cE^0)$ 
such that $v_i \notin \oF^{-1}(\cE_+^{\vee})/\cE_-$ 
and 
$v_{2i-s}$ is an element of 
$\oF (\cE^{\vee})/\cE_-$ lifting a basis of 
$(\oF (\cE^{\vee}) + \cE^0)/(\cE + \cE^0)$. 
Further, we may assume that 
$v_2,\ldots,v_{i-1}$ and 
$v_{2i-1}, \ldots , v_{2i-s+1}, v_{2i-s-1},\ldots , v_{i+1}$ 
form dual base with respect to the pairing 
\[
 \cE/(\oF^{-1}(\cE_+^{\vee})) \times \cE_+/(\oF (\cE^{\vee})); (v,v') \mapsto 
 \langle \oF (v),v' \rangle . 
\] 
and that $\langle v_j,\oF (v_k) \rangle=0$ 
for $i+1 \leq j \leq 2i-s-1$ and $i \leq k \leq 2i-1$. 
Then the condition \eqref{eq:condEf} is equivalent to 
\begin{align}
 &\langle \sum_{j=2i-s}^{2i-1} x_{l,j} v_j, \oF (\sum_{k'=i}^{2i-1} x_{m,k'} v_{k'}) \rangle + 
 \begin{cases}
 \langle v_l +x_{l,i} v_i, \oF (\sum_{k=2i-s}^{2i-1} x_{m,k}v_k ) \rangle & \textrm{if $1 \leq l \leq r$,} \\
 \langle v_l +x_{l,i} v_i, \oF (\sum_{k'=i}^{2i-1} x_{m,k'}v_{k'} ) \rangle & \textrm{if $s+1 \leq l \leq i-1$}
 \end{cases} \notag \\
 &+
 \begin{cases}
 0 & \textrm{if $m=1$,}\\
 x_{l,2i+1-m} & \textrm{if $2 \leq m \leq s$,}\\
 x_{l,2i-m} & \textrm{if $s+1 \leq m \leq i-1$} 
 \end{cases} =0 \label{eq:relx} 
\end{align}
for $1 \leq l,m \leq i-1$. 

We put 
\[
 y=\det (x_{l,j})_{1 \leq l \leq s,\, 2i-s \leq j \leq 2i-1}. 
\]
We want to show that the quotient of 
$k[[x_{l,j}]]_{1 \leq l \leq i-1,\, i \leq j \leq 2i-1}$ 
by the relation \eqref{eq:relx} is nonzero after inverting $y$.

\begin{prop}\label{prop:inti2}
\begin{enumerate}
\item\label{en:i2clint}
The intersection 
\[
 p_0'^{-1}(X_{\mu^*}^{\bfb_{1},x_0}(\tau_{1}^*)) \cap 
\ol{p_0'^{-1}(\mathring{X}_{\mu^*}^{\bfb_2,x_0}(\tau_2^*))} 
\]
is equal to the locus defined by 
the condition $\oF^{-1}(\cE_+^{\vee}) = \cE$. 
\item\label{en:12int}
We have an isomorphism 
$X_{\mu^*}^{\bfb_1,x_0}(\tau_1^*) \cap X_{\mu^*}^{\bfb_2,x_0}(\tau_2^*) \simeq X_{I_{n}^1}^{\oF,\oF^3}([1],[1])^{\mathrm{pf}}$ 
given by 
$\cE \mapsto \cE^{\vee}/ \cE^0$. Further, this intersection is irreducible. 
\end{enumerate}
\end{prop}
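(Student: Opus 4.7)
The plan is to prove \ref{en:i2clint} first by combining the preceding proposition with a rank count that is special to $i=2$, and then to derive \ref{en:12int} by translating the two rigid conditions into Frobenius-twisted orthogonality conditions on the hyperplane $\cE/\varpi\cE^0$.

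For the forward inclusion in \ref{en:i2clint}, the preceding proposition applied with $s=1$ (so that $i-s=1$ matches $\bfb_1$) already gives $F^{-1}(\cE_+^\vee)\subset\cE$; since $i=2$, both $F^{-1}(\cE_+^\vee)/\cE_-$ and $\cE/\cE_-$ are sub-bundles of rank $i-1=1$ of the rank-$3$ bundle $\cE_+/\cE_-$, so a containment of rank-one sub-bundles is automatically an equality. For the reverse inclusion, I would specialize the local-coordinate analysis above to $i=2$, $s=1$: each of $\cV_1,\cV_2,\cV_3$ has dimension one, with bases $v_1,v_2,v_3$, and $f(v_1)=x_{1,2}v_2+x_{1,3}v_3$ with $y=x_{1,3}$. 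The system \eqref{eq:relx} reduces to a single equation of total degree $q+1$; substituting $u=x_{1,2}/x_{1,3}$ and dividing by $x_{1,3}^{q-1}$ transforms it to
\[
 u^q x_{1,3}\langle v_3,F(v_2)\rangle + x_{1,3}\langle v_3,F(v_3)\rangle + \langle v_1,F(v_3)\rangle + u x_{1,3}\langle v_2,F(v_3)\rangle = 0,
\]
which on the locus $y\neq 0$ cuts out a nonempty subscheme of $\mathring{X}_{\mu^*}^{\bfb_2,x_0}(\tau_2^*)$ whose specialization at $x_{1,3}=0$ is exactly the $\bfb_1$-boundary characterized by $\langle v_1,F(v_3)\rangle=0$. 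This gives the density required for the reverse inclusion.

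For \ref{en:12int}, the map $\cE\mapsto\cE/\varpi\cE^0$ is well-defined on the intersection because every point of $X_{\mu^*}^{\bfb_1,x_0}(\tau_1^*)$ satisfies $\varpi\cE^0\subset\cE\subset\cE^0$ of codimension one by Proposition \ref{prop:nu1}, so $\cE/\varpi\cE^0$ is a hyperplane of $\cE^0/\varpi\cE^0\cong\bF_{q^2}^n$, giving a point of $\Par_{I_n^1}(\bfG)^{\pf}$. The $\bfb_1$-condition $\varpi F(\cE^\vee)\subset\cE$ rewrites as the orthogonality of $\cE/\varpi\cE^0$ against its $\oF$-twist under the hermitian form, which is precisely the $[1]$-relative position cutting out $X_{I_n^1}^{\oF}([1])^{\pf}$. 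The extra equality $F^{-1}(\cE_+^\vee)=\cE$ from \ref{en:i2clint} rigidifies $(\cE_+,\cE_-)$, and the boundary condition $\langle v_1,F(v_3)\rangle=0$ identified in \ref{en:i2clint} unwinds, via the unitary Frobenius formula $F\colon\cE\mapsto\oF(\cE^\vee)$ after \eqref{eq:GrnGm}, into an analogous orthogonality with respect to the $\oF^3$-twist; together these describe exactly $X_{I_n^1}^{\oF,\oF^3}([1],[1])^{\pf}$. For the inverse, given such a hyperplane I would reconstruct $\cE$ as the unique lattice between $\varpi\cE^0$ and $\cE^0$ with the prescribed reduction and then recover $(\cE_+,\cE_-)$ by the rigidification above.

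The hard part will be the density step in \ref{en:i2clint}: I would need to verify that the single equation above is genuinely nonconstant in $u$ at a dense set of base points, which amounts to checking that the pairings $\langle v_j,F(v_k)\rangle$ are generically nonvanishing — a consequence of the nondegeneracy of the hermitian form. A secondary bookkeeping task is verifying the precise emergence of $\oF^3$ in \ref{en:12int}: using $F^2=\oF^2$ on lattices, the condition $F^{-1}(\cE_+^\vee)=\cE$ propagates through three applications of Frobenius combined with duality to yield an $\oF^3$-orthogonality on the line side, matching the Frobenius-twist structure of $X_{I_n^1}^{\oF,\oF^3}([1],[1])^{\pf}$.
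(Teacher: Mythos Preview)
Your approach tracks the paper's closely for the forward inclusion in \ref{en:i2clint} and for the overall structure, but the density step for the reverse inclusion has a gap. After your substitution $u=x_{1,2}/x_{1,3}$, you say the ``specialization at $x_{1,3}=0$ is exactly the $\bfb_1$-boundary characterized by $\langle v_1,F(v_3)\rangle=0$.'' But $\langle v_1,F(v_3)\rangle$ is a constant determined by the fixed basepoint $(\cE,\cE_+,\cE_-)$, not a function of the deformation coordinates; setting $x_{1,3}=0$ in your transformed equation therefore gives either a tautology or an inconsistency, neither of which produces the limiting family you want. The paper avoids this by working directly in the completed local ring $k[[x_{1,2},x_{1,3}]]$ modulo the relation: if the localization at $x_{1,3}$ were zero, some power $x_{1,3}^N$ would lie in the ideal generated by the relation, which is impossible because the relation contains the monomial $x_{1,3}x_{1,2}^q\langle v_3,F(v_2)\rangle$ with $\langle v_3,F(v_2)\rangle\neq 0$. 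The non-nilpotency of $x_{1,3}$ then gives that the open stratum accumulates at the basepoint. You correctly flag $\langle v_3,F(v_2)\rangle\neq 0$ as the key input, but your attribution to ``nondegeneracy of the hermitian form'' is imprecise: the paper derives it from the specific choice $v_2\notin F^{-1}(\cE_+^\vee)/\cE_-$, and that is what you should cite. Your change of variables can be salvaged (for instance by sending $u\to\infty$ along solutions and checking that $(x_{1,2},x_{1,3})\to(0,0)$), but as written the specialization step does not go through.

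For \ref{en:12int} the paper simply records that it follows from \ref{en:i2clint}; your more detailed translation of the $\oF$- and $\oF^3$-orthogonality conditions is a reasonable expansion of that, though your link to ``$\langle v_1,F(v_3)\rangle=0$'' inherits the same confusion as above and should be replaced by a direct unwinding of the equality $F^{-1}(\cE_+^\vee)=\cE$.
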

\begin{proof}
In this case, 
\eqref{eq:relx} becomes 
\[
 \langle x_{1,3} v_3, \oF (x_{1,2} v_2 +x_{1,3} v_3) \rangle + 
 \langle v_1 +x_{1,2} v_2, \oF ( x_{1,3} v_3 ) 
 \rangle =0. 
\]
If the quotient of 
$k[[x_{1,2},x_{1,3}]]$ 
by this relation is zero after inverting $x_{1,3}$, 
there is a positive integer $N$ such that 
$x_{1,3}^N$ is divisible by 
\[
 \langle x_{1,3} v_3, \oF (x_{1,2} v_2 +x_{1,3} v_3) \rangle + 
 \langle v_1 +x_{1,2} v_2, \oF ( x_{1,3} v_3 ) 
 \rangle 
\]
in $k[[x_{1,2},x_{1,3}]]$. 
This does not happen because 
$\langle v_3, \oF (v_2 ) \rangle \neq 0$, which follows from $v_2 \notin \oF^{-1}(\cE_+^{\vee})/\cE_-$. 
Hence we have \ref{en:i2clint}. 
The claim \ref{en:12int} follows from 
\ref{en:i2clint} and Lemma \ref{lem:irred}, since $X_{I_{n}^1}^{\oF,\oF^2,\oF^3}([1],\leq [s_1],[1])=X_{I_{n}^1}^{\oF,\oF^3}([1],[1])$. 
\end{proof}

By Proposition \ref{prop:inti2}, 
$X_{\mu^*}^{\bfb_1,x_0}(\tau_1^*) \cap X_{\mu^*}^{\bfb_2,x_0}(\tau_2^*)$ 
is isomorphic to the perfect closed subscheme of $(\bP^{n-1})^{\mathrm{pf}}$ defined by two equations 
\[
 \sum_{i=1}^n x_i x_{n+1-i}^q=0, \quad 
 \sum_{i=1}^n x_i x_{n+1-i}^{q^3}=0. 
\]
Since all non-degenerate hermitian forms on $\bF_{q^2}^n$ 
are isomorphic, the above scheme is 
isomorphic to 
the perfect closed subscheme of $(\bP^{n-1})^{\mathrm{pf}}$ defined by two equations 
\[
 \sum_{i=1}^n x_i^{q+1}=0, \quad 
 \sum_{i=1}^n x_i^{q^3+1}=0. 
\]

\subsection{Intersection of components for $\nu_i$ and $\nu_r$ when $n$ is even.} 

Assume that $n$ is even in this subsection. 
We put $g_r=\varpi^{\delta_{\lambda_r} + \nu_r^*}=\varpi^{-(\varepsilon_{r+1} + \cdots + \varepsilon_n )}$, 
$\Lambda_{x,r}=g_r \Lambda_x$ and $\cE_{x,r}=g_r \cE_x$. 

\begin{prop}\label{prop:intnurxx'}
Let $i \neq r$. Assume that  
$\mathring{X}_{\mu^*}^{\bfb_i,x}(\tau_i^*) \cap X_{\mu^*}^{\bfb_r,x'}(\tau_r^*)$ 
is non-empty. 
Then we have 
$\Lambda_{x'} \subset \Lambda_{x,r}$,   
$\varpi \Lambda_{x,r} \subset \varpi^{-1} \Lambda_{x'}$ and 
\begin{equation}\label{eq:ExrEx'+r}
\length (\Lambda_{x,r}/\Lambda_{x,r} \cap \Lambda_{x'})=\length (\Lambda_{x'}/\Lambda_{x,r} \cap \Lambda_{x'}) +r.
\end{equation} 
Let $\bfP_{x,r,x'}$ be the parabolic subgroup of $\bfG$ that is the stabilizer of the filtration 
\[
 \cE_{x'} \subset \varpi \cE_{x,r} +\cE_{x'} \subset 
 \cE_{x,r} \cap \varpi^{-1} \cE_{x'} \subset \varpi^{-1} \cE_{x'}. 
\]
We put $j_1=\length ((\varpi \Lambda_{x,r} +\Lambda_{x'})/\Lambda_{x'})$, $j_2=\length ((\Lambda_{x,r} \cap \varpi^{-1} \Lambda_{x'})/\Lambda_{x'})$ and 
define $w_r \in S_n$ by 
\[
 w_r (j)=
 \begin{cases}
j+i-1 & \textrm{if $j_1 +1 \leq j \leq j_2$,}\\
j-j_2 -i +r & \textrm{if $j_2+1 \leq j \leq j_2+i -1$,}\\
j & \textrm{otherwise.}
 \end{cases}
\]
Then we have 
\[
 \mathring{X}_{\mu^*}^{\bfb_i,x}(\tau_i^*) \cap X_{\mu^*}^{\bfb_r,x'}(\tau_r^*) = X_{\mu^*}^{\bfb_r,x'}(\tau_r^*)_{\bfP_{x,r,x'},[w_r]}. 
\]
\end{prop}
\begin{proof}
By Proposition \ref{prop:nur}, 
$X_{\mu^*}^{\bfb_r,x'}(\tau_r^*)$ is parametrized by $\cE \dashrightarrow \cE_{x'}$ 
bounded by $\nu_r^*$ such that $\varpi \cE \subset \oF (\cE^{\vee})$. 
By the identification \eqref{eq:ttbiso}, the subscheme 
$\mathring{X}_{\mu^*}^{\bfb_i,x}(\tau_i^*) \cap X_{\mu^*}^{\bfb_r,x'}(\tau_r^*) \subset X_{\mu^*}^{\bfb_r,x'}(\tau_r^*)$ is given by the conditions 
\begin{equation}\label{eq:EExr}
\length ((\cE +\cE_{x,r})/\cE_{x,r})=i-1, \quad 
\length (\cE_{x,r}/\cE \cap \cE_{x,r})=i 
\end{equation}
and $\varpi \cE_{x,r} \subset \cE \subset \varpi^{-1} \cE_{x,r}$. 
Let $\cE \in \mathring{X}_{\mu^*}^{\bfb_i,x}(\tau_i^*) \cap X_{\mu^*}^{\bfb_r,x'}(\tau_r^*)$. 
Since $\varpi \cE_{x,r} \subset \cE$, we have 
$\oF (\cE^{\vee}) \subset \cE_{x,r}$. Hence $\cE_{x'} \subset \cE \subset \oF (\cE^{\vee}) \subset \cE_{x,r}$. 
We also have $\varpi \cE_{x,r} \subset \cE \subset \varpi^{-1} \cE_{x'}$. 
By the equality 
\begin{align*}
\length & ((\cE+\cE_{x,r})/\cE_{x,r})+\length (\cE_{x,r}/\cE_{x,r} \cap \cE_{x'}) \\ 
 &= 
\length ((\cE+\cE_{x,r})/\cE) +\length (\cE /\cE_{x'}) + \length (\cE_{x'}/\cE_{x,r} \cap \cE_{x'}) , 
\end{align*}
$\length (\cE /\cE_{x'})=r-1$ and \eqref{eq:EExr}, we have 
\eqref{eq:ExrEx'+r}. 

Since we have \eqref{eq:ExrEx'+r}, by the above argument, for any $\cE$ parametrizing $X_{\mu^*}^{\bfb_r,x'}(\tau_r^*)$ the condition 
\eqref{eq:EExr} holds if and only if $\length ((\cE +\cE_{x,r})/\cE_{x,r})=i-1$, 
which is equivalent to 
$\length ((\cE +(\cE_{x,r}\cap \varpi^{-1} \cE_{x'}))/(\cE_{x,r}\cap \varpi^{-1} \cE_{x'}))=i-1$. 
Therefore the subscheme 
$\mathring{X}_{\mu^*}^{\bfb_i,x}(\tau_i^*) \cap X_{\mu^*}^{\bfb_r,x'}(\tau_r^*) \subset X_{\mu^*}^{\bfb_r,x'}(\tau_r^*)$ is given by the conditions 
$\length ((\cE +(\cE_{x,r}\cap \varpi^{-1} \cE_{x'}))/(\cE_{x,r}\cap \varpi^{-1} \cE_{x'}))=i-1$ and $\varpi \cE_{x,r} +\cE_{x'} \subset \cE$. This implies the claim. 
\end{proof}

Assume that $x \neq x'$.

\begin{prop}\label{prop:intneqrr}
Assume that $X_{\mu^*}^{\bfb_r,x}(\tau_r^*) \cap X_{\mu^*}^{\bfb_r,x'}(\tau_r^*)$ is not empty. 
Then we have $l_{x,x'} \leq r-1$ and 
$\varpi \Lambda_x \subset \Lambda_{x'}$. 
The intersection $X_{\mu^*}^{\bfb_r,x}(\tau_r^*) \cap X_{\mu^*}^{\bfb_r,x'}(\tau_r^*)$ is parametrized by $\cE \dashrightarrow \cE_x$ bounded by 
$\nu_r^*$ such that 
$\varpi \cE \subset \oF (\cE^{\vee})$ and $\cE \dashrightarrow \cE_{x'}$ is also bounded by 
$\nu_r^*$. In particular, it is isomorphic to
$$\left\{ H \in \mathrm{Gr}^{\pf}(r-1-l_{x,x'}, \varpi^{-1} (\Lambda_x \cap \Lambda_{x'}) / (\Lambda_x + \Lambda_{x'}) ) \mid H \subset \mathrm{Frob}(H^\perp) \right\}.$$
\end{prop}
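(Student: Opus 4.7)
The plan is to use Proposition \ref{prop:nur} as the starting point. Translating that description by an element $g \in J_{\tau}(F)$ with $[g]=x$, I get that $X_{\mu^*}^{\bfb_r,x}(\tau_r^*)$ parametrizes modifications $\cE \dashrightarrow \cE_x$ bounded by $\nu_r^*$ such that $\varpi \cE \subset F(\cE^{\vee})$, and similarly for $x'$. Since in this minuscule case $X_{\mu^*}^{\bfb_r,x}(\tau_r^*) = \mathring{X}_{\mu^*}^{\bfb_r,x}(\tau_r^*)$, the intersection is just parametrized by $\cE$ satisfying both modification conditions simultaneously together with $\varpi \cE \subset F(\cE^{\vee})$. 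Unwinding the meaning of $\nu_r^*$ as in the proof of Proposition \ref{prop:nur}, the first two conditions say precisely that $\cE_x \subset \cE \subset \varpi^{-1}\cE_x$ with $\length(\cE/\cE_x)=r-1$, and analogously for $x'$.

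Next, I deduce the numerical constraints. The combined inclusions force $\cE_x+\cE_{x'} \subset \cE \subset \varpi^{-1}(\cE_x \cap \cE_{x'})$, so non-emptiness of the intersection requires $\varpi(\cE_x+\cE_{x'}) \subset \cE_x \cap \cE_{x'}$, which is equivalent to $\varpi \Lambda_x \subset \Lambda_{x'}$ (the equivalence with $\varpi \Lambda_{x'} \subset \Lambda_x$ coming from hermitian self-duality of $\Lambda_x,\Lambda_{x'}$, as remarked at the start of \S \ref{sec:Int}). The inclusion $\cE_{x'} \subset \cE$ combined with $\length(\cE/\cE_x)=r-1$ gives $l_{x,x'}=\length((\cE_x+\cE_{x'})/\cE_x) \leq r-1$, and $x \neq x'$ gives $l_{x,x'} \geq 1$.

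The last step is to identify the intersection with the prescribed Grassmannian locus. Fix the ambient vector space $V = \varpi^{-1}(\Lambda_x \cap \Lambda_{x'})/(\Lambda_x + \Lambda_{x'})$, equipped with the hermitian pairing induced from the one on $\Lambda$; this pairing is non-degenerate because $(\Lambda_x+\Lambda_{x'})^{\vee}=\Lambda_x \cap \Lambda_{x'}$ and $(\varpi^{-1}(\Lambda_x \cap \Lambda_{x'}))^{\vee}=\varpi(\Lambda_x + \Lambda_{x'})$ by self-duality of $\Lambda_x$ and $\Lambda_{x'}$. For a point $\cE$ of the intersection, set $H = \cE/(\cE_x+\cE_{x'})$; a length computation using $\length(\cE/\cE_x)=r-1$ and $\length((\cE_x+\cE_{x'})/\cE_x)=l_{x,x'}$ shows $\dim H = r-1-l_{x,x'}$, while $\dim V = n - 2l_{x,x'}$, so $H$ lies in $\Gr^{\pf}(r-1-l_{x,x'}, V)$. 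The inclusions $\cE_x+\cE_{x'} \subset \cE \subset \varpi^{-1}(\cE_x \cap \cE_{x'})$ dualize to $\varpi(\cE_x+\cE_{x'}) \subset \cE^{\vee} \subset \cE_x \cap \cE_{x'}$, and a direct check identifies $H^{\perp} = \varpi^{-1}\cE^{\vee}/(\cE_x+\cE_{x'})$ inside $V$.

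Finally I translate the Frobenius condition: $\varpi \cE \subset F(\cE^{\vee})$ is equivalent to $\cE \subset F(\varpi^{-1}\cE^{\vee})$, which under the quotient becomes $H \subset F(H^{\perp})$. Conversely, any such $H$ lifts to a unique $\cE$ satisfying all the conditions, and this assignment is a scheme-theoretic isomorphism since all conditions are linear in the universal family. The main technical point to verify carefully is the duality computation identifying $H^{\perp}$ with $\varpi^{-1}\cE^{\vee}/(\cE_x+\cE_{x'})$, which is where the hermitian self-duality of $\Lambda_x$ and $\Lambda_{x'}$ enters crucially; everything else is bookkeeping with lattice lengths.
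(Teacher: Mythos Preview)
Your proposal is correct and follows essentially the same approach as the paper: both arguments translate Proposition \ref{prop:nur} to $x$ and $x'$, extract the chain $\cE_x+\cE_{x'}\subset\cE\subset\varpi^{-1}(\cE_x\cap\cE_{x'})$, derive the bounds on $l_{x,x'}$ by a length count, and realize the isomorphism via $\cE\mapsto\cE/(\cE_x+\cE_{x'})$. Your write-up is in fact more detailed than the paper's, which leaves the identification $H^{\perp}=\varpi^{-1}\cE^{\vee}/(\cE_x+\cE_{x'})$ and the translation of $\varpi\cE\subset F(\cE^{\vee})$ into $H\subset\mathrm{Frob}(H^{\perp})$ entirely implicit.
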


\begin{proof}

Assume that $\cE$ is a point of $X_{\mu^*}^{\bfb_r,x}(\tau_r^*) \cap X_{\mu^*}^{\bfb_r,x'}(\tau_r^*)$. Since $\varpi \cE \subset \oF (\cE^{\vee})$ and both $\cE \dashrightarrow \cE_x$ and $\cE \dashrightarrow \cE_{x'}$ are bounded by $\nu_r^*$, we have the following chain conditions:
\begin{align*}
 &\cE_x \subset \cE \subset  \varpi^{-1} \oF (\cE^{\vee}) \subset \varpi^{-1} \cE_x , \\ 
  & \cE_{x'} \subset \cE \subset  \varpi^{-1} \oF (\cE^{\vee}) \subset \varpi^{-1} \cE_{x'}.   
\end{align*}
The inclusion follows from 
$\cE_x \subset \cE \subset \varpi^{-1} \cE_{x'}$. 
Note that $\length ( \varpi^{-1} \oF (\cE^{\vee})  / \cE  )=2$, while both $\length (\cE  / \cE_x )$ and $\length ( \cE  / \cE_{x'} )$ are $r-1$.
Then $\cE_x \cap \cE_{x'}$ and $\cE$ are related by
\[ 
 \cE_x + \cE_{x'} \subset \cE \subset  \varpi^{-1} \oF (\cE)  \subset \varpi^{-1} (\cE_x \cap \cE_{x'}) . 
\]
Since $l_{x,x'} = \length ( (\cE_x + \cE_{x'})/ \cE_x )$, 
we have
$$l_{x,x'} = r -1 - \length(\cE/(\cE_x + \cE_{x'} )).$$
Hence we have $l_{x,x'} \leq r-1$. 

The isomorphism in the claim is given by sending $\cE$ to $\cE/ (\cE_x + \cE_{x'}) \subset  \varpi^{-1} (\cE_x \cap \cE_{x'}) /  (\cE_x + \cE_{x'})$. 
\end{proof}

\section{Example}\label{sec:Exa}
In this section, we study in details the case where $n=6$. 
We identify the moduli parametrizing 
modification $\cE \subset \cE_x$ bounded by $\nu_1^*$ 
with $(\bP^5)^{\mathrm{pf}}$ 
by taking a basis of $\Lambda_x$ such that 
the Hermitian pairing is the standard one. 
Let $\bP_{x,x',+}$ be the projective subspace of 
$(\bP^5)^{\mathrm{pf}}$ defined by the condition 
$\varpi \cE_{x,x'}^+ \subset \cE$. 
Let $\bP_{x,x',-}$ be the projective subspace of 
$(\bP^5)^{\mathrm{pf}}$ defined by the condition 
$\cE_{x,x'}^- \subset \cE$. 
We note that 
$\bP_{x,x',+}$ and $\bP_{x,x',-}$ are 
isomorphic to 
$(\bP^{5-d_2})^{\mathrm{pf}}$ 
and 
$(\bP^{d_2 -1})^{\mathrm{pf}}$ respectively. 

In the following, we freely use Proposition \ref{prop:inter} to determine the range of $j_1$ and $j_2$. 

\subsection{Intersection of components for $\nu_1$}
We may assume that $x \neq x'$. 
The intersection is not empty only if 
$l_{x,x'}=1$, since if $\cE$ is in the intersection we have $\cE \subset \cE_x \cap \cE_{x'}$ and $\cE \subset \cE_x$ is bounded by $\nu_1^*$. 
In this case, $d_1=0$, $d_2=1$, $j_1=j_2=0$ by \eqref{eq:d12l} and $d_1 \leq d_2$. 
The intersection is $\bP_{x,x',-}$, which is a point given by 
$\cE_x \cap \cE_{x'}$. 

\subsection{Intersection of components for $\nu_1$ and $\nu_2$}
If $x=x'$, then the intersection is 
isomorphic to the perfect closed subscheme of $(\bP^5)^{\mathrm{pf}}$ defined by two equations 
\[
 \sum_{i=1}^6 x_i x_{7-i}^q=0, \quad 
 \sum_{i=1}^6 x_i x_{7-i}^{q^3}=0 
\]
by Proposition \ref{prop:inti2}. 

We assume that $x\neq x'$. 
We can check claims in \S \ref{sssec:d01} and \S \ref{sssec:d02} using Proposition \ref{prop:inter}. Especially, the conditions \eqref{eq:lengE-} and \eqref{eq:lengsum} are automatically satisfied in these cases. 

\subsubsection{$d_1=0$, $d_2=1$}\label{sssec:d01}
In this case, $j_1=0$ and $j_2=1$. 
The intersection is equal to 
the perfect closed subscheme of $\bP_{x,x',+}$ defined by equation 
\[
 \sum_{i=1}^5 x_i x_{6-i}^q=0.  
\]

\subsubsection{$d_1=0$, $d_2=2$}\label{sssec:d02}
In this case, $j_1=0$ and $j_2=1$. 
The intersection is 
$\bP_{x,x',-}$, which is isomorphic to 
$(\bP^1)^{\mathrm{pf}}$. 

\begin{rem}
If $d_1=d_2=1$, then there is no $j_2 \in \bN$ satisfying the condition in Proposition 
\ref{prop:inter}. 
\end{rem}

\subsection{Intersection of components for $\nu_2$}
Let 
$(\cE_+,\cE_-)$ be a point of 
$X_{I_{6}^{1,4}}([1])^{\pf}$. 
The hermitian pairing on $V$ induces a pairing on 
$\cE_+/\cE_-$ 
since we have $\cE_+ \subset \oF (\cE_-^{\vee})$ 
and 
$\cE_- \subset \oF (\cE_+^{\vee})$. 
We take 
a basis $v_1, v_2, v_3$ of 
$\cE_+/\cE_-$ such that 
$v_1 \in \oF^{-1}(\cE_+^{\vee})/\cE_-$, 
$v_2 \in \cE_x/\cE_-$. 
Let $\cE$ be a point of 
$\mathring{X}_{\mu^*}^{\bfb_2,x}(\tau_2^*)$ 
in the fiber of $(\cE_+,\cE_-)$ under 
\[
 \pi \colon 
 \mathring{X}_{\mu^*}^{\bfb_2,x}(\tau_2^*)
 \to X_{I_{6}^{1,4}}([1])^{\pf} . 
\]
We can take a generator  
$v=x_1 v_1 +x_2 v_2 +v_3$ 
of $\cE/\cE_-$ 
for $x_1,x_2 \in k$, since 
$\cE \not\subset \cE_x$. 
Then we have 
\begin{equation*}
  \langle v, \oF (v)\rangle = 
  x_1 
 \langle v_1,\oF (v_3) \rangle + 
 x_2  
 \langle v_2,\oF (v_3) \rangle +
 x_2^q 
 \langle v_3,\oF (v_2) \rangle +
  \langle v_3,\oF (v_3) \rangle 
\end{equation*}
because 
$\langle w,\oF (w') \rangle =0$ 
for $w,w' \in \cE_x/\cE_-$ and 
$\langle v_3,\oF (v_1) \rangle =0$. 
Hence the fiber of $(\cE_+,\cE_-)$ under 
$\pi$ is defined by 
\begin{equation}\label{eq:Efibdefcond}
  x_1 
 \langle v_1,\oF (v_3) \rangle + 
 x_2  
 \langle v_2,\oF (v_3) \rangle +
 x_2^q 
 \langle v_3,\oF (v_2) \rangle +
  \langle v_3,\oF (v_3) \rangle =0 
\end{equation}
in $(\bA^2)^{\pf}$. 
We note that 
$(\langle v_1,\oF (v_3) \rangle,   
 \langle v_2,\oF (v_3) \rangle) \neq (0,0)$ 
 because $v_3 \notin \cE_x/\cE_-$. 

We describe the fiber of 
\[
 \pi_{j_1,j_2} \colon 
 \mathring{X}_{\mu^*}^{\bfb_2,x}(\tau_2^*)_{\bfP_{x,x'},[w_{j_1,j_2}]} 
 \cap \Par_{t_{j_1,j_2}} 
(\sG_{j_1,j_2}; \sP_{j_1,j_2})_{r_{j_1,j_2}}^{\pf} 
 \to X_{I_{6}^{1,4}}([1])_{\bfP_{x,x'},[w_{j_1,j_2}]}^{\pf} 
\]
and determine its dimension 
when 
\[
  \left( \mathring{X}_{\mu^*}^{\bfb_2,x}(\tau_2^*) \cap \mathring{X}_{\mu^*}^{\bfb_{2},x'}(\tau_{2}^*)  \right)_{\bfP_{x,x'},[w_{j_1,j_2}]}  
\]
is not empty. 
We recall that $\length (\cE_{+,-}/\cE_-)=d_{j_1,j_2}$ by Lemma 
\ref{lem:lengE+-}. 
We note that if $\cE \subset \cE_{+,-}$ the condition 
\eqref{eq:Efibdefcond} is automatic by 
\eqref{eq:incE+-}. 
In the following $5$ cases, the condition on the relation between $\cE$ and $\cE_{+,-}$ follows from  \eqref{eq:lengsum}. 

\subsubsection{$d_1=0$, $d_2=1$}
In this case, $0 \leq j_1 \leq 1$ and $j_2=0$. 
We have $d_{j_1,j_2}=2-j_1$. 
The fiber of $\pi_{j_1,0}$ is given by the condition 
$\cE \not\subset \cE_{+,-}$, where
the dimension of the fiber is $1$. 

\subsubsection{$d_1=0$, $d_2=2$}
In this case, $0 \leq j_1 \leq 1$ and $0 \leq j_2 \leq 1$. 
We have $d_{j_1,j_2}=1-j_1+j_2$. 
The fiber of $\pi_{j_1,0}$ is given by the condition 
$\cE \not\subset \cE_{+,-}$, where
the dimension of the fiber is $1$. 
The fiber of $\pi_{j_1,1}$ is given by the condition 
$\cE \subset \cE_{+,-}$, where 
the dimension of the fiber is $1-j_1$ since 
$\length (\cE_{+,-}/\cE_-)=2-j_1$. 

\subsubsection{$d_1=1$, $d_2=1$} 
In this case, $j_1=0$ and $j_2=0$. 
We have $d_{j_1,j_2}=2$. 
The fiber of $\pi_{0,0}$ is given by the condition 
$\cE \subset \cE_{+,-}$, where 
the dimension of the fiber is $1$ since 
$\length (\cE_{+,-}/\cE_-)=2$. 

\subsubsection{$d_1=0$, $d_2=3$}
In this case, $j_1 =0$ and $j_2 = 1$. 
We have $d_{j_1,j_2}=1$. 
The fiber of $\pi_{0,1}$ is given by the condition 
$\cE = \cE_{+,-}$, where 
the dimension of the fiber is $0$. 

\subsubsection{$d_1=1$, $d_2=2$}
In this case, $j_1=0$ and $j_2=0$. 
We have $d_{j_1,j_2}=1$. 
The fiber of $\pi_{0,0}$ is given by the condition 
$\cE = \cE_{+,-}$, where 
the dimension of the fiber is $0$. 

\subsection{Intersection of components for $\nu_i$ ($1 \leq i \leq 2$) and $\nu_3$}
In this case, the intersection is given by $X_{\mu^*}^{\bfb_3,x'}(\tau_3^*)_{\bfP_{x,3,x'},[w_3]}$ as Proposition \ref{prop:intnurxx'}, and 
$X_{\mu^*}^{\bfb_3,x'}(\tau_3^*)$ is isomorphic to $X_{I_{6}^{2}}([1])^{\mathrm{pf}}$ by Proposition \ref{prop:nur}. 

\subsection{Intersection of components for $\nu_3$}

In the following two cases, the claims follow from Proposition \ref{prop:intneqrr}. 

\subsubsection{$l_{x,x'}=1$}
The intersection is isomorphic to 
the perfection of the Fermat hypersurface 
defined by 
\[
 x_1 x_4^q + x_2 x_3^q + x_3 x_2^q +x_4 x_1^q =0 
\]
in $\bP^3$. 

\subsubsection{$l_{x,x'}=2$}
The intersection is a point given by $\cE_x +\cE_{x'}$. 

\section{Shimura variety}\label{sec:ShVar}
In this section, we explain how the study of $\mathring{X}_{\mu^*}^{\bfb_i,x_0}(\tau_i^*)$ is related to the supersingular locus of a Shimura variety, recalling previously known results. 

Let $E$ be a quadratic imaginary field, and let $\sfV$ be an $n$-dimensional Hermitian space over $E$ with signature $(2,n-2)$ at infinity. Fix a prime $p \neq 2$ inert in $E$. 
Further assume that $\sfV \otimes_{E} \bQ_{p^2}$ contains a self-dual $\bZ_{p^2}$ lattice $\Lambda$. Let $\sfG = \GU (\sfV)$ be the general associated unitary group. 
We put $G=\GU (\Lambda)$ as before. 

We take a basis of 
$\sfV_{\bC} = \sfV \otimes_{E} \bC$ 
over $\bC$ 
such that the Hermitian form is given by the 
matrix 
$\diag (1_2,-1_{n-2})$. 
Let $h \colon \Res_{\bC/\bR} {\Gm}_{\bC} \to \sfG_{\bR}$ be the morphism of algebraic groups over $\bR$ such that 
$h(z)$ corresponds to $\diag (z \cdot 1_2,\bar{z} \cdot 1_{n-2})$ for $z \in \bC^{\times}$ 
under 
\[
 \sfG (\bR) \subset \Aut_{\bC} (\sfV_{\bC}) \simeq 
 \GL_n(\bC) , 
\]
where the last isomorphism is given by 
the basis taken above. 
Let $\sfX$ be the $\sfG (\bR)$-conjugacy class of 
$h$. 
Then $(\sfG,\sfX)$ is a Shimura datum. 

We have an isomorphism 
\[
 (\Res_{\bC/\bR} {\Gm}_{\bC})_{\bC} 
 \simeq {\Gm}_{\bC} \times {\Gm}_{\bC} 
\]
of algebraic groups over $\bC$ induced by the isomorphism 
$\bC \otimes_{\bR} \bC \simeq \bC \times \bC;\ a \otimes b \mapsto (ab,\bar{a}b)$. 
We define $\mu_h$ by the composition 
\[
 {\Gm}_{\bC} \hookrightarrow 
 {\Gm}_{\bC} \times {\Gm}_{\bC} 
 \simeq (\Res_{\bC/\bR} {\Gm}_{\bC})_{\bC} \xra{h_{\bC}} 
 \sfG_{\bC} , 
\]
where the first morphism is the inclusion into the first factor. 
Let $\mu \colon {\Gm}_E \to \sfG_E$ 
be the morphism of algebraic 
over $E$ such that 
$\mu(z)$ corresponds to 
$(\diag (z \cdot 1_2, 1_{n-2}),z)$ 
for $z \in E^{\times}$ 
under the isomorphism 
\[
 \sfG_E \simeq \GL_n (E) \times {\Gm}_E 
\]
given by taking a basis of $\sfV$ over $E$. 
Then $\mu_h$ and $\mu_{\bC}$ 
are in the same $\sfG (\bC)$-conjugacy class. 
We note that the reflex field 
$E(\sfG,\sfX)$ of $(\sfG,\sfX)$ is 
$E$ if $n \neq 4$ and $\bQ$ if $n=4$. 

Let $K^p \subset \sfG (\Af^p )$ be a sufficiently small open compact subgroup. 
Let $K_p \subset \sfG (\bQ_p )$ be a hyperspecial subgroup. 
We put $K=K^p K_p \subset \sfG (\Af )$. 
Let $\Sh_K (\sfG,\sfX)$ be the canonical model over 
$E(\sfG,\sfX)$ of the Shimura variety attached to 
$(\sfG,\sfX)$ and $K$. 
Let $\sS_K (\sfG,\sfX)$ be the canonical integral model of 
$\Sh_K (\sfG,\sfX)$ over 
$\cO_{E(\sfG,\sfX),(p)}$ constructed in 
\cite{KisIntShab}. 

Let $\bfS_{K}(\sfG,\sfX)$ be the perfection of 
$\sS_K (\sfG,\sfX) \otimes \ol{\bF}_p$. 
We have the Newton map 
\[
 \cN \colon \bfS_{K}(\sfG,\sfX) (\ol{\bF}_p) \to B(G,\mu^* )  
\]
as in \cite[7.2.7]{XiZhCycSh}. 
Let $[b] \in B(G,\mu^* )$ be the basic element. 
We write $\bfS_{K}(\sfG,\sfX)_{[b]}$ for the 
closed perfect subscheme of $\bfS_{K}(\sfG,\sfX)$ 
defined by $\cN^{-1}([b])$. 
We call $\bfS_{K}(\sfG,\sfX)_{[b]}$ the supersingular locus of 
$\bfS_{K}(\sfG,\sfX)$. 

\begin{rem}
In \cite{KotPointSh}, a moduli space of abelian schemes with additional structures is constructed. 
It is isomorphic to 
a finite union of integral models of Shimura varieties. 
Under the isomorphism, a point of 
$\bfS_{K}(\sfG,\sfX)_{[b]}$ corresponds to 
a supersingular abelian variety. 
\end{rem}

We take a point $x \in \bfS_{K}(\sfG,\sfX)_{[b]}(\ol{\bF}_p)$. 
We put $L=W(\ol{\bF}_p)[\frac{1}{p}]$. 
Then we have a basic element $b_x \in G(L)$ 
and an algebraic group $I_x$ over $\bQ$ 
as in 
\cite[7.2.9]{XiZhCycSh}. 
We have embeddings 
$I_x(\bQ) \subset \sfG (\Af^p)$ and  
$I_x(\bQ) \subset J_{b_x} (\bQ_p)$ 
as in \cite[7.2.13]{XiZhCycSh}. 
Then we have the isomorphism 
\begin{equation}\label{eq:unifss}
 I_x(\bQ) \backslash X_{\mu^*} (b_x) \times \sfG (\Af^p) 
 / K^p \xrightarrow{\sim} \bfS_{K}(\sfG,\sfX)_{[b]} 
\end{equation}
by \cite[Corollary 7.2.16]{XiZhCycSh}.
We use notations in \S \ref{sec:Irr} for $F=\bQ_p$.

\begin{prop}
We have $\dim \bfS_{K}(\sfG,\sfX)_{[b]}=n-2$. 
The irreducible components of 
$\bfS_{K}(\sfG,\sfX)_{[b]}$ are parametrized by 
\[
 \coprod_{1 \leq i \leq r} I_x(\bQ) \backslash (G(\bQ_p)/G(\bZ_p)) \times \sfG (\Af^p) 
 / K^p . 
\]
For sufficiently small $K^p$, a non-empty open subscheme of each irreducible component of $\bfS_{K}(\sfG,\sfX)_{[b]}$ is isomorphic to 
a non-empty open subscheme of $\mathring{X}_{\mu^*}^{\bfb_i,x_0}(\tau_i^*)$ 
for some $i$, which is described in \S \ref{sec:Irr}. 
\end{prop}
\begin{proof}
The first two claims follow from 
Proposition \ref{prop:irralluni} and \eqref{eq:unifss}. 
The last claim is proved in the same way as \cite[Theorem 6.1]{VolGUI}. 
\end{proof}

\noindent
Maria Fox\\
Department of Mathematics, 
Oklahoma State University, 
Stillwater, OK 74078, USA\\
maria.fox@okstate.edu\\

\noindent
Naoki Imai\\
Graduate School of Mathematical Sciences, The University of Tokyo, 
3-8-1 Komaba, Meguro-ku, Tokyo, 153-8914, Japan \\
naoki@ms.u-tokyo.ac.jp 

\end{document}